\newtheorem{definition}{Definition}[section]
\newtheorem{claim}[definition]{Claim}
\newtheorem{proposition}[definition]{Proposition}
\newtheorem{theorem}[definition]{Theorem}
\newtheorem{corollary}[definition]{Corollary}
\newtheorem{lemma}[definition]{Lemma}
\numberwithin{equation}{section}
\newcommand{\comment}[1]{}
\newcommand{\N}{\mathbb N}
\newcommand{\eps}{\epsilon}
\newcommand{\cA}{\mathcal{A}}
\newcommand{\cB}{\mathcal{B}}
\newcommand{\cG}{\mathcal{G}}
\newcommand{\cF}{\mathcal{F}}
\newcommand{\cD}{\mathcal{D}}
\newcommand{\cH}{\mathcal{H}}
\newcommand{\cM}{\mathcal{M}}
\newcommand{\cI}{\mathcal{I}}
\newcommand{\cL}{\mathcal{L}}
\newcommand{\cU}{\mathcal{U}}
\newcommand{\cV}{\mathcal{V}}
\newcommand{\cX}{\mathcal{X}}
\newcommand{\cW}{\mathcal{W}}
\newcommand{\Pro}{\mathbb{P}}
\newcommand{\Exp}{\mathbb{E}}
\newcommand{\EXP}{\mathbb{E}}
\newcommand{\Ir}{{\rm Ir}}
\newcommand{\Aut}{{\rm Aut}}
\renewcommand{\epsilon}{\varepsilon}
\newcommand{\COMMENT}[1]{}
\newcounter{step}
\title{Rainbow structures in locally bounded colourings of graphs}
\author{Jaehoon Kim}
\address{School of Mathematics, University of Birmingham,
Edgbaston, Birmingham, B15 2TT, United Kingdom}
\email{ j.kim.3@bham.ac.uk, d.kuhn@bham.ac.uk, kupavskii@ya.ru, d.osthus@bham.ac.uk}
\author{Daniela K\"uhn}
\author{Andrey Kupavskii}
\author{Deryk Osthus}
\thanks{The research leading to these results was partially supported by the EPSRC, grant no. EP/N019504/1 (D.~K\"uhn and A.~Kupavskii),
and by the Royal Society and the Wolfson Foundation (D.~K\"uhn).
The research was  also partially supported by the European Research Council under the European Union's Seventh Framework Programme (FP/2007--2013) / ERC Grant 306349 (J.~Kim and D.~Osthus). }
\date{\today}
\begin{document}

\begin{abstract} We study approximate decompositions of edge-coloured quasirandom graphs into rainbow spanning structures:
an edge-colouring of a graph is {\it locally $\ell$-bounded} if every vertex is incident to at most $\ell$ edges of each colour, and is {\it (globally) $g$-bounded} if every colour appears at most $g$ times. Our results imply the existence of:
\begin{itemize}
\item[(i)] approximate decompositions of properly edge-coloured $K_n$ into rainbow almost-spanning cycles.
\item[(ii)] approximate decompositions of edge-coloured $K_n$ into rainbow Hamilton cycles, provided that the colouring is $(1-o(1))\frac n2$-bounded and locally $o\big(\frac{n}{\log^4 n}\big)$-bounded.
\item[(iii)] an approximate decomposition into full transversals of any $n\times n$ array, provided each symbol appears $(1-o(1))n$ times in total and only $o\big(\frac{n}{\log^2 n}\big)$ times in each row or column.
\end{itemize}
Apart from the logarithmic factors, these bounds are essentially best possible.
We also prove analogues for rainbow $F$-factors, where $F$ is any fixed graph.
Both (i) and (ii) imply approximate versions of the Brualdi-Hollingsworth conjecture on decompositions into rainbow spanning trees.
\end{abstract}
\maketitle

\section{Introduction and our results}\label{sec1}
\subsection{Transversals and rainbow colourings} For $n\in \N$, let us write $[n]:=\{1,\dots, n\}$. A {\it Latin square} is an $n\times n$ array filled with symbols from $[n]$, so that each symbol appears exactly once in each row and each column. A {\it partial transversal} of a Latin square is a subset of its entries, each in a distinct row and column, and having distinct symbols. A partial transversal of size $n$ is a {\it full transversal}.

The study of Latin squares goes back to Euler, who was, in particular, interested in finding Latin squares decomposable into full transversals. It is however not  obvious whether {\it any} Latin square should have a large transversal. Ryser~\cite{Ry67}, Stein~\cite{St75} and Brualdi~\cite{BR91} conjectured that any given Latin square has a partial transversal of size $n-1$ (it need not have a full one if $n$ is even). The current record towards this problem is due to Hatami and Shor \cite{HS08}, who, correcting a mistake in an earlier work of Shor~\cite{Sh82}, proved that there always exists a partial transversal of size $n-O(\log^2 n)$.

Clearly, each symbol appears in a Latin square exactly $n$ times. A more general conjecture was made by Stein~\cite{St75}, who suggested that any $n\times n$ array filled with symbols from $[n]$, each appearing exactly $n$ times, has a partial transversal of size $n-1$. The best known positive result in this direction is due to Aharoni, Berger, Kotlar and Ziv \cite{ABKZ17}, who, using a topological approach, showed that any such array has a partial transversal of size at least $2n/3$.
On the other hand, Pokrovskiy and Sudakov \cite{PS17a} recently disproved Stein's conjecture: in fact, they showed that there are such arrays with largest transversal of size $n-\Omega(\log n)$.

Each $n\times n$ array filled with symbols may be viewed as a colouring of a complete bipartite graph $K_{n,n}$: an edge $ij$ corresponds to the entry of the array in the $i$-th row and $j$-th column, and each symbol stands for a colour. In this way, a Latin square corresponds to a properly edge-coloured $K_{n,n}$, and a partial transversal is a {\it rainbow matching} in $K_{n,n}$, that is, a collection of disjoint edges having pairwise distinct colours. Thus, the conjecture of Stein deals with (globally) $n$-bounded colourings of $K_{n,n}$, where we say that an edge-colouring of a graph is {\it (globally) $g$-bounded} if each colour appears at most $g$ times in the colouring. An edge-colouring is {\it locally $\ell$-bounded} if each colour appears at most $\ell$ times at any given vertex. Note that locally $1$-bounded colourings are simply proper colourings.

Studying rainbow substructures in graphs has a long history. One source of inspiration is Ramsey theory, in particular, the canonical version of Ramsey's theorem due to Erd\H os and Rado \cite{ER50}. A general problem is to find conditions on the colourings and graphs which would allow to find certain rainbow substructures. This topic has received considerable attention recently, with probabilistic tools and techniques from extremal graph theory allowing for major progress on longstanding problems. In this context, natural (rainbow) structures to seek include matchings, Hamilton cycles, spanning trees and triangle factors (see e.g.~\cite{AFR95,APS17,CP17, CKPY,GRWW, GJ18,MPS18,P16, PS17b}).  It is easy to see that results on edge-coloured $K_n$ also imply results on patterns in symmetric $n\times  n$ arrays.

\subsection{(Almost) spanning rainbow structures in complete graphs} Andersen~\cite{An89} conjectured that every properly edge-coloured $K_n$ contains a rainbow path of length $n-2$ (which would be best possible by a construction of
Maamoun and Meyniel~\cite{MM84}).
Despite considerable research, even the existence of an almost
spanning path or cycle was a major open question until recently.
Alon, Pokrovskiy and Sudakov \cite{APS17} were able to settle this
by showing that any properly edge-coloured $K_n$ contains a rainbow cycle of length $n-O(n^{3/4})$ (the error term was subsequently improved in \cite{BM17}). A corollary of our second main theorem (Theorem~\ref{thm: near spanning cycle}) states that we can arrive at a stronger conclusion (i.e.~we obtain many edge-disjoint
almost-spanning rainbow cycles) under much weaker assumptions (though with a larger error term).
Note that, similarly to the case of Latin squares, any proper edge-colouring of $K_n$ is $n/2$-bounded.
\begin{corollary}\label{cor1}
  Any  $(1+o(1))n/2$-bounded, locally $o(n)$-bounded edge-colouring of $K_n$ contains $(1-o(1))n/2$ edge-disjoint rainbow cycles of length $(1-o(1))n$.
\end{corollary}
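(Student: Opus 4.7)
The plan is to deduce this corollary directly from Theorem~\ref{thm: near spanning cycle}. The only mismatch with its hypotheses is the mild excess in the global bound: the corollary permits each colour to appear up to $(1+o(1))n/2$ times, whereas a statement about an \emph{approximate decomposition} most naturally takes the dual bound $(1-o(1))n/2$ (since each rainbow cycle can use any given colour at most once, so $(1-o(1))n/2$ cycles can collectively use only $(1-o(1))n/2$ edges of each colour). I would handle this by a short preprocessing step: for every colour class of size exceeding $\lfloor n/2 \rfloor$, delete excess edges uniformly at random among the edges of that colour so that each class has at most $\lfloor n/2 \rfloor$ occurrences. At most $n-1$ colour classes can exceed $n/2$ in size, and each loses at most $o(n)$ edges, so the total number of deletions is $o(n^2)$. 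Standard concentration bounds ensure that each vertex loses only $o(n)$ edges, so the resulting coloured graph is a spanning subgraph of $K_n$ of density $1-o(1)$ and minimum degree $(1-o(1))n$, with a $\lfloor n/2 \rfloor$-bounded, $o(n)$-locally bounded colouring.

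I would then apply Theorem~\ref{thm: near spanning cycle} to this modified coloured graph. The theorem supplies $(1-o(1))n/2$ edge-disjoint rainbow cycles of length $(1-o(1))n$ in the subgraph, and these are a fortiori rainbow cycles in the original $K_n$, which is exactly the conclusion of the corollary.

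The main obstacle is not the corollary itself, which is a short deduction, but the underlying Theorem~\ref{thm: near spanning cycle}: it must assemble many edge-disjoint almost-spanning rainbow cycles simultaneously while controlling both the global and local repetition of colours. The reduction above needs only two elementary facts: that the slack built into the conclusion (both in the number of cycles and in their length) easily absorbs the $o(n^2)$ edges lost to preprocessing, and that the quasirandomness / minimum-degree type hypotheses of Theorem~\ref{thm: near spanning cycle} are robust to $o(1)$-proportional edge deletions performed in a well-spread (random) manner.
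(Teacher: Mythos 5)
Your proposal is essentially correct, but the preprocessing step is unnecessary and stems from a misreading of Theorem~\ref{thm: near spanning cycle}. You assume that the theorem ``most naturally takes the dual bound $(1-o(1))n/2$'', but in fact the hypothesis of Theorem~\ref{thm: near spanning cycle} is \emph{already} a $\tfrac12(1+\eta)dn$-boundedness condition, i.e.\ it allows colour classes to have size up to $(1+\eta)n/2$ when $G=K_n$ and $d=1$. This is precisely because the theorem produces only an $\alpha$-decomposition into $\alpha$-spanning cycles: the slight overuse of each colour is simply absorbed into the $\alpha$-fraction of uncovered edges, so no dual bound is needed. (You may be thinking of Theorems~\ref{thm: perfect decomp} and~\ref{thm: spanning cycle}, which yield genuinely spanning structures and therefore do require the $(1-\alpha)$-boundedness.)

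Given this, Corollary~\ref{cor1} follows from Theorem~\ref{thm: near spanning cycle} in one line: $K_n$ is trivially $(\eta,1)$-quasirandom for every fixed $\eta>0$ and $n$ large (degrees $n-1$, codegrees $n-2$), and any $(1+o(1))n/2$-bounded, locally $o(n)$-bounded colouring is, for $n$ large, $\tfrac12(1+\eta)n$-bounded and locally $\eta n$-bounded for the $\eta$ produced by the theorem with the desired $\alpha$. The theorem then gives an $\alpha$-decomposition into rainbow $\alpha$-spanning cycles; counting edges shows there are at least $(1-\alpha)\binom{n}{2}/n\ge(1-2\alpha)n/2$ such cycles, each of length at least $(1-\alpha)n$, which is exactly the claim. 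Your random edge-deletion step would also work if one wished to strengthen the global bound, and your check that quasirandomness survives well-spread $o(n^2)$-edge deletion is sound (you would additionally need to note that codegrees drop by at most $o(n)$ per pair, not just degrees, which follows since each vertex loses only $o(n)$ edges), but it is strictly extra work here and should be dropped.
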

As noted above, even for proper colourings, the corollary is best possible up to the value of the final error term, i.e.~we cannot guarantee a Hamilton cycle.
Moreover, a slight modification of the construction of Pokrovskiy and Sudakov in~\cite{PS17a}, shows that there are locally $o(n)$-bounded, $(n-1)/2$-bounded edge-colourings of $K_n$ with no rainbow cycle longer than $n-\Omega(\log n)$.
For a more detailed discussion, see Section~\ref{concl}.

It is, however, more desirable to have spanning (rather than almost-spanning) structures. Which conditions guarantee the existence of a rainbow Hamilton cycle? Albert, Frieze and Reed \cite{AFR95} showed that there exists $\mu >0$, such that in any $\mu n$-bounded edge-colouring of $K_n$ there is a rainbow Hamilton cycle. Their result was greatly extended by B\"ottcher, Kohayakawa, and  Procacci~\cite{BKP}, who showed that any $n/(51\Delta^2)$-bounded edge-colouring of $K_n$ contains a rainbow copy of $H$ for any $n$-vertex graph $H$ with maximum degree at most $\Delta$.

Note that these requirements are quite strong compared to the trivial (global) $(n-1)/2$-boundedness condition which is the limit of what one could hope for. If we impose a global bound of $(1-o(1))n/2$ on the sizes of each colour class, then it turns out that we can still guarantee rainbow spanning structures, provided some moderate local boundedness conditions hold. The following is a corollary of  our third and fourth main theorems (see Theorems~\ref{thm: perfect decomp} and~\ref{thm: spanning cycle}). For  given graphs $F$ and $G$, we say that $L \subseteq G$ is an {\it $F$-factor} if $L$ consists of vertex-disjoint copies of $F$ covering all vertices of $G$.

\begin{corollary}\label{cor2}
  For any $\eps>0$, there exist $\eta>0$ and $n_0$ such that for all $n\ge n_0$, any $(1-\eps)\frac{n}2$-bounded, locally $\frac{\eta n}{\log^{4} n}$-bounded edge-colouring of $K_n$ contains a rainbow Hamilton cycle and a rainbow triangle-factor (assuming that $n$ is divisible by $3$ in the latter case).
\end{corollary}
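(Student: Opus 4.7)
The plan is to derive Corollary~\ref{cor2} as an immediate consequence of Theorems~\ref{thm: perfect decomp} and~\ref{thm: spanning cycle}. Both theorems, as previewed in the abstract, provide approximate rainbow decompositions of $K_n$ under a global bound of $(1-o(1))\frac{n}{2}$ and a local bound of $o(n/\log^4 n)$ on the edge-colouring. Since the corollary only demands a single rainbow Hamilton cycle (and a single rainbow triangle-factor), once the hypotheses of the relevant decomposition theorem are verified it suffices to extract one element of the guaranteed family.

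Concretely, given $\varepsilon > 0$, I would choose $\eta = \eta(\varepsilon) > 0$ small and $n_0 = n_0(\varepsilon)$ large enough that, for every $n \geq n_0$, any $(1-\varepsilon)\frac{n}{2}$-bounded and locally $\frac{\eta n}{\log^4 n}$-bounded edge-colouring of $K_n$ automatically satisfies the asymptotic hypotheses of Theorems~\ref{thm: perfect decomp} and~\ref{thm: spanning cycle}. The global bound is trivial for fixed $\varepsilon$, since $(1-\varepsilon)\frac{n}{2}$ is of the form $(1-o(1))\frac{n}{2}$, and the local bound follows by choosing $\eta$ small relative to whichever $o(\cdot)$ function features in the statements of those two theorems.

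With the hypotheses in place, Theorem~\ref{thm: spanning cycle} yields an edge-disjoint family of rainbow Hamilton cycles in $K_n$, and any single member of the family is the rainbow Hamilton cycle required by the corollary. Similarly, assuming $3 \mid n$, Theorem~\ref{thm: perfect decomp} applied with $F = K_3$ gives an approximate decomposition into rainbow triangle-factors, and any one of these is the desired rainbow triangle-factor. The only step requiring a moment's thought is checking that the exponent $\log^4 n$ in the local boundedness of the corollary matches (or is dominated by) the exponents used in each of the two theorems; if both theorems state their local bound with the same $\log^4$ exponent as in the corollary, no further work is needed, and if Theorem~\ref{thm: perfect decomp} happens to allow a smaller log-power for triangle-factors then the inclusion is still immediate. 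Thus the main obstacle lies not in the derivation of Corollary~\ref{cor2} itself, but in the proofs of the underlying decomposition theorems.
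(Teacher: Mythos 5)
Your proposal is correct and matches the paper's route: the corollary is derived directly from Theorems~\ref{thm: perfect decomp} and~\ref{thm: spanning cycle} applied to $K_n$ (which is $(\eta,1)$-quasirandom for large $n$) with $d=1$ and $\alpha=\eps$, then extracting a single member of the resulting approximate decomposition. The one detail you left hanging — the log-exponent in Theorem~\ref{thm: perfect decomp} — works out exactly: for $F=K_3$ one has $a(K_3)=2$, so $\log^{2a}n=\log^4 n$ as in the corollary.
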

In particular, any proper, $(1-o(1))n/2$-bounded edge-colouring of $K_n$ contains a rainbow Hamilton cycle. Bipartite versions of this, where one of the aims  is to find rainbow perfect matchings in $(1-o(1))n$-bounded edge-colourings of $K_{n,n}$,
have been intensively studied, see e.g.~\cite{HJ08,P15}.

Corollary~\ref{cor2} is best possible in the following sense:
 as mentioned above, a proper (and thus $n/2$-bounded) edge-colouring of $K_n$ does not guarantee a rainbow Hamilton cycle. In fact, this condition does not even ensure the existence of $n$ different colours required for a Hamilton cycle.

\subsection{(Approximate) decompositions of complete graphs into rainbow structures} As already mentioned, Euler was interested in finding Latin squares that are decomposable into full transversals.
This corresponds to finding decompositions of properly edge-coloured complete bipartite graphs
$K_{n,n}$ into perfect rainbow matchings. More generally, we say that a graph $G$ has a {\it decomposition} into graphs $H_1,\ldots, H_k$ if $E(G)=\bigcup_{i=1}^k E(H_i)$ and the edge sets of the $H_i$ are pairwise disjoint. The existence of various decompositions of $K_n$ is a classical topic in design theory, related to Room squares \cite{W74}, Howell designs \cite{R78} and Kotzig factorizations \cite{CM82}. In the setting of these questions, however, one is allowed to construct both the colouring and the decomposition. But, once again, it is natural to ask what one can say for arbitrary colourings with certain restrictions.

The most studied case is that of decompositions into trees. The following conjecture was raised, with some variations, by Brualdi and Hollingsworth \cite{BH96}, Kaneko, Kano, and Suzuki \cite{KKS02} and Constantine \cite{C02}: prove that every properly coloured complete graph is (almost) decomposable into (possibly isomorphic) rainbow spanning trees. Recently Pokrovskiy and Sudakov \cite{PS17b} as well as Balogh, Liu and Montgomery \cite{BLM17} independently showed that in a properly edge-coloured $K_n$ one can find a collection of linearly many edge-disjoint rainbow spanning trees.

Our results actually work in the setting of {\it approximate decompositions}.  We say that a collection of edge-disjoint subgraphs $L_1,\ldots, L_t$ of $G$ is an {\it $\eps$-decomposition of $G$}, if they contain all but at most an $\eps$-proportion of the edges of $G$. The following result is a special case of Theorem~\ref{thm: spanning cycle}.

\begin{corollary}\label{cor3}
  For any $\eps>0$, there exist $\eta>0$ and $n_0$ such that for all $n\ge n_0$,  any $(1-\eps)\frac{n}2$-bounded, locally $\frac{\eta n}{\log^{4} n}$-bounded edge-colouring of $K_n$ has an $\eps$-decomposition into rainbow Hamilton cycles.
\end{corollary}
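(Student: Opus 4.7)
The plan is to deduce Corollary~\ref{cor3} as an immediate instantiation of Theorem~\ref{thm: spanning cycle}, since the excerpt explicitly positions the corollary in exactly this way. Specialising the host graph to $K_n$ (the canonically quasirandom graph), the hypotheses of Theorem~\ref{thm: spanning cycle} on colour multiplicities reduce to the stated $(1-\eps)n/2$ global bound and $\eta n/\log^{4} n$ local bound, so the only remaining task at the corollary level is to pick $\eta$ as a function of $\eps$ as demanded by the theorem. Hence almost nothing is required beyond checking parameters; the substance lives in Theorem~\ref{thm: spanning cycle}, whose proof strategy I sketch below.

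To build the $\eps$-decomposition, I would iterate a one-cycle extraction roughly $(1-\eps/2)n/2$ times. Writing $G_t$ for the residual graph at step $t$, the invariants I would maintain are: (i) $G_t$ is near-regular with well-distributed codegrees; (ii) the global colour bound has decreased from $(1-\eps)n/2$ by at most one per step; (iii) the local colour bound has loosened by at most a constant factor. Given these invariants, a single rainbow Hamilton cycle of $G_t$ can be produced by an absorbers-plus-random-greedy scheme: first reserve a small rainbow absorbing gadget inside $G_t$ capable of closing any sufficiently long rainbow path into a Hamilton cycle; then extend a rainbow path by a sequential random-edge procedure, applying a Freedman-type (or Talagrand) concentration inequality to argue that the set of already-used colours is rarely a bottleneck. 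The local bound $\eta n/\log^{4} n$ is calibrated precisely so that the set of forbidden colours at each extension step has size $o(n)$ and so that union bounds over the $n$ extensions survive.

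The main obstacle is maintaining, across $\Theta(n)$ iterations, the joint control of pseudorandomness together with both colour bounds. Deletion of a Hamilton cycle removes exactly two edges per vertex, so degree-regularity is preserved deterministically, but codegrees and per-colour/per-vertex counts fluctuate, and I would track them by martingale concentration applied to the random choices made inside each extraction step. Aggregating these estimates across all iterations with a single union bound is essentially what consumes the $\log^{4} n$ slack; roughly, a handful of near-independent invariants (degree, codegree, per-vertex colour count, global colour count) must each survive a polylogarithmic-depth union bound, and the exponent $4$ should arise as the count of those genuinely separate invariants. Once the invariants survive to the final step, the procedure terminates with at most $\eps\binom{n}{2}$ uncovered edges, delivering the required $\eps$-decomposition.
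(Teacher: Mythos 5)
Your deduction of Corollary~\ref{cor3} from Theorem~\ref{thm: spanning cycle} is correct and is precisely the paper's route: $K_n$ is $(\eta,1)$-quasirandom for $\eta\ge 2/n$, and taking $d=1$, $\alpha=\eps$ turns the theorem's $\frac12(1-\alpha)dn$-bound and locally $\frac{\eta n}{\log^4 n}$-bound into exactly the hypotheses of the corollary; nothing more is required at the corollary level.

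However, the sketch you volunteer of Theorem~\ref{thm: spanning cycle} itself diverges substantially from the paper's argument, and one of its specific claims is wrong. The paper does \emph{not} extract Hamilton cycles iteratively while tracking invariants through $\Theta(n)$ deletions. Instead it produces the whole family of cycles essentially in one shot: the vertex set is split into $b=\Theta(\log n)$ parts, a resolvable design groups them into $s$-tuples, colours are randomly partitioned into $\approx b/s$ groups plus one reserve group, and on each $s$-partite piece Lemma~\ref{lem: random F decomp} (the Pippenger--Spencer/nibble-style matching lemma) yields an approximate decomposition into rainbow almost-spanning $C_s$-factors simultaneously. These factors are then linked and completed to Hamilton cycles using the Glock--Joos rainbow blow-up lemma on the reserve vertices and reserve colours, not by an absorbers-plus-random-greedy scheme. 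In particular, your explanation of the $\log^4 n$ exponent --- that it counts four ``genuinely separate invariants'' surviving a union bound --- is incorrect. In the paper the exponent is $2a$ with $a=a(C_s)=2$ (governed by the degree sums along short paths in $F$); it enters through Lemma~\ref{lem: colour partition}, where the failure probability is $\exp(-\Theta(p_j^{2a-1}n/\ell))$ and one needs this to beat $n^{-\Theta(1)}$ when $p_j\sim 1/\log n$ and $\ell = \eta n/\log^{2a}n$. An iterative extraction scheme could in principle be made to work for \emph{approximate} decompositions, but it would face real difficulties (rainbow absorbers, controlling per-colour usage over $\Theta(n)$ steps) that the one-shot nibble approach sidesteps, and it would not obviously reproduce the stated $\log^4 n$ local bound.
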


Note that this corollary implies an approximate version of the three conjectures on decompositions into spanning rainbow trees mentioned above. Indeed, for proper edge-colourings of $K_n$ with an additional mild restriction on the size of each colour class ($(1-\eps)n/2$ instead of $n/2$), rainbow Hamilton cycles with one edge removed give us an approximate decomposition into isomorphic spanning paths. Similarly, Corollary~\ref{cor1} also implies an approximate version of the above conjectures as it gives (without any restriction on the sizes of the colour classes) an approximate decomposition into almost-spanning paths.

\subsection{Rainbow spanning structures and decompositions in quasirandom graphs}\label{sec2} Our results
actually hold not only for colourings of $K_n$, but in the much more general setting of quasirandom graphs (and thus for example with high probability for dense random graphs). One of our main proof ingredients is a recent powerful result of Glock and Joos \cite{GJ18}, who proved a rainbow blow-up lemma which allows to find rainbow copies of spanning subgraphs in a suitably quasirandom graph $G$, provided that the colouring is $o(n)$-bounded (see Theorem~\ref{blowup}). As a consequence, they proved a rainbow bandwidth theorem under the same condition on the colouring. Note however that their blow-up lemma does not directly apply in our setting, as the restriction on the colouring is much stronger than in our case. We nevertheless can use it in our proofs since we apply it in a small random subgraph, on which the colouring has the necessary boundedness condition.

To formulate our results, we need the definition of a quasirandom graph. This will require some preparation. For $a,b,c\in \mathbb{R}$ we write $a = b\pm c$ if $b-c \leq a \leq b+c$. We define $\binom{X}{k}:=\{A\subseteq X: |A|=k\}$.
For a vertex $v$ in a graph $G$, let $d_G(v)$ denote its degree and $N_G(v)$ its set of neighbours. The maximum and minimum degrees of $G$ are denoted by $\Delta(G)$ and $\delta(G)$, respectively. For $u,v\in V(G)$, we put $N_G(u,v) := N_G(u)\cap N_G(v)$ and $d_{G}(u,v)=|N_{G}(u,v)|$. The latter function we call the {\it codegree} of $u$ and $v$. We will sometimes omit the subscript $G$ when the graph is clear from the context.

We say that an $n$-vertex graph $G$ is {\it $(\epsilon,d)$-quasirandom} if
$d(v)= (d\pm \epsilon)n$ for each $v\in V(G)$ and
\begin{equation}\label{eq: irreg} \Big|\big\{uv \in \binom{V(G)}{2}: d(u,v)\neq (d^2\pm \epsilon)n\big\}\Big|\le \eps n^2.\end{equation}

Note that this is weaker than the standard notion of $(\eps, d)$-quasirandomness, where the set of exceptional vertex pairs having the ``wrong'' codegree is required to be empty (on the other hand, our notion is very close to the classical notion of
$\eps$-superregularity). Our first  theorem guarantees the existence of an approximate decomposition into almost-spanning $F$-factors.
For graphs $F$, $G$ and $0\le \alpha\le 1$, we say that $L$ is an {\it $\alpha$-spanning $F$-factor in $G$}, if $L$ is a subgraph of $G$, consisting of vertex-disjoint copies of $F$ and containing all but at most an $\alpha$-proportion of the vertices of $G$. We define an {\it $\alpha$-spanning cycle in $G$} analogously.

\begin{theorem}\label{thm: approx decomp}
For given $\alpha,d_0>0$ and $f,h\in \mathbb N$, there exist $\eta>0$ and $n_0$ such that the following holds for all $n\ge n_0$ and $d\ge d_0$. Suppose that $G$ is an $n$-vertex $(\eta,d)$-quasirandom graph and $F$ is an $f$-vertex $h$-edge graph. If $\phi$ is a $(1+\eta)\frac{f dn}{2h}$-bounded, locally  $\eta n$-bounded  edge-colouring of $G$, then $G$ contains an $\alpha$-decomposition into rainbow $\alpha$-spanning $F$-factors.
\end{theorem}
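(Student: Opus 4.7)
The plan is to build the $\alpha$-decomposition by iteratively extracting rainbow $\alpha$-spanning $F$-factors, with the Glock--Joos rainbow blow-up lemma (Theorem~\ref{blowup}) as the main tool. The principal tension is that $\phi$ is only assumed to be locally $\eta n$-bounded, a constant fraction of $n$, while Theorem~\ref{blowup} requires the local boundedness to be $o(n)$ relative to the vertex count. As flagged in the discussion preceding Theorem~\ref{thm: approx decomp}, the escape is to pass to a small random subgraph in which the per-vertex colour counts shrink into the Glock--Joos regime while the vertex set does not.

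First, fix a large integer $T = T(\alpha, d_0, f, h)$ and randomly partition $E(G)$ into edge-disjoint subgraphs $G_1, \ldots, G_T$, assigning each edge independently and uniformly to one of the $T$ classes. By Chernoff bounds and a union bound, with high probability every $G_i$ is $(\eta', d/T)$-quasirandom for some $\eta' = \eta'(\eta, T)$, while $\phi$ restricted to $G_i$ is locally $(1+o(1))\eta n/T$-bounded and globally $(1+o(1))(1+\eta)fdn/(2hT)$-bounded. Taking $T$ sufficiently large drives the local bound on $G_i$ below the $o(|V(G_i)|) = o(n)$ threshold required by Glock--Joos. Each $G_i$ has roughly $dn^2/(2T)$ edges, and a single rainbow $\alpha$-spanning $F$-factor uses roughly $(1-\alpha)nh/f$ edges, so the natural target inside $G_i$ is approximately $(1-\alpha/2)fdn/(2hT)$ pairwise edge-disjoint rainbow $\alpha$-spanning $F$-factors; summing over $i$ yields an $\alpha$-decomposition of $G$ as required.

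Inside a fixed $G_i$ the extraction is iterative. Starting from $G_i^{(0)} := G_i$, at step $k$ one applies Theorem~\ref{blowup} to $G_i^{(k-1)}$ to pull out a rainbow $\alpha$-spanning $F$-factor $L_i^{(k)}$, then sets $G_i^{(k)} := G_i^{(k-1)} - E(L_i^{(k)})$. To keep this running for the $\Theta(n)$ required steps, the residual graph $G_i^{(k)}$ must stay quasirandom with slowly decaying density and the residual colouring must stay locally $o(n)$-bounded. The standard toolkit for this, familiar from nibble-based approximate decomposition arguments, is either to reserve a small random set of edges as a ``reservoir'' inside each $G_i$, or to invoke Theorem~\ref{blowup} on a freshly randomised small sub-sample of $G_i^{(k-1)}$ at each step, while tracking degrees, codegrees and per-vertex colour counts through a Freedman-type martingale concentration.

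The main obstacle is precisely this maintenance step: Theorem~\ref{blowup} only asserts existence of a rainbow $F$-factor and affords no distributional control, whereas maintaining quasirandomness and local colour-boundedness across $\Theta(n)$ iterations requires each $L_i^{(k)}$ to behave approximately like a uniformly random such factor in $G_i^{(k-1)}$. Rather than opening up the Glock--Joos proof, the resolution is to re-randomise at every step: pick a fresh small random sub-sample of $G_i^{(k-1)}$ in which $\phi$ becomes $o(n)$-bounded with a margin, apply Theorem~\ref{blowup} there, and let the chosen $L_i^{(k)}$ inherit enough randomness to drive martingale concentration on all relevant statistics. With the parameter hierarchy $\eta \ll 1/T \ll \alpha, 1/f, 1/h, d_0$, the counts work out and the argument closes.
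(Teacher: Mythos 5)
Your proposal takes a fundamentally different route from the paper, and unfortunately the key step does not close. The paper does not use the Glock--Joos rainbow blow-up lemma to prove Theorem~\ref{thm: approx decomp} at all. Instead it reduces the theorem to a single application of Lemma~\ref{lem: random F decomp}, whose engine is a ``defect'' Pippenger--Spencer theorem (Lemma~\ref{lem: random matching}). One builds an auxiliary hypergraph whose vertex set is $E(G)\cup (V(G)\times[t])\cup ([m]\times[t])$ and whose hyperedges encode rainbow copies of $F$ together with their colours; a near-perfect matching of this hypergraph \emph{is} an approximate decomposition into rainbow almost-spanning $F$-factors, and the R\"odl-nibble machinery delivers the whole collection of factors in one shot, together with the needed distributional control (``each vertex lies in each factor with probability $\geq 1-\delta$''). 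The discussion of using Theorem~\ref{blowup} on a small random sub-sample that you picked up from the introduction refers to Theorems~\ref{thm: perfect decomp} and~\ref{thm: spanning cycle}, where it is used only once per factor to finish an already-almost-spanning structure delivered by Lemma~\ref{lem: random F decomp}; it is never iterated $\Theta(n)$ times as the main extraction step.

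The gap in your argument is precisely the maintenance step you flag as ``the main obstacle''. Theorem~\ref{blowup} is purely existential: it asserts that \emph{some} rainbow embedding of $H$ exists, with no control over which one is produced. To iterate it $\Theta(n/T)$ times while removing a constant fraction of the edges, you need the residual graph $G_i^{(k)}$ to remain quasirandom (in particular to keep the codegree condition \eqref{eq: irreg}) and you need the colour classes to stay balanced relative to the residual graph. An adversarially chosen output of the blow-up lemma can defeat both: repeated extractions could, for example, systematically drain the neighbourhood of a fixed vertex or deplete the rare colours, and nothing in the lemma rules this out. Your proposed remedy --- ``pick a fresh small random sub-sample of $G_i^{(k-1)}$ and let $L_i^{(k)}$ inherit enough randomness to drive martingale concentration'' --- does not repair this: a random sub-sample inherits whatever pathologies the cumulative adversarial removals have already introduced into $G_i^{(k-1)}$, and the output of the blow-up lemma on that sub-sample is again an arbitrary admissible embedding, not a random one, so there is no martingale with controllable increments. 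To make a random-greedy scheme of this type work one would have to replace the purely existential blow-up lemma by a version that outputs a near-uniformly random rainbow $F$-factor (or prove a counting version and sample), which is a genuinely new ingredient and not what Theorem~\ref{blowup} provides. The paper sidesteps the whole issue by never iterating the blow-up lemma: the nibble, via Theorem~\ref{lem: Pippenger} and Lemmas~\ref{lem: random matching} and~\ref{lem: random F decomp}, produces the entire approximate decomposition at once, with the requisite randomness baked in.
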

Note that the $(1+o(1))\frac{f dn}{2h}$-boundedness of the colouring cannot be replaced by a weaker condition even for a single $o(1)$-spanning $F$-factor, since we are only guaranteed roughly $|E(G)|/((1+o(1))\frac{f dn}{2h})=(1-o(1))\frac{hn}{f}$ distinct colours in such a colouring. On the other hand, an $o(1)$-spanning $F$-factor also contains $(1-o(1))\frac{hn}{f}$ edges of distinct colours. In the case when $F$ is an edge (i.e. when we are looking for an almost perfect rainbow matching), a much stronger conclusion holds:
we can in fact drop the quasirandomness condition and consider much sparser graphs (see Section~\ref{concl}).

The next theorem guarantees the existence of an approximate decomposition into almost-spanning rainbow cycles.

\begin{theorem}\label{thm: near spanning cycle}
For given $\alpha,d_0>0$, there exist $\eta>0$ and $n_0$ such that the following holds for all $n\ge n_0$ and $d\ge d_0$.
Suppose that $G$ is an $n$-vertex $(\eta,d)$-quasirandom graph.
If $\phi$ is a $\frac{1}{2}(1+\eta)dn$-bounded, locally $\eta n$-bounded edge-colouring of $G$, then $G$ contains an $\alpha$-decomposition into rainbow $\alpha$-spanning cycles.
\end{theorem}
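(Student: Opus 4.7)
The plan is to reduce Theorem~\ref{thm: near spanning cycle} to Theorem~\ref{thm: approx decomp} applied with $F$ equal to a long path $P_k$: first use the $F$-factor decomposition theorem to obtain an approximate decomposition of $G$ into rainbow almost-spanning path-factors, and then close each path-factor into a single rainbow almost-spanning cycle by adding a small number of edges from a pre-reserved sparse subgraph of $G$.

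Concretely, I would fix $k=k(\alpha)$ large and $\beta$ with $1/k\ll\beta\ll\alpha$, and randomly split $G$ into $G_{\mathrm{main}}\cup G_{\mathrm{res}}$ by placing each edge into $G_{\mathrm{res}}$ with probability $\beta$ independently. A Chernoff-and-union-bound argument shows that with high probability $G_{\mathrm{main}}$ is $(\eta',(1-\beta)d)$-quasirandom and the restricted colouring on it is still $(1-\beta/2)\frac{(1+\eta)dn}{2}$-bounded globally and locally $2\eta n$-bounded, while $G_{\mathrm{res}}$ is $(\eta',\beta d)$-quasirandom with each colour appearing at most $(1+o(1))\beta\frac{(1+\eta)dn}{2}$ times in it. Since $\frac{k(1-\beta)}{k-1}(1+\eta_0)\ge 1+\eta$ for $k$ large and $\beta,\eta$ small, the colouring on $G_{\mathrm{main}}$ satisfies the hypothesis of Theorem~\ref{thm: approx decomp} applied with $F=P_k$ (so $f=k$, $h=k-1$), with $d$ replaced by $(1-\beta)d$ and $\alpha$ replaced by $\alpha/4$. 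This yields rainbow $\alpha/4$-spanning $P_k$-factors $L_1,\ldots,L_t$ with $t=(1\pm\alpha/4)\frac{dn}{2}$ forming an $\alpha/4$-decomposition of $G_{\mathrm{main}}$.

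The principal obstacle is the stitching step: for each $i$, I need to close the $\le n/k+1$ paths of $L_i$ into a single rainbow $\alpha$-spanning cycle $C_i$ by adding edges from $G_{\mathrm{res}}$, while ensuring the $C_i$ remain pairwise edge-disjoint and individually rainbow. Processing the $L_i$ sequentially, at step $i$ I would form an auxiliary coloured graph $H_i$ on the set of path endpoints of $L_i$, whose edges are those $e\in E(G_{\mathrm{res}})\setminus\bigcup_{i'<i}E(C_{i'})$ carrying a colour that does not appear in $L_i$ and whose residual budget has not been exhausted; closing the paths into a cycle then corresponds to finding a rainbow Hamilton cycle in $H_i$ (under an appropriate endpoint pairing). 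Because $|V(H_i)|=O(n/k)$ is small, and the restriction of the colouring to the relevant edges is $o(|V(H_i)|)$-bounded by virtue of the global and local boundedness assumptions transferred to $G_{\mathrm{res}}$, the rainbow blow-up lemma of Glock and Joos (Theorem~\ref{blowup}) can be invoked on $H_i$ to produce the desired rainbow Hamilton cycle — exactly the regime in which the introduction notes it is applicable.

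The remaining quantitative task is to verify that the sequential removal of stitching edges and the incremental use of colour budget do not destroy the pseudorandomness and colour-boundedness needed for each successive $H_i$. The total number of stitching edges is at most $t(n/k+1)=O(dn^2/k)$, which is $\ll|E(G_{\mathrm{res}})|=\Theta(\beta dn^2)$ since $1/k\ll\beta$, and each colour is used in stitching at most $o(\beta dn)$ times in total; hence a standard martingale or deletion argument (or an initial random reservation of a small pool of candidate connector edges per colour) shows that $H_i$ satisfies the required pseudorandom and boundedness conditions at every step. Putting everything together, the $C_i$ are pairwise edge-disjoint rainbow $\alpha$-spanning cycles collectively covering all but at most $(\alpha/4+\beta)|E(G)|<\alpha|E(G)|$ edges of $G$, which gives the desired $\alpha$-decomposition.
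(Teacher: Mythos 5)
Your reduction to Theorem~\ref{thm: approx decomp} is broadly the right idea, and the choice to reserve a sparse portion of the graph for stitching is sensible, but the stitching step as described has a genuine gap that I do not think can be repaired without substantially changing the setup.

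The problem is your proposed use of Theorem~\ref{blowup} to ``find a rainbow Hamilton cycle in $H_i$''. Theorem~\ref{blowup} embeds a bounded-degree graph $H$ with partition $\{X_0,X_1,\dots,X_r\}$ into a partite host with superregular bipartite pairs, and crucially requires $|X_0|\le\delta_2 n$ — the pre-embedded vertices must form a tiny minority, with the remaining vertices of $H$ free to be placed anywhere in the classes $V_1,\dots,V_r$. In your $H_i$ every vertex is a fixed endpoint of one of the paths of $L_i$, so every vertex is pre-embedded; there are no free vertices at all. Moreover, what you actually need is not a rainbow Hamilton cycle of $H_i$ in the usual sense but a cyclic ordering (with orientations) of the paths of $L_i$ such that the $\lesssim n/k$ forced connector edges $b_j a_{j+1}$ all lie in your residual graph, are pairwise colour-distinct, and avoid the colours of $L_i$; these connector edges are determined by the ordering, so Theorem~\ref{blowup} has nothing to optimise over. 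In short, you are invoking the blow-up lemma in exactly the regime it cannot handle.

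The paper avoids this by reserving \emph{vertices}, not edges, and by using cycles rather than paths as the unit factor. It splits $V(G)$ into small classes $V_1,V_2$ and a large class $V_3$, applies Theorem~\ref{thm: approx decomp} with $F=C_s$ (not $P_k$) on $V_3$, deletes one edge from each $s$-cycle \emph{uniformly at random}, and then joins the resulting endpoints via length-$3$ paths routed through $V_1$ and $V_2$. The interior vertices of those length-$3$ paths are precisely the free vertices that make Theorem~\ref{blowup} applicable: $X_0$ is only the endpoint set (of size $\approx 2n/s\ll\delta_2|V_j|$), $X_1,X_2$ are mapped into $V_1,V_2$, and no two vertices of $X_0$ share a common neighbour in $H$ since the $P_3$'s are vertex-disjoint. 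Using $C_s$ with a random deleted edge, rather than $P_k$, is also essential: with path-factors the endpoints are dictated by the output of Lemma~\ref{lem: random F decomp}, over which one has no probabilistic control, whereas random edge deletion gives the endpoint distribution needed to argue that no vertex or colour is overloaded across the sequential stitching steps. Your edge-split $G_{\mathrm{main}}\cup G_{\mathrm{res}}$ provides no such fresh vertex pool, so even replacing direct connector edges by length-$2$ or length-$3$ connecting paths would not help without a reserved vertex class to route them through.

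Your preprocessing (random edge-split, Chernoff bounds for inherited quasirandomness and boundedness, and the boundedness count $\frac{k-1}{k}$ versus $1$) is fine, and an approximate decomposition of $G_{\mathrm{main}}$ into rainbow $P_k$-factors does follow from Theorem~\ref{thm: approx decomp}. The proof breaks at the point where these need to be glued into cycles.
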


For the same reasons as in Theorem~\ref{thm: approx decomp}, the $\frac{1}{2}(1+\eta)dn$-boundedness condition cannot be replaced by a significantly weaker one.

If we slightly strengthen both the local and the global boundedness condition, we can obtain spanning structures, as guaranteed by the next two theorems below.
The first theorem guarantees the existence of an approximate decomposition into rainbow $F$-factors. Let us denote $a(F):=\max\{\Delta(F),a'(F),a''(F)\},$ where $a'(F)$ is the maximum of the expression $d(u)+d(v)-2$ over all edges $uv\in E(F)$, and $a''(F)$ is the maximum of the expression $d(u)+d(v)+d(w)-4$ over all paths $uvw$ in $F$. Note that $a(F)\le \max\{\Delta(F),3\Delta(F)-4\}$.

\begin{theorem} \label{thm: perfect decomp}
For given $\alpha,d_0>0$  and $a,f,h\in \mathbb N$, there exist $\eta>0$ and $n_0$ such that the following holds for all $n\ge n_0$ which are divisible by $f$ and all $d\ge d_0$. Suppose that $F$ is an $f$-vertex $h$-edge graph with $a(F)\le a$.
Suppose that $G$ is an $n$-vertex $(\eta,d)$-quasirandom graph.
If $\phi$ is a $(1-\alpha) \frac{fdn}{2h}$-bounded, locally $\frac{\eta n}{ \log^{2a}{n}}$-bounded edge-colouring of $G$, then $G$ has an $\alpha$-decomposition into rainbow $F$-factors.
\end{theorem}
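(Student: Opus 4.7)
The plan is to find the rainbow $F$-factors $L_1,\ldots,L_T$ with $T := \lfloor (1-\alpha)fdn/(2h)\rfloor$ iteratively, one per step, using the Glock--Joos rainbow blow-up lemma (Theorem~\ref{blowup}) as a black box in a random sparse subgraph of the residual graph. Fix $\beta \ll \alpha$ and let $q:=\log^{-a}n$. Begin by setting aside a random reservoir $R \subseteq E(G)$ by including each edge independently with probability $\beta$, and assign each $e \in R$ a uniformly random index in $[T]$ to form pairwise disjoint slices $R^{(1)},\ldots,R^{(T)}$. Standard Chernoff bounds, combined with the strong local colour bound, show that $R$ and $G^\ast := G - R$ are both quasirandom of the expected density, that each $R^{(i)}$ has the expected size and is evenly distributed across vertex pairs, and that the induced colourings inherit the global and local boundedness with only negligible loss.

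Now construct the $L_i$'s inductively. Write $G^{(i)} := G^\ast - \bigcup_{j<i} L_j$, which remains $(\eta',d^{(i)})$-quasirandom with $d^{(i)} \ge (\alpha/2)d$. A direct application of Theorem~\ref{blowup} to $G^{(i)}$ fails because its global colour bound $g := (1-\alpha)fdn/(2h)$ is of order $n$, whereas Theorem~\ref{blowup} requires an $o(n)$-bounded colouring. To bridge this gap, sample $H^{(i)} \subseteq G^{(i)}$ by retaining each edge independently with probability $q$; Chernoff-type concentration then shows that $\widetilde H^{(i)} := H^{(i)} \cup R^{(i)}$ is quasirandom of density $\Theta(d/\log^a n)$, and that its induced colouring is globally $o(n)$-bounded (as $qg = o(n)$) and locally $o(n/\log^a n)$-bounded (as $q\ell = O(n/\log^{3a}n)$). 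Applying Theorem~\ref{blowup} to $\widetilde H^{(i)}$ then produces a rainbow spanning $F$-factor $L_i$. The preassigned slices ensure that each reservoir edge enters at most one $L_i$, and because we sparsify within $G^{(i)}$ rather than $G^\ast$, non-reservoir edges already used in some $L_j$ are automatically excluded, guaranteeing overall edge-disjointness.

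The chief technical obstacle is to make this scheme work simultaneously for all $T = \Theta(n)$ iterations, despite the fact that $G^{(i)}$ depends on the earlier factors $L_j$ with $j<i$. The remedy is to fix, before any $L_j$ is constructed, a single high-probability ``good event'' on the underlying randomness (reservoir split and the sequence of sparsifications) on which every iteration succeeds, regardless of which $L_j$'s the blow-up lemma outputs in preceding steps. The exponent $2a$ in the hypothesis $\ell = \eta n/\log^{2a}n$ arises precisely here: checking that $\widetilde H^{(i)}$ is suitable for Theorem~\ref{blowup} reduces to controlling certain colour-restricted codegree counts attached to partial embeddings of $F$ at every vertex. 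The worst-case fluctuations of these statistics after random sampling grow like $\log^{a'(F)+1}n$ for edge-based codegrees and like $\log^{a''(F)+1}n$ for codegrees along $2$-paths in $F$; once these bounds must survive both the random split of the reservoir and the sparsification step, they compound to yield the final budget $\log^{2a(F)}n$, explaining the hypothesis on $\ell$.
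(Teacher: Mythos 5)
Your plan is to produce each rainbow $F$-factor $L_i$ by a fresh application of the Glock--Joos rainbow blow-up lemma (Theorem~\ref{blowup}) inside a random sparsification $\widetilde H^{(i)}$ of the residual graph. The central gap is the density at which you invoke Theorem~\ref{blowup}. You sparsify by $q=\log^{-a}n$, so $\widetilde H^{(i)}$ has density $\Theta(d/\log^a n)=o(1)$. Theorem~\ref{blowup} is stated and proved under the hierarchy $1/n\ll\delta_2\ll \gamma, 1/r, d, 1/\Delta$, in which $d$ is a fixed constant and $\delta_2$ (the superregularity parameter and the colour-boundedness parameter, measured relative to the part size) is chosen afterwards; there is no version of this rainbow blow-up lemma, in the paper or in~\cite{GJ18}, that applies at vanishing density. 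You cannot escape this: the raw global bound $g=(1-\alpha)\frac{fdn}{2h}=\Theta(n)$ is too large for the $\delta_2 n$-boundedness hypothesis of Theorem~\ref{blowup} on a constant-density graph on $\Theta(n)$ vertices (since $\delta_2\ll d_0$), so you are forced to dilute the colouring, but any dilution by a factor tending to $0$ also dilutes the density and leaves the blow-up lemma inapplicable. This is precisely the dilemma the paper is designed to avoid: Theorem~\ref{blowup} is used only to complete each factor on a randomly chosen vertex set of size $\Theta(\delta_1 n)$ using colours from a constant ($\gamma$)-fraction $I_{q+1}$ of the palette, so the graph handed to the blow-up lemma always has constant density $\gamma d$ and a colouring bounded by $O(\delta n)\ll\delta_2\delta_1 n$. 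The bulk of each $F$-factor is instead produced by a single application of the Pippenger--Spencer nibble (via Lemmas~\ref{lem: random matching} and~\ref{lem: random F decomp}) to an auxiliary hypergraph built over a partition of $V(G)$ into $b=O(\log n)$ parts organised by a resolvable design (Theorem~\ref{cl: r-factor}) and a corresponding random partition of the colours.

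Even setting the density aside, the ``fix a good event up front'' step does not survive scrutiny. The sparsification $H^{(i)}\subseteq G^{(i)}$ must depend on $G^{(i)}=G^\ast-\bigcup_{j<i}L_j$, which in turn depends on all earlier blow-up lemma outputs. If instead you pre-sample all the Bernoulli coins, the good event must hold for every possible history $(L_1,\dots,L_{i-1})$; the number of such histories is $\exp(\Theta(n^2\log n))$, far exceeding the $\exp(O(n))$-type failure probabilities the concentration inequalities provide. Pre-sampling and then deleting $\bigcup_{j<i}L_j$ from $H^{(i)}$ does not help either: for $i=\Theta(n)$ the deleted set has $\Theta(n^2)$ edges and its intersection with $H^{(i)}$ has expected size $\Theta(qn^2)$, which is comparable to $|H^{(i)}|$ itself. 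The paper sidesteps all of this by arranging that the random choices (vertex partition, colour partition, nibble, choice of reservoir sets $V_1(i,k)$) are made \emph{before} any blow-up lemma is applied, with the relevant ``good'' properties ($\mathbf A$, $\mathbf B$, $\mathbf{C_i}$, $\mathbf{D_i}$, $\mathbf{E_i}$) proved by union bounds over polynomially many events, and by quantifying the degradation of the reservoir graph through $\mathbf{E_i}$ before the $i$-th iteration begins. Finally, the paper's exponent $\log^{2a}n$ has a precise origin that your explanation does not match: Lemma~\ref{lem: colour partition} gives a failure probability $\exp(-\epsilon^4 p_j^{2a-1}n/\ell)$ with $p_j=\Theta(1/\log n)$, and requiring this to be $o(n^{-C})$ forces $\ell\ll n/\log^{2a}n$; the parameter $a$ enters because the sets $U$ on which the lemma is applied (a vertex, an edge, a $P_2$) have at most $a=a(F)$ edges of $F$ incident to them.
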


In a similar setting, we can also obtain an approximate decomposition into rainbow spanning cycles.

\begin{theorem}\label{thm: spanning cycle}
For given $\alpha,d_0>0$, there exist $\eta>0$ and $n_0$ such that the following holds for all $n\ge n_0$ and $d\ge d_0$.
Suppose that $G$ is an $n$-vertex $(\eta,d)$-quasirandom graph.
If $\phi$ is a $\frac{1}{2}(1-\alpha)dn$-bounded, locally  $\frac{\eta n}{\log^4{n}}$-bounded  edge-colouring of $G$, then $G$ contains an $\alpha$-decomposition into rainbow Hamilton cycles.
\end{theorem}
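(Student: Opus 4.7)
The plan is to combine the approximate decomposition into almost-spanning rainbow cycles given by Theorem~\ref{thm: near spanning cycle} with an absorbing strategy, where the absorbers are constructed inside a small random reservoir via the rainbow blow-up lemma of Glock and Joos (Theorem~\ref{blowup}). First, partition $E(G)$ randomly into a sparse \emph{reservoir} $G_{\mathrm{res}}$, obtained by placing each edge in $G_{\mathrm{res}}$ independently with probability $p=p(\alpha)$ small, and a \emph{bulk} $G_{\mathrm{bulk}}:=G\setminus G_{\mathrm{res}}$. By Chernoff-type concentration and the hypothesis that $\phi$ is locally $\eta n/\log^{4}n$-bounded, with high probability $G_{\mathrm{bulk}}$ is $(\eta',(1-p)d)$-quasirandom and still satisfies the hypothesis of Theorem~\ref{thm: near spanning cycle} (with $\alpha$ replaced by a slightly larger $\alpha_1$), while $G_{\mathrm{res}}$ is $(\eta',pd)$-quasirandom and its restricted colouring is globally $O(pdn)$-bounded and locally $O(p\eta n/\log^{4}n)$-bounded; crucially, relative to pieces of $G_{\mathrm{res}}$ of polylogarithmic size, this is $o(1)$-bounded in the sense required by Theorem~\ref{blowup}.

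Next, set $t:=\tfrac{1}{2}(1-\tfrac{\alpha}{2})(1-p)dn$ and construct, inside $G_{\mathrm{res}}$, edge-disjoint rainbow \emph{absorber paths} $A_1,\dots,A_t$ with endpoint pairs $(x_i,y_i)$, each of length $o(n)$, with pairwise disjoint palettes $\Phi_i:=\phi(E(A_i))$ together with small pairwise disjoint ``spare palettes'' $\Phi_i^+\subseteq \phi(G_{\mathrm{res}})$. The defining property is absorption: for every $S\subseteq V(G)\setminus V(A_i)$ with $|S|\le \alpha_1 n$, there is a rainbow $x_i$--$y_i$ path in $G_{\mathrm{res}}[V(A_i)\cup S]$ using only colours in $\Phi_i\cup\Phi_i^+$. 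Such paths are assembled by first finding, at every vertex $v\in V(G)$, many vertex-disjoint local ``absorbing gadgets'' covering $v$, and then concatenating gadgets into one path per target cycle using Theorem~\ref{blowup} on superregular bipartite pieces of $G_{\mathrm{res}}$; the role of the $\log^{4}n$ factor is precisely to allow the rainbow blow-up lemma to go through inside these polylogarithmic pieces after the $p$-sparsification.

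Third, apply Theorem~\ref{thm: near spanning cycle} to $G_{\mathrm{bulk}}':=G_{\mathrm{bulk}}$ with all colour classes in $\bigcup_i(\Phi_i\cup\Phi_i^+)$ deleted, choosing parameters so as to produce an $(\alpha/3)$-decomposition into almost-spanning rainbow cycles $C_1,\dots,C_s$ with $s\ge t$. Using a small number of pre-reserved connecting edges in $G_{\mathrm{res}}$ (or by coupling the absorber construction with a preliminary vertex pre-partition), arrange that $C_i$ contains the edge $x_iy_i$ and that $V(C_i)\cap V(A_i)=\{x_i,y_i\}$. Set $S_i:=V(G)\setminus(V(C_i)\cup V(A_i))$, so $|S_i|=o(n)$; by the absorbing property there is a rainbow $x_i$--$y_i$ path $P_i$ through $V(A_i)\cup S_i$ in colours $\Phi_i\cup\Phi_i^+$. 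Replacing the edge $x_iy_i$ in $C_i$ by $P_i$ yields a rainbow Hamilton cycle $H_i$. Edge- and colour-disjointness across $i$ follows from the edge-disjointness of the $A_i$ and the $C_i$ together with the disjointness of the palettes $\Phi_i\cup\Phi_i^+$ across $i$ and from the bulk colours; counting edges shows that $\{H_i\}_{i\le t}$ is an $\alpha$-decomposition of $G$.

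The principal obstacle is the simultaneous construction of $\Theta(dn)$ edge- and colour-disjoint rainbow absorbers inside the sparse reservoir. Theorem~\ref{blowup} needs each colour to appear $o(|V|)$ times in the host, but the relevant absorbing gadgets live on pieces of $G_{\mathrm{res}}$ of polylogarithmic size, so after sparsification one needs the colouring of $G$ itself to be locally $o(n/\mathrm{polylog}\,n)$-bounded; tracking the exponent of the logarithm through the rainbow absorbing-path machinery (two logarithmic factors from the two endpoints of each absorbing gadget, doubled by the need for a connector) is what forces the $\log^{4}n$ in the hypothesis. A secondary issue is synchronising the palettes and endpoints of the absorbers with the bulk cycles so that the union is rainbow and closed into a Hamilton cycle; this is dealt with by the colour-class deletion before applying Theorem~\ref{thm: near spanning cycle} and a small pool of reserved connecting edges.
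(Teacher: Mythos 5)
Your proposal is an absorption argument (reservoir $G_{\mathrm{res}}$, absorber paths $A_i$ with disjoint palettes, then patch the almost-spanning cycles from Theorem~\ref{thm: near spanning cycle}). This is genuinely different from the paper's route, which re-runs the $\log n$-part partition, resolvable-design and colour-splitting machinery of Theorem~\ref{thm: perfect decomp} with $F=C_s$, and then glues the $s$-cycles of each almost-spanning $C_s$-factor into a single Hamilton cycle via Theorem~\ref{blowup} applied inside a randomly reserved vertex subset $V_1(i,k)$ using a \emph{shared} reserve colour class $I_{q+1}$. Unfortunately, as written your approach contains a fatal gap rather than a workable alternative.

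The crux is the pairwise-disjoint palette requirement. Theorem~\ref{thm: near spanning cycle} is applied with a \emph{fixed} constant $\alpha_1>0$, so each bulk cycle $C_i$ misses up to $\alpha_1 n=\Theta(n)$ vertices; to absorb $S_i$ with $|S_i|=\Theta(n)$, the rainbow $x_i$--$y_i$ path through $V(A_i)\cup S_i$ must use at least $|S_i|=\Theta(n)$ distinct colours, so $|\Phi_i\cup\Phi_i^{+}|=\Theta(n)$ for each $i$. You need $t=\Theta(dn)$ Hamilton cycles, hence $\Theta(dn)$ absorbers, and you require their palettes $\Phi_i\cup\Phi_i^{+}$ to be pairwise disjoint, so you would need $\Theta(dn)\cdot\Theta(n)=\Theta(dn^2)$ colours in total. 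But a $\tfrac12(1-\alpha)dn$-bounded colouring of $G$ can have as few as $e(G)/\bigl(\tfrac12(1-\alpha)dn\bigr)=\Theta(n)$ colours overall, so the palettes cannot possibly be disjoint (indeed even a single absorber's palette may exhaust a constant fraction of all colours). The paper avoids exactly this obstruction by not insisting on palette disjointness: the reserve colours $I_{q+1}$ form a single $\gamma$-fraction of the colour set reused across every factor, with the blow-up lemma's boundedness requirement satisfied not by disjointness but by concentration estimates (properties $\textbf{A}$, $\textbf{B}$, $\mathbf{C_i}$--$\mathbf{E_i}$) that bound how many edges of each reserve colour land in the small window $W(i,k)$.

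A secondary inconsistency: you describe each $A_i$ as having ``length $o(n)$'' yet able to absorb up to $\alpha_1 n=\Theta(n)$ vertices. Standard absorber constructions (and the gadget-concatenation you sketch, one local gadget per vertex) force $|V(A_i)|=\Omega(|S_i|)=\Omega(n)$, so the size estimate is off by a polynomial factor, which in turn invalidates the claim that the pieces of $G_{\mathrm{res}}$ hosting the absorber construction are of ``polylogarithmic size''. Even if you corrected the sizes, the colour-counting obstruction above remains; a repair would have to share the spare colour pool across all the absorbers, track how often each shared colour is touched per cycle, and control edge-disjointness under reuse, at which point one is essentially reconstructing the paper's reserved colour class $I_{q+1}$ together with its martingale bookkeeping.
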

We will discuss multipartite analogues of our results in Section~\ref{concl}. (Recall that the bipartite case is of particular interest, as such results can be translated into the setting of arrays.)
There are numerous open problems that arise from the above results: in particular, it is natural to seek decompositions into more general rainbow structures such as regular spanning graphs of bounded degree. It would also be very desirable to obtain improved error terms or even exact results.

The remainder of this paper is organized as follows. In Section~\ref{sec3} we collect the necessary definitions and auxiliary results, some of which are new and may be of independent interest (in particular, we prove a result on matchings in not
necessarily regular hypergraphs). In Section~\ref{sec5}, we prove Theorems~\ref{thm: approx decomp} and~\ref{thm: near spanning cycle}. In Section~\ref{sec6} we prove Theorems~\ref{thm: perfect decomp} and~\ref{thm: spanning cycle}. In Section~\ref{concl}, we add some concluding remarks. In the appendix we prove the rainbow counting lemma, which plays an important role in the proofs.

\section{Preliminaries}\label{sec3}
In this section, we introduce and derive several key tools that we will need later on: in particular, we state the rainbow blow-up lemma from \cite{GJ18} and derive a result on random matchings in (not necessarily regular) hypergraphs as well as two probabilistic partition results.

\subsection{Notation} 
In order to simplify the presentation, we omit floors and ceilings and treat large numbers as integers whenever this does not affect the argument. The constants in the hierarchies used to state our results have to be chosen from right to left. More precisely, if we claim that a result holds whenever $1/n \ll a \ll b \leq 1$ (where $n\in \N$ is typically the order of a graph), then this means that there are non-decreasing functions $f^* : (0, 1] \rightarrow (0, 1]$ and $g^* : (0, 1] \rightarrow (0, 1]$ such that the result holds for all $0 < a, b \leq 1 $ and all $n \in \mathbb{N}$ with $a \leq f^*(b)$ and $1/n \leq g^*(a)$. We will not calculate these functions explicitly.

The auxiliary hierarchy constants used in this paper will be denoted by the Greek letters from $\alpha$ to $\eta$ (reserved throughout for this purpose).
In what follows, $n$ is the number of vertices in a graph or a part of a multipartite graph;
$d$  stands for the density of a graph.
We use $i,j,k$, along with possible primes and subscripts, to index objects. We use letters $u,v,w$ to denote vertices and $e$ to denote graph edges. Colours are usually denoted by $c$ and the colouring itself by $\phi$, while capital $C$  (with possible subscripts) stands for various constants. We reserve other capital Latin letters except $N$ for different sets or graphs. In the case of graphs or sets, having a prime in the notation means that later in the proof/statement we refine this object by removing some exceptional elements (note that primes do not have this meaning for the indexing variables). Of course, a double prime will then mean that we remove the exceptional elements in two stages. Calligraphic letters will stand for collections of sets, such as partitions or hypergraphs.

All graphs considered in this paper are simple. However, we allow our hypergraphs to have multiple edges. We use standard notations $V(\cdot)$ and $E(\cdot)$ for  vertex and edge sets of graphs and hypergraphs. The number of edges in a graph $G$ is denoted by $e(G)$. For a vertex set $U$ and an edge set $E$, we denote by $G\setminus U$ the graph we obtain from $G$ by deleting all vertices in $U$ and $G-E$ denotes the graph we obtain from $G$ by deleting all edges in $E$.  For a set $U\subseteq V(G)$ and $u,v\in V(G)$, we put
\begin{align*}
d_{G,U}(u):= |N_{G}(u)\cap U| \ \ \ \ \ \ \text{and} \ \ \ \ \
\ d_{G,U}(u,v) &:= |N_{G}(u,v)\cap U|.
\end{align*}
For a graph $G$ and two disjoint sets $U,V\subseteq V(G)$, let
$G[U,V]$ denote  the graph with  vertex set $U\cup V$ and  edge set $\{uv \in E(G) : u\in U, v\in V\}$.
More generally, given disjoint sets $U_1,\ldots, U_k\subseteq V(G)$, we define the $k$-partite subgraph $G[U_1,\ldots, U_k]$ of $G$ in a similar way.
We denote by $P_k$ a path with $k$ edges.

Since in this paper we deal with edge-colourings only, we simply refer to them as {\it colourings}. For shorthand, we call a colouring $\phi: E(G)\rightarrow [m]$ of $G$ in $m$ colours an \emph{$m$-colouring} of $G$. We denote by $G(\phi,c)$ the spanning subgraph of $G$ that contains all its edges of colour $c$ in $\phi$. 
More generally, for a set $I\subseteq [m]$, we put $G(\phi,I)=\bigcup_{c\in I} G(\phi,c)$. An $m$-colouring $\phi$ is {\it $g$-bounded} if and only if $e(G(\phi,c))\leq g$ for each $c\in [m]$ and  is \emph{locally $\ell$-bounded} if and only if $\Delta(G(\phi,c))\leq \ell$ for each $c\in [m]$. We say that $\phi$ is \emph{$(g,\ell)$-bounded} if it is $g$-bounded and locally $\ell$-bounded.



\subsection{Probabilistic tools}
In this section, we collect the large deviation results we need.
\begin{lemma}[Chernoff-Hoeffding's inequality, see \cite{JLR00}] \label{Chernoff}
Suppose that $X_1,\dots, X_N$ are independent random variables taking values $0$ or $1$. Let $X=\sum_{i\in [N]} X_i$.
Then $$\mathbb{P}[|X - \mathbb{E}[X]| \geq t] \leq 2e^{-\frac{t^2}{2(\mathbb{E}[X]+t/3)}}.$$
\end{lemma}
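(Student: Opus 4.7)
The plan is to use the standard exponential moment (Bernstein/Chernoff) argument, handling the upper and lower tails separately and combining them by a union bound; this is a textbook proof, which is why the paper simply quotes the result from \cite{JLR00}.

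For the upper tail $\mathbb{P}[X - \mu \geq t]$, where $\mu := \mathbb{E}[X]$, I would apply Markov's inequality to $e^{\lambda X}$ for a parameter $\lambda > 0$ to be optimized later, obtaining $\mathbb{P}[X - \mu \geq t] \leq e^{-\lambda t}\,\mathbb{E}[e^{\lambda(X-\mu)}]$. Independence of the $X_i$ lets the moment generating function factor, and writing $p_i := \mathbb{E}[X_i]$ the two-point distribution of $X_i$ combined with $1+x\leq e^x$ yields $\mathbb{E}[e^{\lambda(X_i - p_i)}] \leq \exp(p_i(e^\lambda - 1 - \lambda))$. Multiplying over $i$ and summing the exponents gives $\mathbb{P}[X - \mu \geq t] \leq \exp(-\lambda t + \mu(e^\lambda - 1 - \lambda))$.

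The next step is to optimize in $\lambda$. Setting $\lambda = \log(1 + t/\mu)$ minimizes the exponent and produces the Cram\'er-type inequality $\exp(-\mu\,\psi(t/\mu))$, where $\psi(u) := (1+u)\log(1+u) - u$. To convert this into the stated form, I would invoke the elementary inequality $\psi(u) \geq \frac{u^2}{2(1+u/3)}$ valid for $u \geq 0$, which can be verified by comparing power-series expansions (or equivalently by showing that the difference of the two sides is nondecreasing in $u$ and vanishes at $0$). Substituting $u = t/\mu$ then gives exactly $\mathbb{P}[X - \mu \geq t] \leq \exp\bigl(-t^2/(2(\mu + t/3))\bigr)$.

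For the lower tail $\mathbb{P}[\mu - X \geq t]$ the same argument applied to $-X$ with $\lambda > 0$ produces the analogous bound $\exp(-\mu\,\widetilde\psi(t/\mu))$, where $\widetilde\psi(u) = (1-u)\log(1-u) + u$ on $[0,1]$; an elementary check shows $\widetilde\psi(u) \geq u^2/2$, which is already stronger than the required $u^2/(2(1+u/3))$, so the case $t > \mu$ is trivial and the case $t\leq \mu$ is covered. A union bound over the two tails accounts for the factor $2$ in the statement. The only piece of genuine content is the elementary convex-analysis inequality for $\psi$; the main (and minor) obstacle is resisting the temptation to dress up what is otherwise a mechanical unwinding of the exponential-moment method.
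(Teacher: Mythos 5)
Your proof is correct and is the standard exponential-moment (Bernstein/Chernoff) argument: Markov's inequality applied to $e^{\lambda X}$, factorisation of the moment generating function via independence, the bound $1+x\le e^x$ to get a one-parameter exponential upper bound, optimisation in $\lambda$ to obtain the Cram\'er/Bennett form with the function $\psi(u)=(1+u)\log(1+u)-u$, the elementary inequality $\psi(u)\ge u^2/(2(1+u/3))$, and a union bound over the two tails. The paper does not prove this lemma but cites it from \cite{JLR00}, and the proof given there is essentially the one you describe, so there is nothing to compare beyond agreement.
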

In particular, if $t\ge 7\mathbb \EXP[X]$, then $\mathbb{P}[|X - \mathbb{E}[X]| \geq t] \leq 2e^{-t}$.

We shall need two large deviation results for martingales.
\begin{theorem}[Azuma's inequality \cite{Azu67}]\label{Azuma}
Suppose that $\lambda>0$ and let $X_0,\dots, X_N$ be a martingale such that
$|X_{i}- X_{i-1}|\leq \vartheta_i$ for all $i\in [N]$.
Then
\begin{align*}
\mathbb{P}[\left|X_N-X_0\right|\geq \lambda]\leq 2e^{\frac{-\lambda^2}{2\sum_{i\in [N]}\vartheta_i^2}}.
\end{align*}
\end{theorem}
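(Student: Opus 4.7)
The plan is to prove this by the standard exponential moment (Bernstein/Chernoff) method applied to the martingale differences. First I would reduce the two-sided bound to a one-sided bound: it suffices to show $\Pro[X_N - X_0 \ge \lambda] \le e^{-\lambda^2/(2\sum_i \vartheta_i^2)}$, since applying the same argument to the martingale $(-X_i)_i$ (which has the same increment bounds) handles the other tail, and a union bound yields the factor of $2$.

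Next, for an arbitrary parameter $t > 0$ I would apply Markov's inequality to the nonnegative random variable $e^{t(X_N - X_0)}$ to get
\begin{equation*}
\Pro[X_N - X_0 \ge \lambda] \le e^{-t\lambda}\, \EXP\!\left[ e^{t(X_N - X_0)} \right] = e^{-t\lambda}\, \EXP\!\left[ \prod_{i=1}^N e^{t(X_i - X_{i-1})} \right].
\end{equation*}
Writing $D_i := X_i - X_{i-1}$ and conditioning successively on the natural filtration $\mathcal F_{i-1}$ generated by $X_0, \dots, X_{i-1}$, the martingale property gives $\EXP[D_i \mid \mathcal F_{i-1}] = 0$, while the hypothesis gives $|D_i| \le \vartheta_i$ almost surely. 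The key analytic step is then Hoeffding's lemma: for any zero-mean random variable $Y$ with $|Y|\le \vartheta$, one has $\EXP[e^{tY}] \le e^{t^2\vartheta^2/2}$. I would prove this by writing $Y$ as a convex combination of the endpoints $\pm \vartheta$ (using convexity of $y \mapsto e^{ty}$) and then optimizing, or equivalently by a Taylor expansion of the cumulant generating function.

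Applied conditionally, Hoeffding's lemma yields $\EXP[e^{tD_i} \mid \mathcal F_{i-1}] \le e^{t^2 \vartheta_i^2 / 2}$ almost surely. Iterating via the tower property,
\begin{equation*}
\EXP\!\left[\prod_{i=1}^N e^{tD_i}\right] \le \exp\!\left( \frac{t^2}{2}\sum_{i=1}^N \vartheta_i^2 \right),
\end{equation*}
and combining with the Markov bound I obtain $\Pro[X_N - X_0 \ge \lambda] \le \exp(-t\lambda + t^2 S/2)$, where $S := \sum_{i=1}^N \vartheta_i^2$. Finally I would optimize in $t$ by choosing $t = \lambda/S$, which yields the bound $\exp(-\lambda^2/(2S))$, completing the one-sided estimate and hence the theorem after the symmetric argument.

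The only genuinely nontrivial step is Hoeffding's lemma; the rest is a bookkeeping exercise with conditional expectations and calculus. Since this is a classical tool that the paper simply cites, I would in fact just invoke it directly rather than re-prove it.
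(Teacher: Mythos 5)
Your proof is correct and is the standard derivation of Azuma's inequality. Note, however, that the paper does not prove this result at all---it states it as a known theorem with a citation to Azuma's 1967 paper---so there is no ``paper's proof'' to compare against. Your argument (reduction to the one-sided tail, Markov applied to $e^{t(X_N-X_0)}$, conditional Hoeffding's lemma giving $\EXP[e^{tD_i}\mid\mathcal F_{i-1}]\le e^{t^2\vartheta_i^2/2}$, telescoping via the tower property, and optimizing $t=\lambda/\sum_i\vartheta_i^2$) is exactly the classical proof, and your closing remark that one would simply cite it rather than re-prove it is the choice the authors actually made.
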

\begin{theorem}[\cite{CK}, Theorems 6.1 and 6.5] \label{Azuma2}
Suppose that $\lambda>0$ and let $X_0,\dots, X_N$ be a martingale such that
$|X_{i}- X_{i-1}|\leq \vartheta$ and $\mathrm{Var}[X_i\mid X_0,\ldots,X_{i-1}]\le \sigma_i^2$ for all $i\in [N]$.
Then
\begin{align*}
\mathbb{P}[\left|X_N-X_0\right|\geq \lambda]\leq 2e^{\frac{-\lambda^2}{2\sum_{i\in [N]}\sigma_i^2+\lambda \vartheta}}.
\end{align*}
\end{theorem}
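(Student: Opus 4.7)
The plan is to prove this by the classical exponential--moment (Bennett/Freedman) method. Write $D_i := X_i - X_{i-1}$, so that the martingale hypothesis gives $\EXP[D_i\mid \mathcal{F}_{i-1}]=0$, while the assumptions supply $|D_i|\le \vartheta$ and $\EXP[D_i^2\mid \mathcal{F}_{i-1}]\le \sigma_i^2$. By applying the result also to $-X_i$ and taking a union bound, it suffices to prove the one--sided estimate
\[
\Pro[X_N-X_0\ge \lambda]\le \exp\!\Big(\tfrac{-\lambda^2}{2\sum_i \sigma_i^2+\lambda\vartheta}\Big).
\]
Markov's inequality applied to $e^{t(X_N-X_0)}$ for an arbitrary parameter $t>0$ gives
\[
\Pro[X_N-X_0\ge \lambda]\le e^{-t\lambda}\,\EXP\Big[\prod_{i=1}^N e^{t D_i}\Big],
\]
so the task reduces to controlling the conditional moment generating function of each $D_i$.

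The key analytic step is the pointwise inequality
\[
e^{x}\le 1+x+\tfrac{x^2/2}{1-x/3}\qquad\text{for }x\le 3,
\]
which follows by comparing the power series term by term (each coefficient of $x^k$ on the right dominates $1/k!$ for $k\ge 2$). Applied with $x=tD_i$ (assuming $t\vartheta\le 3$) and then taking conditional expectation, the linear term vanishes by the martingale property and one obtains
\[
\EXP[e^{tD_i}\mid \mathcal{F}_{i-1}]\le 1+\frac{t^2\sigma_i^2/2}{1-t\vartheta/3}\le \exp\!\Big(\frac{t^2\sigma_i^2/2}{1-t\vartheta/3}\Big).
\]
Multiplying these bounds in order via the tower property yields
\[
\EXP\!\Big[\prod_{i=1}^N e^{tD_i}\Big]\le \exp\!\Big(\frac{t^2 V/2}{1-t\vartheta/3}\Big),\qquad V:=\sum_{i=1}^N\sigma_i^2,
\]
and hence $\Pro[X_N-X_0\ge \lambda]\le \exp\!\big(-t\lambda+\tfrac{t^2 V/2}{1-t\vartheta/3}\big)$.

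It then remains to optimise in $t$. Choosing $t:=\lambda/(V+\lambda\vartheta/3)$, which satisfies $t\vartheta<3$, a short computation substitutes back to give an exponent of $-\lambda^2/(2V+2\lambda\vartheta/3)$, which is stronger than the claimed $-\lambda^2/(2V+\lambda\vartheta)$; combining with the symmetric bound for $-X$ gives the factor of $2$ in front. I expect the main obstacle to be simply bookkeeping: verifying the elementary bound on $e^x$ used above, checking that the chosen $t$ is in the admissible range, and handling the degenerate cases (for instance when all $\sigma_i=0$ or when $\lambda$ is so large that $t\vartheta$ approaches $3$, in which case one uses Azuma (Theorem~\ref{Azuma}) instead). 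The argument is otherwise routine and standard.
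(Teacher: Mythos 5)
The paper does not prove this statement --- it is quoted directly from Chung--Lu (their Theorems 6.1 and 6.5) --- so there is no in-paper argument to compare against. Your proof is the standard Bernstein/Freedman exponential-moment argument, which is indeed what the cited source does, and the main steps (reduction to a one-sided tail, the conditional MGF bound, the tower-property telescoping, the choice $t=\lambda/(V+\lambda\vartheta/3)$) are all correct; your final exponent $-\lambda^2/(2V+2\lambda\vartheta/3)$ is in fact slightly sharper than the stated $-\lambda^2/(2V+\lambda\vartheta)$.

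One small gap in the justification: you derive the elementary bound $e^x\le 1+x+\frac{x^2/2}{1-x/3}$ by term-by-term comparison of power series, but this comparison only directly proves the inequality for $x\ge 0$, since for $x<0$ dominance of individual coefficients does not transfer to the sums. You apply the bound with $x=tD_i$, and $D_i$ certainly takes negative values, so the negative range matters. The inequality is nevertheless true for all $x<3$: for instance, setting $f(x):=1+x+\frac{x^2/2}{1-x/3}-e^x$ one computes $f(0)=f'(0)=0$ and $f''(x)=(1-x/3)^{-3}-e^x\ge 0$ for $x\le 0$ (equivalent to $\ln(1+u)\le u$ with $u=-x/3$), whence $f\ge 0$ on $(-\infty,0]$; together with your series argument on $[0,3)$ this closes the gap. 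Everything else is routine and sound.
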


\subsection{Regularity} In this part, we discuss the relation between quasirandomness and superregularity, as well as collect some tools to deal with ``exceptional'' pairs of vertices that have high codegree.

We say that a bipartite graph $G$ with parts $U,V$ is \emph{$(\epsilon,d)$-regular} if for all sets $X\subseteq U$, $Y\subseteq V$ with  $|X|\geq \epsilon |U|$, $|Y|\geq \epsilon |V|$ we have
\begin{align*}
	\Big| \frac{e(G[X,Y])}{|X||Y|} - d \Big| \leq \epsilon.
	\end{align*}
If $G$ is $(\epsilon,d)$-regular and $d_{G}(u)= (d \pm \epsilon)|V|$ for all $u\in U$, $d_{G}(v) = (d \pm \epsilon)|U|$ for all $v\in V$, then we say that $G$ is {\em $(\epsilon,d)$-superregular}. We remark that, although the notions of $\eps$-superregularity and  $(\eps,d)$-quasirandomness imply very similar properties of graphs, it is much handier to use the first one for bipartite graphs and the second one for general graphs. The following observation follows directly from the definitions, so we omit the proof.

\begin{proposition}\label{prop: edge deletion regular}
Suppose $1/n \ll \epsilon\ll \delta \ll d \leq 1$. \begin{itemize}\item[(i)]
If $G'$ is an $(\epsilon,d)$-regular bipartite graph with vertex partition $V_1, V_2$ with $|V_1|, |V_2|\geq n$
and $E\subseteq E(G')$ is a set of edges with $|E| \leq \epsilon n^2$, then $G'-E$ is $(\delta,d)$-regular.
\item[(ii)] Suppose $G'$ is an $(\epsilon,d)$-quasirandom $n$-vertex graph, $E\subseteq E(G')$ is a set of edges with $|E| \leq \epsilon n^2$ and $V\subseteq V(G')$ is a set of vertices with $|V|\le \eps n$. Then $(G'\setminus V)-E$ contains a $(\delta,d)$-quasirandom subgraph $G$ on at least $(1-\delta)n$ vertices.
 \end{itemize}
\end{proposition}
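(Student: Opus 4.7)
The plan is that both statements follow by tracking how small perturbations — removing at most $\epsilon n$ vertices and/or $\epsilon n^2$ edges — propagate through the density, degree and codegree definitions, exploiting $\epsilon \ll \delta$. Neither part requires an idea beyond bookkeeping, which is presumably why the authors omit the proof.

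For (i), I would first note that any test sets $X \subseteq V_1$, $Y \subseteq V_2$ meeting the $\delta$-threshold automatically satisfy the $\epsilon$-threshold (since $\delta \gg \epsilon$), so $(\epsilon,d)$-regularity of $G'$ gives $e(G'[X,Y]) = (d \pm \epsilon)|X||Y|$. Removing the edge set $E$ perturbs this count by at most $|E| \leq \epsilon n^2$, and dividing through by $|X||Y| \geq \delta^2 n^2$ shifts the density by at most $\epsilon/\delta^2 \ll \delta$. Combined with the initial error $\epsilon$, this yields $(\delta,d)$-regularity of $G'-E$.

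For (ii), the idea is to identify a small auxiliary set $B$ of ``$E$-heavy'' vertices to throw away together with $V$, so that what survives has the correct degrees and few bad codegree pairs. Let $B := \{u \in V(G')\setminus V : d_E(u) \geq \sqrt{\epsilon}\,n\}$; double-counting $\sum_u d_E(u) \leq 2|E| \leq 2\epsilon n^2$ gives $|B| \leq 2\sqrt{\epsilon}\,n$. Set $G := (G'\setminus(V\cup B))-E$, so $|V(G)| \geq (1-\delta)n$. For any $u \in V(G)$, the decomposition
\[ d_G(u) = d_{G'}(u) - |N_{G'}(u) \cap (V\cup B)| - d_E(u), \]
together with $|V \cup B| \leq 3\sqrt{\epsilon}\,n$ and $d_E(u) < \sqrt{\epsilon}\,n$ (since $u \notin B$), yields $d_G(u) = dn \pm O(\sqrt{\epsilon})\,n$, which rewrites as $(d \pm \delta)|V(G)|$ once $\sqrt{\epsilon} \ll \delta$. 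An analogous decomposition for codegrees, using $d_E(u), d_E(v) < \sqrt{\epsilon}\,n$, shows that any pair $\{u,v\} \subseteq V(G)$ satisfying $d_{G'}(u,v) = (d^2 \pm \epsilon)n$ also satisfies $d_G(u,v) = (d^2 \pm \delta)|V(G)|$. Hence the set of bad codegree pairs in $G$ is contained in the set of bad codegree pairs of $G'$, which has size at most $\epsilon n^2 \leq \delta|V(G)|^2$.

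The only real design choice is the intermediate threshold $\sqrt{\epsilon}$ in the definition of $B$: it is calibrated so that both $|B|$ and the per-vertex $E$-contribution on $V(G)$ are simultaneously $o(n)$ and compatible with $\sqrt{\epsilon} \ll \delta$. There is no genuine obstacle.
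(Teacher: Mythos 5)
Your proof is correct, and it is precisely the routine bookkeeping the authors had in mind when they wrote that the proposition ``follows directly from the definitions.'' Both parts are handled by the right elementary moves: in (i), comparing $\eps/\delta^2$ with $\delta$ after removing $E$; in (ii), pruning the $E$-heavy vertices $B$ (with $|B|\le 2\sqrt{\eps}\,n$ by double-counting) so that both degrees and codegrees of surviving pairs shift by only $O(\sqrt{\eps})\,n$, and then rescaling from $n$ to $|V(G)|$, which the hierarchy $\sqrt{\eps}\ll\delta$ absorbs.
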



In quasirandom graphs defined as in \eqref{eq: irreg} there are exceptional pairs of vertices that have ``incorrect'' codegree. Similarly, in locally bounded colourings some pairs of vertices have large ``monochromatic codegree''. To deal with such exceptional pairs of vertices we introduce {\it irregularity graphs}.

For an $n$-vertex graph $G$, we define the {\it irregularity graph} $\Ir_G(\epsilon,d)$ to be the graph on $V(G)$ and whose edge set is as defined in \eqref{eq: irreg}, i.e. $uv\in E(\Ir_G(\eps,d))$ if and only if $d_G(u,v)\ne (d^2\pm \eps) n$. Similarly, for a partition $\cV$ of $V(G)$ into subsets $V_1,\dots,V_r$, we let
$\Ir_{G,\cV}(\epsilon,d)$ be the graph on $V(G)$ with the edge set
$$\Big\{uv \in \binom{V(G)}{2}: u\in V_j, v\in V_{j'},  d_{G}(u,v)\neq (d^2\pm \epsilon)|V_{j''}| \text{ for some } j''\in [r]\setminus\{j,j'\}\Big\}.$$ (Here we allow $j=j'$.)

\begin{theorem}\cite{DLR95} \label{thm: almost quasirandom}
Suppose $0<1/n\ll\epsilon \ll \alpha,d \leq 1$.
Suppose that $G$ is a bipartite graph with a vertex partition $\cV=(U,V)$ such that $n=|U| \leq |V|$.
If $e(\Ir_{G,\cV}(\epsilon,d))\leq \epsilon n^2$ and $d(u)=(d\pm \eps)|V|$ for all but at most $\eps n$ vertices $u\in U$, then $G$ is $(\epsilon^{1/6},d)$-regular.\COMMENT{This is weaker than the result in \cite{DLR95} since they only need that almost all pairs in $U$ have the right codegree (e.t. they do not need anything for the pairs in $V$.}
\end{theorem}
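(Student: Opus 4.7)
The plan is a standard second-moment argument. Fix arbitrary $X\subseteq U$ with $|X|\ge \eps^{1/6}|U|$ and $Y\subseteq V$ with $|Y|\ge \eps^{1/6}|V|$; the task is to show $e(G[X,Y])=(d\pm \eps^{1/6})|X||Y|$. The natural auxiliary quantity is the function $d_X\colon V\to \N$ with $d_X(v):=|N_G(v)\cap X|$, so that $e(G[X,Y])=\sum_{v\in Y}d_X(v)$.

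First I would compute the first moment $\sum_{v\in V}d_X(v)=\sum_{u\in X}d_G(u)$: since $|X|\ge \eps^{1/6}n$ while at most $\eps n$ vertices of $U$ have the wrong degree, these exceptional vertices contribute at most an $O(\eps^{5/6})$-fraction of the sum, so $\sum_v d_X(v)=(d\pm O(\eps^{5/6}))|X||V|$. Next, the second moment $\sum_v d_X(v)^2=\sum_{u,u'\in X}d_G(u,u')$ (where we set $d_G(u,u):=d_G(u)$); the diagonal contributes only $O(|X||V|)$, the off-diagonal pairs outside $\Ir_{G,\cV}(\eps,d)$ each contribute $(d^2\pm \eps)|V|$, and the at most $\eps n^2$ irregular pairs contribute in total at most $\eps n^2|V|$, which is $O(\eps^{2/3})|X|^2|V|$ because $|X|^2\ge \eps^{1/3}n^2$. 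Hence $\sum_v d_X(v)^2=(d^2\pm O(\eps^{2/3}))|X|^2|V|$.

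Expanding $\sum_v(d_X(v)-d|X|)^2$ via these two moments produces a variance bound of $O(\eps^{2/3})|X|^2|V|$. By Markov's inequality, the set $B:=\{v\in V:|d_X(v)-d|X||\ge \eps^{1/6}|X|\}$ satisfies $|B|\le O(\eps^{1/3})|V|$, so $|B\cap Y|\le O(\eps^{1/6})|Y|$. Splitting $e(G[X,Y])=\sum_{v\in Y\setminus B}d_X(v)+\sum_{v\in Y\cap B}d_X(v)$ and using $d_X(v)=d|X|\pm \eps^{1/6}|X|$ on the first part and the trivial bound $d_X(v)\le |X|$ on the second yields $e(G[X,Y])=(d\pm O(\eps^{1/6}))|X||Y|$. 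The implicit constant can be absorbed by running the whole argument with thresholds $\eps^{1/7}$ instead of $\eps^{1/6}$ wherever necessary, yielding the stated $(\eps^{1/6},d)$-regularity.

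No substantive obstacle arises; the argument is essentially a Chebyshev-style consequence of the first- and second-moment estimates. The only care needed is the bookkeeping of the three error terms $\eps^{5/6}$ (degree correction), $\eps^{2/3}$ (irregularity pairs), and $\eps^{1/6}$ (Chebyshev deviation), and checking that $|X|\ge \eps^{1/6}n$ suffices to dilute the $\eps n$ bad-degree vertices and $\eps n^2$ irregular pairs to the stated fractions---which is precisely what the assumption $|U|\le |V|$ guarantees, since all exceptional counts are measured against $n=|U|$ rather than the potentially larger $|V|$. It is worth noting that only codegrees of pairs within $U$ enter the calculation; those within $V$ play no role.
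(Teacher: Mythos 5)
The paper cites this statement to [DLR95] without reproducing a proof, and your argument is the standard second-moment route that underlies such codegree-to-regularity statements. The moment computations themselves are sound: exceptional degrees and exceptional pairs are both measured against $n=|U|$, so $|X|\ge\eps^{1/6}n$ and $|X|^2\ge\eps^{1/3}n^2$ dilute them to $O(\eps^{5/6})$ and $O(\eps^{2/3})$ fractions respectively, and you correctly observe that only codegrees within $U$ enter.

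The gap is at the very end. As you yourself note, the Markov-plus-split step delivers only $e(G[X,Y])=(d\pm C\eps^{1/6})|X||Y|$ for some absolute constant $C>1$: one factor comes from the bound $|B\cap Y|\le C'\eps^{1/6}|Y|$ and another from replacing $|d_X(v)-d|X||$ by the trivial $|X|$ on $B\cap Y$. Your proposed repair of ``running the argument with $\eps^{1/7}$ thresholds'' does not work: shifting the Chebyshev threshold in either direction away from $\eps^{1/6}$ strictly worsens one of the two error terms, because $\eps^{1/6}$ is exactly where the trade-off $\theta + \eps^{1/2-2\theta-1/6}$ is balanced, so the naive split cannot beat $(1+C)\eps^{1/6}$. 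The clean fix is to drop the exceptional set and apply Cauchy--Schwarz directly:
\begin{align*}
\big|e(G[X,Y])-d|X||Y|\big| \le \sqrt{|Y|}\Big(\sum_{v\in V}\big(d_X(v)-d|X|\big)^2\Big)^{1/2}
\le \sqrt{|Y|}\cdot\sqrt{C_1\eps^{2/3}|X|^2|V|} \le \sqrt{C_1}\,\eps^{1/4}|X||Y|,
\end{align*}
using $|V|\le\eps^{-1/6}|Y|$ in the last step; since $\sqrt{C_1}\,\eps^{1/4}\le\eps^{1/6}$ once $\eps$ is small enough (which the hierarchy permits), this yields $(\eps^{1/6},d)$-regularity outright. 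Alternatively, keep your set $B$ but with threshold $\eps^{1/6}/2$ and apply Cauchy--Schwarz only to the sum over $B\cap Y$, which makes that contribution $O(\eps^{1/3})|X||Y|$ and hence negligible against $\eps^{1/6}|X||Y|/2$.
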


The following lemma is an easy consequence of Theorem~\ref{thm: almost quasirandom} and the definition of $\epsilon$-superregularity. Thus we omit the proof.
\begin{lemma}\label{lem: irregular degree}
Suppose $0<1/n\ll\epsilon \ll 1/r, \alpha,d \leq 1$.
\begin{itemize}\item[(i)] Suppose that $G$ is an $n$-vertex, $(\epsilon,d)$-quasirandom graph. Then
$\Delta(\Ir_{G}(\epsilon^{1/10},d))\leq  \epsilon^{1/10} n$.
\item[(ii)] Suppose that $\cV=(V_1,\dots, V_r)$ is a partition of $G$ such that $n\leq |V_i| \leq \alpha^{-1}n$ for each $i\in[r]$ and $G[V_i,V_j]$ is $(\epsilon,d)$-superregular for all $i\neq j\in [r]$. Then $\Delta(\Ir_{G,\cV}(\epsilon^{1/10},d))\leq \epsilon^{1/10} n$.
    \end{itemize}
\end{lemma}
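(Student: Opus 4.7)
The plan is to prove both parts by the same second-moment (Chebyshev-style) argument applied to codegrees.

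\textbf{Part (i).} Fix a vertex $v$. I compute the first two moments of $d(u,v)$ as $u$ ranges over $V(G)$. Since $d(w)=(d\pm\eps)n$ for every $w$, the first moment is
\[
\sum_u d(u,v) \;=\; \sum_{w\in N(v)} d(w) \;=\; (d^2\pm O(\eps))\, n^2.
\]
For the second moment, $\sum_u d(u,v)^2 = \sum_{w_1,w_2\in N(v)} d(w_1,w_2)$; by $(\eps,d)$-quasirandomness, at most $\eps n^2$ pairs $(w_1,w_2)$ violate $d(w_1,w_2)=(d^2\pm\eps)n$, and these contribute at most $\eps n^3$ to the sum, so $\sum_u d(u,v)^2=(d^4\pm O(\eps))n^3$. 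Expanding the square gives
\[
\sum_u (d(u,v)-d^2n)^2 \;=\; O(\eps)\,n^3.
\]
If $v$ had more than $\eps^{1/10}n$ neighbours in $\Ir_G(\eps^{1/10},d)$, each such neighbour would contribute at least $(\eps^{1/10}n)^2$ to this sum, giving a total of at least $\eps^{3/10}n^3$, which contradicts the variance bound since $\eps^{3/10}\gg\eps$.

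\textbf{Part (ii).} I run the same argument inside the partition. Fix $v\in V_j$ and a pair $(j',j'')\in[r]^2$ with $j''\ne j,j'$, and consider $d_{V_{j''}}(u,v)$ for $u\in V_{j'}$. Bipartite superregularity of $G[V_j,V_{j''}]$ and $G[V_{j'},V_{j''}]$ yields
\[
\sum_{u\in V_{j'}} d_{V_{j''}}(u,v) \;=\; \sum_{w\in N(v)\cap V_{j''}} d_{V_{j'}}(w) \;=\; d^2\,|V_{j'}||V_{j''}|\bigl(1\pm O(\eps)\bigr).
\]
For the second moment, $\sum_u d_{V_{j''}}(u,v)^2 = \sum_{w_1,w_2\in N(v)\cap V_{j''}} d_{V_{j'}}(w_1,w_2)$; a standard consequence of $(\eps,d)$-regularity of $G[V_{j'},V_{j''}]$ is that all but $O(\eps^{1/2})|V_{j''}|^2$ pairs $w_1,w_2\in V_{j''}$ satisfy $d_{V_{j'}}(w_1,w_2)=(d^2\pm O(\eps^{1/2}))|V_{j'}|$. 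Hence the second moment is $d^4|V_{j'}||V_{j''}|^2\bigl(1\pm O(\eps^{1/2})\bigr)$, and the variance is $O(\eps^{1/2})|V_{j'}||V_{j''}|^2$. Arguing as in Part (i), the number of $u\in V_{j'}$ with $d_{V_{j''}}(u,v)\ne(d^2\pm\eps^{1/10})|V_{j''}|$ is at most $O(\eps^{3/10}\alpha^{-1})\,n$. Summing over the at most $r^2$ choices of $(j',j'')$ and using $\eps\ll 1/r,\alpha$ gives $\Delta(\Ir_{G,\cV}(\eps^{1/10},d))\le\eps^{1/10}n$.

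\textbf{Main point.} The one non-trivial input beyond the direct calculation is the codegree fact for $(\eps,d)$-regular pairs used in Part (ii), but this is a standard double-averaging consequence of regularity. The exponent $1/10$ in the statement is deliberately loose, leaving plenty of slack to absorb constants coming from $r$, $\alpha^{-1}$, and the $O(\eps^{1/2})$ loss from the codegree lemma.
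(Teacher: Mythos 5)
Your proof is correct and takes a genuinely different, more elementary route. The paper omits the proof, pointing instead to Theorem~\ref{thm: almost quasirandom} (the [DLR95] regularity criterion) and superregularity. The intended argument for (ii) fixes $v\in V_j$, sets $Y:=N(v)\cap V_{j''}$ (by superregularity $|Y|=(d\pm\eps)|V_{j''}|\geq\eps|V_{j''}|$), and applies $(\eps,d)$-regularity of $G[V_{j'},V_{j''}]$ directly to $Y$: all but $2\eps|V_{j'}|$ vertices $u\in V_{j'}$ then satisfy $|N(u)\cap Y|=(d\pm\eps)|Y|=(d^2\pm O(\eps))|V_{j''}|$, and summing over the at most $r^2$ pairs $(j',j'')$ finishes. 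For (i) the intended proof first duplicates $V(G)$ into a bipartite graph between two copies of the vertex set, invokes Theorem~\ref{thm: almost quasirandom} to conclude that this bipartite graph is $(O(\eps^{1/6}),d)$-regular, and then runs the part-(ii) argument. Your second-moment/Chebyshev computation is self-contained: it avoids Theorem~\ref{thm: almost quasirandom} entirely in part (i), and in part (ii) it replaces the direct ``apply regularity to the large set $Y$'' step by the codegree-concentration fact for regular pairs (which is itself usually proved by exactly that step, so the two are close in spirit). The trade-off is a weaker per-pair error exponent ($O(\eps^{3/10})$ versus $O(\eps)$), but as you note the slack in $\eps^{1/10}$ absorbs this, so everything checks out.
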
\COMMENT{
To prove (ii), for each $u\in V(G)$, we take a neighbourhood of $u$. Then only small portion of vertices have wrong number of neighbours in the neighbourhood of $u$.
This shows that $u$ has low degree in $\Ir_{G,\cV}(\epsilon^{1/10},d)$.
For the quasirandom case (i), we consider two duplicates of $V(G)$, and consider a bipartite graph $G'$ between duplicates, such that $N_{G'}(u)$ is exactly the duplicates of $N_{G}(u)$ on the other side. Then Theorem~\ref{thm: almost quasirandom} shows that $G'$ is $(2\epsilon^{1/6},d)$-regular. Now apply the argument for (ii) to $G'$.
}

For $u,v\in V(G)$ and a colouring $\phi$ of $G$, let $C_{G}^{\phi}(u,v):= \{ w\in N_{G}(u,v) : \phi(uw)=\phi(vw)\}$ and let $c_{G}^{\phi}(u,v)$ be its size, that is, the {\it monochromatic codegree of $u,v$}.

For a given colouring $\phi$ of $G$, we define the \emph{colour-irregularity} graph
$\Ir_{G}^{\phi}(\ell)$ to be the graph on vertex set $V(G)$ and edge set
$\{ uv \in \binom{V(G)}{2} : c^{\phi}_{G}(u,v) \geq \ell\}$.\COMMENT{It may contain a pair which is not an edge of $G$.}
In  words, we include a pair $uv$ in the edge set if there are at least $\ell$ choices of $w \in N_{G}(u,v)$ such that $\phi(uw)=\phi(vw)$.
\COMMENT{
The fact that $\Ir_{G}(\epsilon,d)\cup \Ir_{G}^{\phi}(\epsilon n)$ has maximum degree $o(n)$ is the fundamental reason why we can obtain our results.
}

\begin{lemma}\label{lem: colour irregular degree}
Let $\ell, n\in \N$.
If $\phi$ is a locally $\ell$-bounded colouring of an $n$-vertex graph $G$, then we have $\Delta(\Ir_{G}^{\phi}(\sqrt{\ell n})) \leq \sqrt{\ell n}$.
\end{lemma}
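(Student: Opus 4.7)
The plan is to double-count, from a fixed vertex $u$, the ordered triples $(v,w)$ with $w\in N_G(u,v)$ and $\phi(uw)=\phi(vw)$, and then apply a Markov-type argument.

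Fix a vertex $u\in V(G)$. The quantity I want to bound is
\[
 S(u):=\sum_{v\in V(G)\setminus\{u\}} c_G^{\phi}(u,v),
\]
which, by unwrapping the definition, counts exactly the pairs $(v,w)$ with $v\neq u$, $w\in N_G(u)\cap N_G(v)$ and $\phi(uw)=\phi(vw)$. I would swap the order of summation and count instead over $w\in N_G(u)$. For a fixed such $w$, let $c:=\phi(uw)$. Every $v$ contributing to $S(u)$ via this $w$ must satisfy $vw\in E(G)$ with $\phi(vw)=c$. Since $\phi$ is locally $\ell$-bounded at $w$, the total number of edges of colour $c$ incident to $w$ is at most $\ell$, and one of them is $uw$. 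Hence the number of eligible $v$'s is at most $\ell-1\le \ell$. Summing over $w$ gives
\[
 S(u)\;\le\; |N_G(u)|\cdot \ell\;\le\; \ell n.
\]

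Now I apply Markov's inequality to the sum $S(u)$. If $uv$ is an edge of $\Ir_G^{\phi}(\sqrt{\ell n})$, then by definition $c_G^{\phi}(u,v)\ge \sqrt{\ell n}$, so each such neighbour of $u$ contributes at least $\sqrt{\ell n}$ to $S(u)$. Therefore
\[
 d_{\Ir_G^{\phi}(\sqrt{\ell n})}(u)\;\le\; \frac{S(u)}{\sqrt{\ell n}}\;\le\; \frac{\ell n}{\sqrt{\ell n}}\;=\;\sqrt{\ell n}.
\]
Since $u$ was arbitrary, this yields $\Delta(\Ir_G^{\phi}(\sqrt{\ell n}))\le \sqrt{\ell n}$.

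There is no real obstacle here; the argument is essentially a one-line double count followed by Markov. The only thing to be careful about is that local $\ell$-boundedness must be invoked at the common neighbour $w$ (not at $u$ or $v$), since it is at $w$ that the two monochromatic edges $uw$ and $vw$ meet. The bound $\ell-1$ could be used in place of $\ell$ to get a marginally sharper conclusion, but since the statement asks only for $\sqrt{\ell n}$, the crude bound suffices.
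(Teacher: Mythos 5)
Your proof is correct and is essentially the same argument as the paper's, just packaged differently: the paper assumes a high-degree vertex and derives a contradiction via pigeonhole, whereas you compute the sum $S(u)=\sum_v c_G^{\phi}(u,v)$ directly and conclude by a Markov bound. Both rest on the same double count of pairs $(v,w)$ and on invoking local $\ell$-boundedness at the common neighbour $w$ — which you rightly flag as the crucial point.
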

\begin{proof}
Suppose that for some vertex $v$ there is a set $U$ of more than $\sqrt{\ell n}$ vertices $u$ such that $c^{\phi}_G(u,v)\geq \sqrt{\ell n}$.
For each $u\in U$, consider the set $C^{\phi}_G(u,v) \subseteq N_G(v)$, which is of size at least $\sqrt{\ell n}$. In total, we have more than $\sqrt{\ell n}$ such sets of size $\sqrt{\ell n}$, and thus there exists a vertex $w\in N_G(v)$ which belongs to $C^{\phi}_G(u,v)$ for more than  $\ell n/d_{G}(v)\geq \ell$ vertices $u\in U$.
Take some $\ell+1$ of these vertices, say, $u_1\ldots, u_{\ell +1}$.  We have $\phi(u_iw)= \phi(u_jw)$ for all $i,j\in[\ell+1]$, which contradicts the assumption that $\phi$ is locally $\ell$-bounded.
\end{proof}

\subsection{Counting rainbow subgraphs}


In the proof of Theorems~\ref{thm: approx decomp} and~\ref{thm: perfect decomp}, we deal with rainbow $F$-factors. The proofs of these theorems rely on a hypergraph-matching result in the spirit of the R\"odl nibble and the Pippenger-Spencer theorem (Theorem~\ref{lem: Pippenger} below). To make the transition from hypergraphs to coloured graphs, roughly speaking, we associate a hyperedge with each rainbow copy of $F$. We will need to ensure that the degree and codegree conditions hold for the auxiliary hypergraph in order for the nibble machinery to work. Therefore, we need certain results that will allow us to estimate the number of rainbow copies of $F$ in a quasirandom (or superregular) graph~$G$.

For given graphs $F,G$, a subgraph $H$ of $G$ and a colouring $\phi$ of $G$, we denote by $R_G^\phi(F,H)$  the collection of $\phi$-rainbow subgraphs $\bar F$ of $G$ that are isomorphic\COMMENT{$\bar F\simeq F$.} to $F$ and contain $H$ as an induced subgraph.\COMMENT{$H\subseteq \bar F$. Note that $\bar F, H$ are subgraphs of $G$ while $F$ is a graph disjoint from $G$.} Normally, $\phi$ is obvious from the context, so we often omit it from the notation.

For a vertex partition $\cX=\{X_1,\dots, X_{r'}\}$\COMMENT{Not necessarily a proper colouring} of $F$ and a collection $\cV=\{V_1,\dots, V_r\}$ of disjoint subsets of $V(G)$, we say that an embedding $\psi$ of $F$ into $G$ or a copy $\psi(F)$ of $F$ in $G$ \emph{respects} $(\cX,\cV)$, if there exists a injective map $\pi:[r']\rightarrow [r]$ such that $\psi(X_i) \subseteq V_{\pi(i)}$ for each $i\in [r']$. By abuse of notation, we also use $V(F)$ to denote the partition of the vertex set of $F$ into singletons.

For a subgraph $H\subseteq G$
we denote by $R_{G,\cX,\cV}(F,H)$ the collection of $\phi$-rainbow copies $\bar F$ of $F$ in $G$ that respect $(\cX,\cV)$ and that contain $H$ as an induced subgraph.\COMMENT{We mainly use this definition for the case when $H$ is $K_1$ or $K_2$. In the former case, it's counting the rainbow copies containing a specific vertex. In the latter case, it's counting the rainbow copies containing a specific edge.}
Put $r_{G,\cX,\cV}(F,H) := |R_{G,\cX,\cV}(F,H)|$. If $H$ is the order-zero graph\COMMENT{that is a subgraph of every graph}, then we omit $H$ from the expression.
If $H$ is a vertex $v\in V(G)$ or an edge $uv\in E(G)$, then we write $R_{G,\cX,\cV}(F,v)$ or $R_{G,\cX,\cV}(F,uv)$, respectively.
Note that $R_{G,\cX,\cV}(F,uv)$ and $R_{G,\cX,\cV}(F,\{u,v\})$ are different since the former does not count the rainbow copies of $F$ containing $u,v$ but not the edge $uv$, while the latter counts only those.

For a given graph $F$ with a vertex partition $\cX=\{X_1,\dots, X_r\}$ of $V(F)$ into independent sets, let $\Aut_{\cX}(F)$ denote the set of automorphisms $\pi$ of $F$ such that $\{X_1,\dots, X_r\} = \{\pi(X_1),\dots, \pi(X_r)\}$.
We have $\Aut(F) = \Aut_{V(F)}(F)$, where $\Aut(F)$ is the set of all automorphisms of $F$.

The following two lemmas are easy corollaries of the ``rainbow counting lemma'' given in the appendix. Their deduction is also deferred to the appendix.
Roughly speaking, the proof relies on the fact that the global and local boundedness of the colouring $\phi$ together imply that the number of non-rainbow copies of  $F$ in $G$ containing a specific vertex or a specific edge is negligible, and so the number of  {\it rainbow} copies of $F$ in $G$ is roughly the same as the total number of copies of $F$ in $G$.


\begin{lemma}\label{counting partite}
Let $0<1/n \ll \zeta  \ll \eps\ll d, 1/r, 1/C, 1/f, 1/h \leq 1$.
Take a graph $F$ with $h$ edges and a vertex partition $\cX=\{X_1,\dots, X_r\}$ of $V(F)$ into independent sets, where $|X_i|=f$.  Take a graph $G$ with a vertex partition $\cV=\{V_1,\dots, V_r\}$ into independent sets. Suppose that $\phi$ is a $(C n,\zeta n)$-bounded colouring of $G$. Fix $j',j''\in [r]$ and an edge $vw \in E(G)$ with $v\in V_{j'}$ and $w \in V_{j''}$.
Suppose that the following conditions hold.
\begin{enumerate}[label=\text{{\rm (A\arabic*)$_{\ref{counting partite}}$}}]
\item \label{lem counting partite 2} For each $i\in [r]$, we have $|V_{i}| = (1\pm \zeta )n$.
\item \label{lem counting partite 1} For all $i\neq j \in [r]$, the bipartite graph $G[V_{i},V_{j}]$ is $(\zeta,d)$-superregular.
\item \label{lem counting partite 3} Either
$d_{G,V_{i}}(v,w) = (d^2 \pm \zeta)|V_{i}|$ and $vw \notin \Ir_{G}^{\phi}(\zeta n)$ for all $i\in [r]\setminus\{j',j''\}$, or $F$ is triangle-free.
\end{enumerate}
Then for any vertex $u\in V(G)$, we have
$$r_{G,\cX,\cV}(F,u) = (1\pm \eps) \frac{r! f d^h n^{fr-1}}{|\Aut_{\cX}(F)| } \enspace \text{ and } \enspace
r_{G,\cX,\cV}(F,vw) = (1\pm \eps) \frac{  r!h  d^{h-1} n^{fr-2}}{
\binom{r}{2} |\Aut_{\cX}(F)|}.
$$
\end{lemma}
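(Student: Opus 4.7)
The plan is to deduce both identities from the more general rainbow counting lemma stated in the appendix, by first counting unrestricted copies of $F$ in the superregular partite graph and then showing that almost all such copies are rainbow.

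First I would establish the count in the colour-blind setting. Since $G[V_i,V_j]$ is $(\zeta,d)$-superregular for $i\neq j$ by \ref{lem counting partite 1} and $|V_i|=(1\pm\zeta)n$ by \ref{lem counting partite 2}, the standard counting lemma for superregular bipartite graphs yields that, for any injection $\pi\colon [r]\to[r]$ and any vertex $u\in V_{\pi(i_0)}$, the number of graph-homomorphisms $\psi\colon V(F)\to V(G)$ with $\psi(X_i)\subseteq V_{\pi(i)}$ and mapping a fixed $x\in X_{i_0}$ to $u$ is $(1\pm O(\zeta))\, d^h n^{fr-1}$. Summing over the $f$ choices of $x$, the $(r-1)!$ choices of $\pi$ sending $X_{i_0}$ to $u$'s part, and the $r$ choices of $i_0$, and dividing by $|\Aut_\cX(F)|$ to pass from labelled embeddings to copies, produces the claimed main term $(1\pm O(\zeta))\, r!\,f\,d^h n^{fr-1}/|\Aut_\cX(F)|$. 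The edge-count is analogous: once an edge $xy$ of $F$ is chosen together with the order of its endpoints (forcing $\pi$ on two indices), the number of extensions agreeing with $\psi(x)=v,\,\psi(y)=w$ is $(1\pm O(\zeta))\, d^{h-1}n^{fr-2}$, where the codegree half of \ref{lem counting partite 3} is used exactly when $F$ contains a triangle through $xy$ (otherwise each new neighbour of $v$ or $w$ is placed using only one density factor, and no codegree is invoked).

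Second, I would invoke the rainbow counting lemma from the appendix to show the contribution of non-rainbow copies is negligible relative to these totals. A copy is non-rainbow iff two of its $h$ edges share a colour, so I would sum over ordered pairs $(e_1,e_2)$ of edges of $F$ and bound the number of partition-respecting copies that assign a common colour to $e_1$ and $e_2$. For adjacent $(e_1,e_2)$ sharing a vertex, local $\zeta n$-boundedness forces at most $\zeta n$ monochromatic continuations from any fixed edge; for disjoint $(e_1,e_2)$, global $Cn$-boundedness together with superregularity yield at most $O(d^{h-2}n^{fr-4}\cdot Cn)$ copies (and similarly $O(d^{h-3}n^{fr-5}\cdot Cn)$ if the fixed vertex or edge is incident to the pair). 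In the case where we fix an edge $vw$ and one of the repeated edges meets $vw$ at a common neighbour (i.e., a triangle through $vw$), the monochromatic codegree half of \ref{lem counting partite 3}, $vw\notin \Ir_G^\phi(\zeta n)$, limits the number of choices; this is exactly the configuration the triangle-free alternative avoids. After summing these bounds, the total number of non-rainbow copies is $o(d^h n^{fr-1})$ in the first case and $o(d^{h-1} n^{fr-2})$ in the second.

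The main obstacle is the second step: one must verify, in each configuration of $(e_1,e_2)$ and each way that these edges interact with the fixed vertex $u$ or fixed edge $vw$, that the right boundedness hypothesis saves a factor of at least $\zeta/d$ (say) compared to the naive count of labelled embeddings. The hierarchy $\zeta\ll\eps\ll d,1/r,1/C,1/f,1/h$ is then exactly what is needed to absorb the $O_{f,h,r}(1)$ summation over edge-pair configurations and still yield error $\eps$; combining this with the colour-blind count completes the proof.
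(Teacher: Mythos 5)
Your proposal is correct and follows the same two-step strategy as the paper (count all partition-respecting copies in the superregular setting, then show the non-rainbow ones are negligible using the local and global boundedness together with the colour-codegree hypothesis), but you organize the colour-blind step differently. The paper first duplicates each vertex of $G$ into $f$ copies, so that the embedding respects the \emph{finest} partition $\cW$ of the blow-up $G'$ with singleton $X$-parts; it then invokes the appendix counting lemma (Lemma~\ref{counting}) in that finest-partition form and afterwards corrects for degenerate copies (two duplicates of the same vertex) and divides by the overcounting factor $(f!)^{r-1}(f-1)!$ (resp.\ $(f!)^{r-2}((f-1)!)^2$). You instead count directly in $G$ with the coarser partition $\cV$, placing $f$ vertices of each $X_i$ into a single $V_{\pi(i)}$, and invoke a ``standard'' counting lemma for superregular $r$-partite graphs. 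Your route avoids the degenerate-copy bookkeeping and the overcounting factor, at the cost of appealing to a counting lemma not stated in the paper (the appendix's Lemma~\ref{counting} is set up specifically for the finest partition); the paper's duplication trick is what lets it reuse Lemmas~\ref{counting} and~\ref{non-rainbow counting} verbatim for both Lemmas~\ref{counting partite} and~\ref{counting quasirandom}. Your identification of exactly where each hypothesis enters—the graph-codegree half of \ref{lem counting partite 3} only when $F$ has a triangle through the mapped edge, the $\Ir_G^\phi(\zeta n)$ half only to bound non-rainbow copies where the two repeated-colour edges form a triangle with $vw$, and local $\zeta n$- versus global $Cn$-boundedness according to whether the two repeated-colour edges share a vertex—matches the paper's (commented) proof of Lemma~\ref{non-rainbow counting}.
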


\begin{lemma}\label{counting quasirandom}
Let $0<1/n \ll \zeta  \ll \eps\ll d, 1/C, 1/f, 1/h \leq 1$.
Take a graph $F$ with $f$ vertices and $h$ edges and an $n$-vertex graph $G$ which is $(\zeta,d)$-quasirandom. Suppose that $\phi$ is a $(C n,\zeta n)$-bounded colouring of $G$. Fix $vw \in E(G)$.
Suppose that the following holds.
\begin{enumerate}[label=\text{{\rm (A\arabic*)$_{\ref{counting quasirandom}}$}}]
\item \label{lem counting quasirandom 1} Either $vw \notin \Ir_{G}(\zeta,d)\cup \Ir_{G}^{\phi}(\zeta n)$ or $F$ is triangle-free.
\end{enumerate}
Then for any vertex $u\in V(G)$, we have
$$r_{G}(F,u) = (1\pm \eps/3) \frac{f d^h n^{f-1}}{|\Aut(F)| } \enspace \text{ and } \enspace
r_{G}(F,vw) = (1\pm \eps/3) \frac{ 2h d^{h-1} n^{f-2}}{ |\Aut(F)|}.
$$
\end{lemma}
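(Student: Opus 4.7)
The plan is to split $r_G(F,u)$ (resp.\ $r_G(F,vw)$) into the total number of copies of $F$ in $G$ through $u$ (resp.\ $vw$) minus the number of non-rainbow such copies: the former is handled by standard quasirandom counting, and the latter is controlled via the $(Cn,\zeta n)$-boundedness of $\phi$. This is essentially an application of the ``rainbow counting lemma'' advertised for the appendix, and the two equalities follow the same outline, differing only in where assumption (A1)$_{\ref{counting quasirandom}}$ enters.

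For the main term I would fix an ordering of $V(F)$ and build an embedding $\psi$ vertex by vertex. In the vertex case, fix which position $v_F^\star\in V(F)$ is sent to $u$ and sum over the $f$ positions; in the edge case, fix an ordered edge $(u_F,v_F)\in V(F)^2$ with $\{u_F,v_F\}\in E(F)$, $\psi(u_F)=v$ and $\psi(v_F)=w$, and sum over the $2h$ ordered edges, noting that each embedding with $vw$ in its image is counted exactly once. Using $(\zeta,d)$-quasirandomness (together with the fact that all but $o(n^2)$ pairs have codegree $(d^2\pm\zeta)n$, which enables the usual iterated counting for multi-vertex extensions), each newly embedded vertex $v_j^F$ admits approximately $d^{k_j}n$ extensions, where $k_j$ is the number of already-embedded neighbours of $v_j^F$ in $F$. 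Multiplying these, dividing by $|\Aut(F)|$, and summing over positions gives the main terms $fd^h n^{f-1}/|\Aut(F)|$ and $2hd^{h-1}n^{f-2}/|\Aut(F)|$. Assumption (A1)$_{\ref{counting quasirandom}}$ matters only in the edge version and only when $F$ contains a triangle through $e_F=\{u_F,v_F\}$: the third vertex of that triangle must lie in $N_G(v,w)$, whose size must be $(d^2\pm \zeta)n$, which is exactly what $vw\notin \Ir_G(\zeta,d)$ provides; the triangle-free alternative bypasses this.

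For the correction I would iterate over the $O(1)$ unordered pairs $\{e,e'\}\subseteq E(F)$ of distinct edges and bound the number of embeddings (through $u$ or $vw$) in which $\phi(\psi(e))=\phi(\psi(e'))$. If $e,e'$ share a vertex in $F$, then the image of $e\cup e'$ is a monochromatic cherry in $G$; local $\zeta n$-boundedness gives $\sum_c\binom{d_c(v)}{2}\le \zeta n\cdot d_G(v)$ at each $v$, so the total number of such cherries in $G$ is $O(\zeta n^3)$, and extending each to a copy of $F$ through $u$ (resp.\ $vw$) bounds the bad copies by $O(\zeta n^{f-1})$ (resp.\ $O(\zeta n^{f-2})$) per pair. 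If $e,e'$ are vertex-disjoint in $F$, global $Cn$-boundedness gives $\sum_c\binom{|E_c|}{2}\le Cn\cdot e(G)=O(Cn^3)$, and extending yields $O(Cn^{f-2})$ (resp.\ $O(Cn^{f-3})$) bad copies per pair, which is of lower order since $1/n\ll\eps$. Summing, the correction is at most $\tfrac{\eps}{3}$ times the main term, giving the required $(1\pm\eps/3)$ approximation. The main obstacle is really just bookkeeping---making sure the codegree hypothesis from (A1)$_{\ref{counting quasirandom}}$ is invoked exactly when $F$ contains a triangle through $vw$; beyond that, everything is a routine quasirandom-plus-boundedness calculation.
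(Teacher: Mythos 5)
Your route is genuinely different from the paper's. You argue directly in the quasirandom graph $G$: iterated vertex-by-vertex counting for the main term, and a case analysis on pairs of edges of $F$ mapped to same-coloured edges of $G$ for the correction. The paper instead blows up $G$ to an $f$-partite graph $H$ (each vertex $x$ duplicated into $x_1,\ldots,x_f$), converts quasirandomness into $\eps$-superregularity of the bipartite pairs via Theorem~\ref{thm: almost quasirandom}, applies the partite Lemma~\ref{counting partite} (itself built from Lemma~\ref{counting} plus Lemma~\ref{non-rainbow counting}), and then descends to $G$ by subtracting the negligible degenerate copies and dividing by $(f-1)!$ (respectively $(f-2)!$). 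The blow-up buys them a single unified counting lemma reused for both the quasirandom and the genuinely multipartite settings; your approach is more self-contained but re-does the counting by hand. One mild imprecision in your main term: with the paper's weak quasirandomness (only \emph{most} pairs have codegree $(d^2\pm\zeta)n$), iterated extension counting needs an explicit argument that hitting exceptional pairs is rare (Lemma~\ref{lem: irregular degree} gives $\Delta(\Ir_G(\zeta^{1/10},d))\le\zeta^{1/10}n$, which suffices), or a reduction to $\eps$-regularity via Theorem~\ref{thm: almost quasirandom}; you acknowledge this but do not carry it out.

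There is, however, a genuine gap in the correction term for $r_G(F,vw)$. You bound the adjacent-pair contribution by the \emph{global} number of monochromatic cherries in $G$, which is $O(\zeta n^3)$ by local $\zeta n$-boundedness, and then extend. This covers cherries disjoint from $\{v,w\}$, cherries with $v$ or $w$ as a leaf or centre, and cherries that contain the edge $vw$ itself. But it does not cover the case of a cherry $v\text{--}x\text{--}w$ with $x\notin\{v,w\}$ and $\phi(vx)=\phi(xw)$: the number of such cherries is exactly $c_G^\phi(v,w)$, which the global bound does not control — a priori it could be $\Theta(n)$, contributing $\Theta(n^{f-2})$ bad copies, the same order as the main term. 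Bounding $c_G^\phi(v,w)\le\zeta n$ is precisely what the $\Ir_G^\phi(\zeta n)$ part of hypothesis (A1)$_{\ref{counting quasirandom}}$ supplies. You correctly observe that (A1) is needed only when $F$ has a triangle through the edge mapped to $vw$, and you correctly deploy the $\Ir_G(\zeta,d)$ half of (A1) for the main-term codegree; but you never invoke the $\Ir_G^\phi(\zeta n)$ half, and without it the $v\text{--}x\text{--}w$ case is unbounded. (When $F$ is triangle-free, the two edges $e,e'\in E(F)$ sharing a vertex cannot have their other endpoints adjacent in $F$, so this case cannot arise — that is exactly the ``or $F$ is triangle-free'' escape in (A1).) To repair the argument, split the adjacent-pair case according to the intersection of the cherry's image with $\{v,w\}$ and, in the $v\text{--}x\text{--}w$ subcase, replace the global cherry bound by $c_G^\phi(v,w)\le\zeta n$ from (A1).
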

Note that in our applications of these lemmas, \ref{lem counting partite 3} and \ref{lem counting quasirandom 1} will be satisfied for {\it all} edges, and thus the conclusion will hold for all edges as well.
\subsection{A rainbow blow-up lemma}

The following statement is an easy consequence of the rainbow blow-up lemma proved by Glock and Joos \cite{GJ18}, which is our main tool to turn almost-spanning structures into  spanning ones. Note however that the boundedness condition on $\phi$ is much more restrictive than in our results.

\begin{theorem}\label{blowup}
Let $0<1/n \ll \delta_2 \ll\gamma, 1/r, d, 1/\Delta \leq 1$.
Suppose that $H$ is a graph with vertex partition $\{X_0, X_1,\dots, X_r\}$ and $G$ is a graph with vertex partition $\{V_0, V_1,\dots, V_r\}$. Let $\phi$ be a $\delta_2 n$-bounded colouring of $G$. Suppose that the following conditions hold.
\begin{enumerate}[label=\text{{\rm (A\arabic*)$_{\ref{blowup}}$}}]
\item \label{lem blowup 1} For each $i\in [r]\cup \{0\}$, $X_i$ is an independent set of $H$ and $\Delta(H)\leq \Delta$. Moreover, no  two vertices of $X_0$ have a common neighbour.
\item \label{lem blowup 2} $\psi': X_0 \rightarrow V_0$ is an injective map and $|X_0|\leq \delta_2 n$.
\item \label{lem blowup 3} For each $i\in [r]$, we have $|X_i|\leq |V_i|$ and $|V_i| = (1\pm \delta_2) n$.
\item \label{lem blowup 4} For all $i \neq j\in [r]$, the graph $G[V_i,V_j]$ is $(\delta_2,d)$-superregular.
\item \label{lem blowup 5} For all $x\in X_0$ and $i\in [r]$, if
$N_{H}(x)\cap X_i \neq \emptyset$, then we have $d_{G,V_i}(\psi'(x)) \geq \frac{\gamma d}2 |V_i|$.\COMMENT{This is needed for the multi-partite case at the end.}
\end{enumerate}
Then there is an embedding $\psi$ of $H$ into $G$ which extends $\psi'$ such that $\psi(X_i) \subseteq V_i$ for each $i\in [r]$  and $\psi(H)$ is a rainbow subgraph of $G$.
Moreover, if $|X_i| \leq (1-\sqrt{\delta_2})n$ for all $i\in [r]$, then the prefix ``super'' in \ref{lem blowup 4} may be omitted.
\end{theorem}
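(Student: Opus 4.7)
Proof plan. The theorem follows from the rainbow blow-up lemma of Glock--Joos \cite{GJ18}, whose general form allows pre-embedded vertex sets (via a partial map $\psi'$) and image restrictions on the remaining vertices. The idea is to encode the pre-embedding via target-set restrictions and to invoke the lemma with $\psi'$ already supplied.

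For each $v \in X_i$ with $i \ge 1$ I would set $T_v := V_i$ if $v$ has no neighbour in $X_0$, and otherwise $T_v := N_G(\psi'(x)) \cap V_i$, where $x$ is the unique $X_0$-neighbour of $v$ (the uniqueness being exactly the content of the common-neighbour condition in \ref{lem blowup 1}). Condition \ref{lem blowup 5} gives $|T_v| \ge (\gamma d/2)|V_i|$, a linear fraction of $V_i$, and only $|Y| \le |X_0|\Delta \le \delta_2 n\Delta$ vertices carry a non-trivial restriction, which is a vanishing fraction of the host. Applying the Glock--Joos lemma to $H$ with partition $(V_0, V_1, \ldots, V_r)$, pre-embedding $\psi'$, target sets $\{T_v\}$, and colouring $\phi$ then yields the desired rainbow embedding $\psi$. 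Its hypotheses are supplied by \ref{lem blowup 3} and \ref{lem blowup 4} (part sizes and superregularity), $\Delta(H)\le\Delta$ together with the independence of $X_0$ from \ref{lem blowup 1}, the pre-embedding size bound \ref{lem blowup 2}, and the $\delta_2 n$-boundedness of $\phi$. The bridge edges $\psi'(x)\psi(v)$ automatically lie in $G$, since $\psi(v) \in T_v \subseteq N_G(\psi'(x))$, and their colours are distinct from each other and from the colours of the remaining embedded edges because the conclusion of the lemma is that the whole image of $\psi$ is rainbow.

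The ``moreover'' clause, in which $(\delta_2, d)$-regularity replaces $(\delta_2, d)$-superregularity under $|X_i| \le (1-\sqrt{\delta_2})n$, follows by the classical reduction from regular to superregular bipartite graphs: within each $V_i$, discard the $O(\delta_2)$-fraction of vertices whose degree to some $V_j$ falls outside $(d\pm\delta_2)|V_j|$, obtaining subsets $V_i' \subseteq V_i$ of size at least $|X_i|$ on which every $G[V_i', V_j']$ is $(\delta_2^{1/2}, d)$-superregular, and then apply the superregular case to the restricted parts. The main expected obstacle is to verify that \cite{GJ18} directly accepts both a pre-embedding and the accompanying bridge-edge rainbow constraint in the form we need; if the version stated there supports only free blow-up embeddings, a short preprocessing step would be needed — namely randomly pre-selecting images $\psi(v) \in T_v$ for $v \in Y$, then using Proposition~\ref{prop: edge deletion regular} to absorb the pairwise-distinct-bridge-colours requirement by deleting the $O(\delta_2^2 n^2\Delta) = o(n^2)$ edges of $G$ whose colour matches a bridge colour, after which the bare rainbow blow-up lemma applies to the remainder with the pre-selected images treated as additional pre-embedded data.
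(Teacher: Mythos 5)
The paper offers no proof of Theorem~\ref{blowup}: it simply states it as ``an easy consequence of the rainbow blow-up lemma proved by Glock and Joos \cite{GJ18}'' and leaves the derivation to the reader, so there is no in-paper argument to compare against. Your sketch is a plausible reconstruction of that intended derivation: encoding the pre-embedded class $X_0$ through candidate sets $T_v = N_G(\psi'(x))\cap V_i$, using the uniqueness of the $X_0$-neighbour (from \ref{lem blowup 1}) to make the target sets well-defined, invoking \ref{lem blowup 5} for their size, and reducing the ``moreover'' (regular) case to the superregular one by discarding atypical vertices — this is exactly the routine bookkeeping one expects. Two small points deserve care. First, the Glock--Joos lemma as stated does allow a pre-embedded subgraph with the rainbow conclusion covering the bridge edges, so your preprocessing alternative (deleting edges of bridge colours before applying a bare version of the lemma) is a hedge you most likely won't need; and as you yourself note, that alternative would in any case have to re-impose candidate-set restrictions on the vertices in $N_H(Y)\setminus(X_0\cup Y)$ once $Y$ is fixed, since those vertices also carry constraints through $Y$. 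Second, in the ``moreover'' step, after discarding the atypical vertices to obtain $V_i'\subseteq V_i$, the degrees that certify superregularity of $G[V_i',V_j']$ are into $V_j$ rather than $V_j'$; one needs the standard observation that deleting an $O(\delta_2 r)$-fraction changes those degrees by a correspondingly small amount, and likewise that the lower bound from \ref{lem blowup 5} survives the deletion with a slightly worse constant. Neither issue is an obstruction, but both should be stated if you were writing this out in full.
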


\subsection{Matchings in hypergraphs} This section starts with a classical result due to Pippenger and Spencer on matchings in hypergraphs. We then prove a ``defect'' version of this (see Lemma~\ref{lem: random matching}). We conclude the section with Lemma~\ref{lem: random F decomp}, which is a translation of  results on almost-spanning matchings in hypergraphs to results on approximate decompositions into rainbow almost-spanning factors. Lemma~\ref{lem: random F decomp} is an essential step in the proofs of our theorems, allowing to obtain an approximate rainbow structure, which we then complete using the rainbow blow-up lemma.

Recall that we allow hypergraphs to have multiple edges.  For a hypergraph $\cH$ and $u,v\in V(\cH)$, we let $d_{\cH}(v):= |\{H\in E(\cH): v\in H\}|$ and $d_{\cH}(uv):= |\{H\in E(\cH): \{u,v\}\subseteq H\}|$. We let
$$\Delta(\cH):= \max_{v\in V(\cH)} d_{\cH}(v) \enspace \ \ \text{and} \ \ \enspace \Delta_2(\cH):=\max_{u\neq v\in V(\cH)} d_{\cH}(uv) $$
be the maximum degree and codegree of $\cH$, respectively. A {\it matching} in a hypergraph is a collection of disjoint edges. It is {\it perfect} if it covers all the vertices of the hypergraph. If all sets in a matching have size $r$, then we call it an {\it $r$-matching}.

\begin{theorem}\cite{PS89} \label{lem: Pippenger}
Let $0<1/n\ll \epsilon \ll \delta, 1/r <1$.
If $\cH$ is an $n$-vertex $r$-uniform hypergraph satisfying $\delta(\cH)\geq (1-\eps) \Delta(\cH)$ and $\Delta_2(\cH) \leq \eps \Delta(\cH)$, then
$E(\cH)$ can be partitioned into $(1+\delta)\Delta(\cH)$ matchings.
\end{theorem}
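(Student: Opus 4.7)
The plan is to prove this via the Rödl semi-random (``nibble'') method, adapted to the hypergraph chromatic index problem as in \cite{PS89}. The task is equivalent to properly edge-colouring $\cH$ using at most $(1+\delta)\Delta$ colours, where $\Delta = \Delta(\cH)$, since a colour class in such a colouring is precisely a matching.

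First I would prove a one-stage nibble lemma: given any nearly-regular $r$-uniform hypergraph $\cH$ with $\delta(\cH) \geq (1-\eta)\Delta(\cH)$ and $\Delta_2(\cH) \leq \eta\Delta(\cH)$, activate each edge independently with probability $p = \gamma/\Delta(\cH)$ for a small constant $\gamma > 0$, and retain those activated edges that are disjoint from all other activated edges. The retained set is a random matching $M$. Using Chernoff's inequality (Lemma~\ref{Chernoff}) together with the codegree bound, one verifies that each vertex $v$ is covered by $M$ with probability $\gamma e^{-r\gamma}(1 \pm o(1))$. Concentration of the residual vertex degrees $d_{\cH - V(M)}(v)$ can then be obtained via Azuma's inequality (Theorem~\ref{Azuma}) applied to the edge-exposure martingale over the independent Bernoulli activations. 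With high probability, the residual hypergraph $\cH' := \cH - V(M)$ inherits the defining hypotheses with only a slightly worse parameter $\eta'$, and $\Delta(\cH')$ has shrunk by a definite multiplicative factor.

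Second, I would iterate the one-stage lemma $T$ times, where $T$ is chosen so that the maximum degree drops below $\eps\Delta/r$; in each iteration, the extracted matching $M_t$ becomes a new colour class. After $T$ iterations, the uncoloured edges form a hypergraph $\cH^*$ with maximum degree at most $\eps\Delta/r$. A simple greedy argument then edge-colours $\cH^*$ using at most $r\Delta(\cH^*) \leq \eps\Delta$ further colours, since each edge of $\cH^*$ conflicts with fewer than $r(\Delta(\cH^*) - 1)$ others. Combining this with the $T$ matchings produced by the nibble, and choosing $\eps$ sufficiently small relative to $\delta$, yields a partition of $E(\cH)$ into at most $(1+\delta)\Delta$ matchings, as required.

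The main obstacle is controlling the codegree parameter $\eta_t$ across the $T$ iterations. While the ordinary degree $d_{\cH'}(v)$ concentrates easily, $d_{\cH'}(uv)$ is already small to begin with, so naive bounds on its fluctuations can swamp its mean. One must show that the ratio $\Delta_2(\cH')/\Delta(\cH')$ stays bounded by a sequence $\eta_t$ that does not diverge across iterations. This requires a careful hierarchy of parameters, balancing $\gamma$, the number of rounds $T$, and the permitted slack in $\eta_t$, and exploits the fact that the Bernoulli activation model produces second-moment cancellations that tame the codegree drift.
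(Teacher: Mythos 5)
This theorem is quoted from Pippenger and Spencer \cite{PS89}; the paper cites it as a black box and offers no proof, so there is no internal argument to compare against. Judged on its own terms, your proposal has a structural gap that cannot be patched within the framework you describe.

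The flaw is in the iteration step $\cH' := \cH - V(M)$. This deletes \emph{every} edge incident to a vertex of $V(M)$, yet only the edges actually belonging to $M$ are placed into a colour class. The edges that meet $V(M)$ without lying in $M$ simply vanish from the process and are never coloured. This is precisely the residual used in the R\"odl nibble for the \emph{covering/packing} problem (find one near-perfect matching, as in Theorem~\ref{cl: r-factor} or Lemma~\ref{lem: random matching}), but it is the wrong residual for the edge-chromatic-number problem, where every edge must end up in some matching. The quantitative consequence is fatal: one nibble round covers a $\Theta(\gamma e^{-r\gamma})$-fraction of vertices, hence shrinks the maximum degree multiplicatively by roughly $1 - r\gamma e^{-r\gamma}$, so only $T = \Theta\!\big(\log(r/\eps)/(r\gamma e^{-r\gamma})\big)$ rounds are needed before the residual degree drops below $\eps\Delta/r$. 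That $T$ is a \emph{constant}, independent of $\Delta$, and the $T$ extracted matchings together cover at most a $T\gamma e^{-r\gamma}/\Delta = o(1)$ fraction of $E(\cH)$. So the set of uncoloured edges is not $E(\cH^*)$ (your small-degree residual) but nearly all of $E(\cH)$, and those edges include ones concentrated at vertices of degree close to $\Delta$. Your greedy step therefore applies to the wrong object, and the claimed bound of $T + r\Delta(\cH^*) \le (1+\delta)\Delta$ on the number of matchings does not bound anything relevant.

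The genuine Pippenger--Spencer argument does not extract one matching per round. It fixes the palette of $\approx(1+\delta)\Delta$ colours up front and extends \emph{all} colour classes simultaneously: in each round, every uncoloured edge tentatively chooses a uniformly random available colour; edges that clash at a vertex with a same-coloured edge withdraw; the retained edges are permanently assigned. The residual is then $E(\cH)$ minus the edges that were permanently coloured, not $\cH - V(M)$, and one shows that its degree contracts while the per-vertex lists of available colours remain large — this is the delicate part. You would also need the codegree control you mention, but the primary repair is to change the iteration from ``extract one matching, delete its vertices'' to ``simultaneously grow $\Theta(\Delta)$ colour classes, delete only the edges successfully coloured''. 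As written, the proposal proves a near-perfect matching statement, not a decomposition into matchings.
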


Applying this theorem, we can  prove a variation in which the hypergraph is allowed to have vertices of smaller degree, but the matchings are only required to cover the vertices of ``correct'' degree. We will need the following classical result on resolvable block designs due to Ray-Chaudhuri and Wilson, formulated in terms of matchings of $r$-sets.
\begin{theorem}[\cite{RW73}]\label{cl: r-factor}
  For any $r\in \mathbb N$ there exists  $b'_0 \in \mathbb N,$ such that the following holds for any $b'\ge b'_0$. For any $\rho\le 1$ there exists an $r$-uniform regular hypergraph $\cA$ on vertex set $X$ of size $b:=r(r-1)b'+r$, such that its degree is $\lfloor\rho g\rfloor$, where $g:=rb'+1=(b-1)/(r-1)$, and its maximum codegree is $1$. Moreover, $\cA$ is decomposable into $\lfloor\rho g\rfloor$ perfect $r$-matchings.
\end{theorem}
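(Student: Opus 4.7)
The plan is to invoke the classical theorem of Ray-Chaudhuri and Wilson~\cite{RW73}: for every integer $r\ge 2$ there exists $v_0=v_0(r)$ such that, for all $v\ge v_0$ with $v\equiv r\pmod{r(r-1)}$, a resolvable Steiner system $S(2,r,v)$ exists. Here ``resolvable'' means that the blocks (which are $r$-subsets covering every pair of points exactly once) partition into \emph{parallel classes}, each of which is itself a partition of the vertex set into $r$-sets, i.e.\ a perfect $r$-matching. The divisibility condition $b=r(r-1)b'+r\equiv r\pmod{r(r-1)}$ is built into the statement we want, so choosing $b'_0$ so that $b=r(r-1)b'+r\ge v_0(r)$ whenever $b'\ge b'_0$ puts us in the regime where the theorem applies. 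In such a design, every point lies in exactly $(b-1)/(r-1)=rb'+1=g$ blocks and the design decomposes into exactly $g$ parallel classes (since each class contains each point once, so each contributes $1$ to the degree of each point).

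For the claimed statement with parameter $\rho\le 1$, I would simply take the union $\cA$ of any $\lfloor \rho g\rfloor$ of these $g$ parallel classes. Then $\cA$ is $r$-uniform by construction; every vertex has degree exactly $\lfloor\rho g\rfloor$ (one block per selected parallel class); the codegree of any pair is at most $1$, because in the full Steiner system it is already exactly $1$; and $\cA$ is by definition decomposed into $\lfloor\rho g\rfloor$ perfect $r$-matchings, namely the selected parallel classes. All the required conclusions follow immediately.

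There is essentially no obstacle here beyond quoting the correct form of Ray-Chaudhuri-Wilson, so the only thing to be careful about is matching parameters: verifying that $b\equiv r\pmod{r(r-1)}$ for $b=r(r-1)b'+r$, that $g=(b-1)/(r-1)=rb'+1$ equals both the replication number (degree) and the number of parallel classes, and that selecting a subcollection of parallel classes preserves regularity and the codegree bound. These are routine checks from the definition of a resolvable $2$-design.
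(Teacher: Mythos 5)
Your proof is correct and takes exactly the approach the paper intends: the paper states this result as a direct citation to Ray--Chaudhuri and Wilson's resolvable block design theorem, and your derivation (resolvable $S(2,r,b)$ exists for $b\equiv r\pmod{r(r-1)}$ large; replication number and number of parallel classes both equal $g=(b-1)/(r-1)$; select $\lfloor\rho g\rfloor$ parallel classes) is precisely the standard unfolding of that citation. The parameter checks all go through, so there is nothing to add.
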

Note that if we take $\rho=1$ in the theorem above, then codegree of any two vertices in $X$ is $1$, that is, any pair is contained in exactly one edge of a matching.
We now state our ``defect'' version of Theorem~\ref{lem: Pippenger}.

\begin{lemma}\label{lem: random matching}
Let $0<1/n \ll \eps\ll \delta,1/r<1$.
Suppose that $\cH$ is an $r$-uniform hypergraph satisfying $\Delta_2(\cH) \leq \eps \Delta(\cH)$.
Put
$$U:=\{ u\in V(\cH): d_{\cH}(u) < (1-\eps )\Delta(\cH)\} \text{ and }
V':= V(\cH)\setminus U.$$
Suppose $V\subseteq V'$ with $|V|=n$.
\begin{itemize}
\item[(i)]
There exist at least $(1-\delta)\Delta(\cH)$ edge-disjoint matchings of $\cH$ such that each matching covers at least $(1-\delta)n$ vertices of $V$ and each vertex $v$ of $V$ belongs to at least $(1-\delta)\Delta(\cH)$ of the matchings.

\item[(ii)]
There exists a randomized algorithm which always returns a matching $\cM$ of $\cH$ covering at least $(1-\delta) n$ vertices of $V$  such that for each $v\in V$ we have
$$\mathbb{P}[v\in V(\cM)] \ge 1 -\delta.$$
\end{itemize}
\end{lemma}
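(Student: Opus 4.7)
The plan is to reduce (i) to the Pippenger--Spencer theorem (Theorem~\ref{lem: Pippenger}) by \emph{augmenting} $\cH$ into an essentially $\Delta$-regular $r$-uniform hypergraph $\cH^+$ whose maximum codegree stays $O(\eps\Delta)$, and then to derive (ii) from (i) by uniform sampling. Throughout write $\Delta:=\Delta(\cH)$ and $\delta_u:=\Delta - d_\cH(u)\ge 0$ for $u\in U$; I assume $\Delta\ge 2/\eps$, the complementary small-$\Delta$ regime being essentially vacuous and disposable by a direct ad hoc argument. To build $\cH^+$, introduce a fresh vertex set $X$ disjoint from $V(\cH)$ of size $b$ of the form $r(r-1)b'+r$, chosen large enough that $b\ge (r-1)\sum_{u\in U}\delta_u$ and that Theorem~\ref{cl: r-factor} yields an $r$-uniform $(\Delta-1)$-regular hypergraph $\cA$ on $X$ of maximum codegree $1$, decomposable into $\Delta-1$ perfect matchings. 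Then adjoin to $\cH$ the \emph{bridging} edges $\{u\}\cup S_{u,i}$ for $u\in U,\ i\in[\delta_u]$, where the $(r-1)$-subsets $S_{u,i}\subseteq X$ are chosen pairwise disjoint, together with all edges of $\cA$. A direct check gives $\Delta(\cH^+)=\Delta$, $\delta(\cH^+)\ge (1-\eps)\Delta$, and $\Delta_2(\cH^+)\le \max(\eps\Delta,2)\le \eps\Delta$ (the bound $2$ arising only for pairs inside $X$, from at most one bridging edge plus one $\cA$-edge).

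Apply Theorem~\ref{lem: Pippenger} to $\cH^+$ with parameter $\delta':=\sqrt{\eps}\ll\delta$ to partition $E(\cH^+)$ into $M\le(1+\delta')\Delta$ matchings $\cM_1,\ldots,\cM_M$. Call $\cM_j$ \emph{bad} if $|V(\cM_j)\cap V|<(1-\delta/2)n$. Since $V\subseteq V'$ is untouched by bridging and $\cA$-edges, $d_{\cH^+}(v)=d_\cH(v)\ge (1-\eps)\Delta$ for every $v\in V$, and as the matchings partition $E(\cH^+)$, double-counting gives
\[
\sum_{j=1}^{M}|V(\cM_j)\cap V|\;=\;\sum_{v\in V}d_{\cH^+}(v)\;\ge\;(1-\eps)n\Delta.
\]
As each matching contributes at most $n$ and each bad one at most $(1-\delta/2)n$, the number $B$ of bad matchings satisfies $Mn-B\,\delta n/2\ge(1-\eps)n\Delta$, hence $B\le 2(\delta'+\eps)\Delta/\delta\le \delta\Delta/4$.

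Discarding the bad matchings leaves $M':=M-B\ge (1-\delta)\Delta$ good matchings, each covering at least $(1-\delta/2)n$ vertices of $V$; moreover each $v\in V$, lying in exactly $d_{\cH^+}(v)\ge(1-\eps)\Delta$ matchings, lies in at least $(1-\eps)\Delta-B\ge (1-\delta/2)\Delta$ good ones. Restricting each good matching to $E(\cH)$ preserves its intersection with $V$, since the deleted bridging and $\cA$-edges lie entirely in $U\cup X$, which is disjoint from $V$. This yields (i). For (ii), sample $\cM$ uniformly from the $M'$ good (restricted) matchings: deterministically $|V(\cM)\cap V|\ge(1-\delta/2)n\ge(1-\delta)n$, while for each $v\in V$,
\[
\mathbb{P}[v\in V(\cM)]\;\ge\;\frac{(1-\delta/2)\Delta}{M'}\;\ge\;\frac{(1-\delta/2)\Delta}{(1+\delta')\Delta}\;\ge\;1-\delta.
\]

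The main technical hurdle is not conceptual but combinatorial: the augmentation must simultaneously lift all low-degree vertices of $\cH$ up to degree $\Delta$, make the dummies in $X$ near-regular, and keep the codegree globally below $\eps\Delta$. The Ray-Chaudhuri--Wilson design on $X$ from Theorem~\ref{cl: r-factor}, together with the pairwise disjointness of the $S_{u,i}$, is precisely what allows me to meet these three constraints at once; without such a structured gadget one quickly loses control of $\Delta_2(\cH^+)$.
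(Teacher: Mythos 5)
Your proof is correct, but it takes a genuinely different route from the paper's. The paper performs a \emph{two-stage} regularization \emph{inside} $V(\cH)$: first it repeatedly installs Ray-Chaudhuri--Wilson designs on shrinking subsets of $U$ until the residual set $U_k$ of low-degree vertices has size $\le\eps n$, and only then does it bridge $U_k$ to $(r-1)$-subsets of $V(\cH)\setminus U_k$. That first stage is forced on them because their bridging edges live inside $V(\cH)$ and can therefore cover vertices of $V$; shrinking $U$ first ensures that restricting a Pippenger--Spencer matching to $E(\cH)$ loses at most $r|U_k|\le r\eps n$ vertices of $V$. You sidestep this entirely by routing all the added degree through a fresh auxiliary vertex set $X$, disjoint from $V(\cH)$: the $(\Delta-1)$-regular design $\cA$ on $X$ regularizes $X$ itself, and the pairwise-disjoint bridging edges $\{u\}\cup S_{u,i}$ lift every $u\in U$ to degree $\Delta$ in one shot, while keeping $\Delta_2\le 2$ on all new pairs. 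Since every added edge lies in $U\cup X$, which is disjoint from $V$, restricting the resulting matchings to $E(\cH)$ cannot uncover a single vertex of $V$, so a single augmentation suffices and the paper's iterative first stage becomes unnecessary. Your version buys a shorter and cleaner argument at the cost of enlarging the ambient vertex set (possibly to size $\Theta(|V(\cH)|\Delta)$), which is harmless since Pippenger--Spencer only requires the vertex count to be large. One cosmetic note: your caveat ``$\Delta\ge 2/\eps$'' can be dropped by simply observing $\Delta_2(\cH)\ge1$ forces $\Delta\ge\eps^{-1}$, so $\Delta_2(\cH^+)\le\max\{\eps\Delta,2\}\le 2\eps\Delta$ always, and applying Theorem~\ref{lem: Pippenger} with $2\eps$ in place of $\eps$.
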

\begin{proof}
Note that $\Delta_2(\cH)\geq 1$ implies $\Delta(\cH)\geq \eps^{-1}$. Before we can apply Theorem~\ref{lem: Pippenger}, we have to preprocess our hypergraph and make it nearly regular, without increasing the codegree too much. We shall do this in two stages.

The first stage is the following process.
We iteratively obtain a sequence of hypergraphs
$\cH=:\cH_{0}\subseteq \cH_{1}\subseteq \dots$ on the same vertex set, until we have that $|U_i|\le \eps n$ at some step, where
 \begin{equation}\label{eq: smallresidue} U_i:=\big\{u\in V(\cH): d_{\cH_i}(u) < (1-\eps)\Delta(\cH)\big\}.\end{equation}
 We additionally require that throughout our process the following hold for each $i$:
\begin{align}\label{eq: cH'}
\begin{split}
\Delta(\cH_i) &\leq (1+\eps^{1/3}) \Delta(\cH), \enspace \Delta_2(\cH_i) \leq \Delta_2(\cH)+ 2i\ \ \ \  \text{ and } \\
\delta(\cH_i) &\geq \min\{ (1-\eps)\Delta(\cH),
\delta(\cH) + \eps^{-1/2} i\}.
\end{split}
\end{align}
Note that $\cH_0$ satisfies \eqref{eq: cH'}. Suppose that we have constructed $\cH_{i}$ and assume that $|U_i|> \eps n$. Then we find two (not necessarily disjoint) sets $U^1_i$ and $U^2_i$ of the same size $r(r-1)b'+r$ for some integer $b'$, such that $U^1_i\cup U^2_i = U_i$. We apply Theorem~\ref{cl: r-factor} to $U_i^j$, $j\in [2]$, and find an $r$-uniform regular hypergraph $\cA_i^j$ on $U_i^j$ with degree  $\eps^{-1/2}$ and maximum codegree $1$. Then we put $\cH_{i+1}:=\cH_i\cup \cA_i^1\cup\cA_i^2$, and repeat the procedure until   $|U_i|\le \eps n$ for some $i$, say $i=k$. Note that we may well be adding the same edge multiple times and, should this be the case, keep multiple copies of it.

Let us verify the validity of \eqref{eq: cH'}. Clearly, the minimum degree increases by at least $\eps^{-1/2}$ at each step, while the codegree of any two vertices increases by at most $2$. We also have
$$\Delta(\cH_{i+1}) \leq \max\{\Delta(\cH_{i}), (1-\eps)\Delta(\cH) + 2 \eps^{-1/2} \} \leq  (1+\eps^{1/3}) \Delta(\cH).$$

Recall that $|U_k|\le \eps n$ and $k$ is the smallest index for which it holds. Due to the minimum degree condition in \eqref{eq: cH'}, we have $k\le \eps^{1/2}\Delta(\cH)$, and thus
\begin{eqnarray}\label{eq: cH'2}
\Delta_2(\cH_k)\stackrel{\eqref{eq: cH'}}{\leq} 3 \eps^{1/2}\Delta(\cH).
\end{eqnarray}
This concludes the first stage of modification.

The goal of the second stage is to fix the degrees in the small exceptional set $U_k$. Put $t:= \sum_{u\in U_k} (\Delta(\cH) - d_{\cH_k}(u))$. Note that $t\le \eps n\Delta(\cH)$.  Consider a family $\cW$ of disjoint $(r-1)$-sets on $V(\cH)\setminus U_k$, such that
$$|\cW|= \Big\lfloor\frac{|V(\cH)\setminus U_k|}{r-1}\Big\rfloor > \frac nr.$$
Consider an $r$-uniform hypergraph $\mathcal B$ on $V(\cH)$, such that each edge of $\mathcal B$ has the form $\{u\}\cup W$, where $u\in U_k$, $W\in \cW$, and, moreover, each $u$ is contained in exactly $\Delta(\cH) - d_{\cH_k}(u)$ edges of $\mathcal B$ and each $W$ is contained in at most
$\lceil\frac {t}{|\cW|}\rceil \le r\eps\Delta(\cH)$ edges. Note that $\Delta_2(\cB)\le r\eps \Delta(\cH)$, as well as that for any $v\in V(\cH)\setminus U_k$ we have $d_{\cB}(v)\le r\eps \Delta(\cH)$. Consider the hypergraph $\cG:=\cH_k\cup \cB$. Then, clearly, $\delta(\cG)\ge (1-\eps)\Delta(\cH)$, but also
\begin{alignat*}{2}\Delta(\cG) &\leq \Delta(\cH_k) + r\eps \Delta(\cH)
&&\stackrel{\eqref{eq: cH'}}{\leq} (1+\eps^{1/4})\Delta(\cH),\\
 \Delta_2(\cG)&\leq \Delta_2(\cH_k)+ r\eps \Delta(\cH)
 &&\stackrel{\eqref{eq: cH'2}}{\leq }  \eps^{1/4}\Delta(\cH).\end{alignat*}

Since $\eps \ll \delta$, we are now in a position to apply Theorem~\ref{lem: Pippenger}, and decompose $\cG$ into a family $\mathcal F''$ of $(1+\delta^{5})\Delta(\cG)$ matchings. At least $(1-\delta^2)\Delta(\cH)$ of these matchings must cover at least $(1-\delta^2)n$ vertices of $V$.

Let $\mathcal F'$ denote the family of these almost-spanning matchings and let $\cF:=\{\cM\cap E(\cH):\cM\in \cF'\}$. We claim that the collection $\cF$ satisfies the assertion of the first part of the lemma, moreover, the algorithm which chooses one of the matchings from $\mathcal F$ uniformly at random and returns its intersection with $\mathcal H$ satisfies the assertion of the second part of the lemma.

To see this, first note that any matching $\cM\in \mathcal F'$ covers at least $(1-\delta^2)n$ vertices of $V$, and, since any edge in $\cG-\cH$
either entirely lies in $U$ or intersects $U_k$, the matching $\cM\cap E(\cH)$ covers at least $(1-\delta^2)n - r|U_k| \ge (1-\delta) n$ vertices of $V$.
Second, for each  $v\in V(\cH)$, the vertex $v$ belongs to $d_{\cH}(v) \pm |\mathcal{F''}\setminus \mathcal{F'}|\ge (1-\delta)\Delta(\cH)$ matchings from $\mathcal{F'}$ that cover $v$ by an edge from $\cH$. This proves (i).
To prove (ii), note that $|\mathcal F| = |\cF'|=(1\pm \delta^2)\Delta(\cH)$, hence for a randomly chosen $\cM\in \mathcal F$, for each $v\in V$, we have
$$\mathbb{P}[v\in V(\cM) ] = \frac{ d_{\cH}(v) \pm |\mathcal{F''}\setminus \mathcal F'| }{ |\mathcal{F}| } = \frac{d_{\cH}(v)}{|\mathcal F| } \pm 3\delta^2
\geq  1- \delta.$$
\end{proof}

For an edge-coloured graph $G$ and a given family $\cF$ of rainbow subgraphs of $G$, we denote by $\cF(v_1,v_2;c_1,c_2)$ the subfamily of all those graphs from $\cF$ which contain the vertices $v_1,v_2$ and edges of colours $c_1,c_2$. We define $\cF(v_1;c_1)$, $\cF(v_1,v_2)$, $\cF(c_1,c_2)$, $\cF(v_1)$ and $\cF(c_1)$ in a similar way. For $uw, u'w'\in E(G)$ we denote by $\cF(uw)$ the subfamily of graphs from $\cF$ that contain the edge $uw$, and define $\cF(uw,u'w')$ similarly. The next lemma is the key to the proof of Theorem~\ref{thm: approx decomp} and is also very important for the proofs of the other theorems from Section~\ref{sec2}.

\begin{lemma}\label{lem: random F decomp}Let $0<1/n \ll \eps\ll \delta, 1/f \le 1$.
Suppose that $F$ is a graph on $f\ge 3$ vertices and with $h\ge 1$ edges. Suppose that $G=(V,E)$ is an $n$-vertex graph and $\phi$ is an $m$-colouring of $G$. Consider a family $\cF$ of rainbow copies of $F$ in $G$ that satisfies the following requirements.
 \begin{enumerate}[label=\text{{\rm (A\arabic*)$_{\ref{lem: random F decomp}}$}}]
 \item\label{F decomp 1} For any $v,v_1,v_2\in V$ and $c_1,c_2\in [m]$ we have $$\max\{|\cF(v_1,v_2)|,|\cF(v_1;c_1)|, |\cF(c_1,c_2)|\} \leq \eps|\cF(v)|.$$
 \item\label{F decomp 2} For any $c\in [m]$ and $v\in V$ we have $|\cF(v)|\ge (1-\eps)|\cF(c)|$.
\item \label{F decomp 3}
For all $v\in V$ and $uw\in E$ we have \COMMENT{Note that $\cF(uw)\subset \cF(u,w)$ and thus $t$ is large due to \ref{F decomp 1} provided $e(G)>0$.}
\begin{align*}
(1\pm \eps)\frac{|\cF(v)|}{|\cF(uw)|}=\frac{f|E|}{h|V|}=:t.
\end{align*}
\item\label{F decomp 4} For any $uw\in E$ we have $|\cF(uw)|\ge 10\eps^{-1}\log n$.
\item\label{F decomp 5} For any $uw,u'w',u''w''\in E$ we have $\eps|\cF(uw)|\ge |\cF(u'w',u''w'')|$.

\end{enumerate}
Then there exists a randomized algorithm which always returns  $(1-\delta)t$ edge-disjoint  rainbow $\delta$-spanning $F$-factors $\cM_1,\ldots, \cM_{(1-\delta)t}$ of $G$, such that each $\cM_i$ consists of copies of $F$ from $\cF$ and for all $v\in V$ and $i\in[(1-\delta)t]$ we have
$$\mathbb{P}[ v\in V(\cM_i)] \ge 1 -\delta.$$
\end{lemma}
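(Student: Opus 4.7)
My plan is to construct the $F$-factors $\cM_1,\dots,\cM_T$ with $T := (1-\delta)t$ iteratively, invoking Lemma~\ref{lem: random matching}(ii) at each step. Starting with $\cF_1 := \cF$, at step $i$ let
\[
\cF_i := \{F'\in \cF : E(F')\cap E(\cM_j)=\es \text{ for all } j<i\}
\]
be the sub-family of copies whose edges are still unused, and build the $(f+h)$-uniform hypergraph $\cH_i$ on vertex set $V\cup [m]$ with hyperedges $\{V(F')\cup \phi(E(F')):F'\in \cF_i\}$. A matching in $\cH_i$ is precisely a collection of vertex- and colour-disjoint copies of $F$ from $\cF_i$, i.e.~a rainbow $F$-subgraph of $G$ using only still-available edges. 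Applying Lemma~\ref{lem: random matching}(ii) to $\cH_i$ with designated set $V$ produces a random matching $\cM_i$ covering at least $(1-\delta)n$ vertices of $V$ and satisfying $\Pro[v\in V(\cM_i)]\ge 1-\delta$ for each $v\in V$. By construction the $\cM_i$'s are pairwise edge-disjoint in $G$, and each is the required rainbow $\delta$-spanning $F$-factor.

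The substantive work is to verify that $\cH_i$ meets the hypotheses of Lemma~\ref{lem: random matching}(ii) at every step. Fix $\zeta$ with $\eps\ll\zeta\ll\delta$, set $D:=\max_{v\in V}|\cF(v)|$, and let $p_i$ denote the predicted value of $|\cF_i(v)|/|\cF(v)|$ at step $i$ (approximately $(1-(i-1)/t)^h$). Condition \ref{F decomp 3} already forces all $|\cF(v)|$ to be within $(1\pm 2\eps)$ of $D$ and all $|\cF(uw)|$ to be within $(1\pm 2\eps)$ of $D/t$, so the only non-trivial task is to track the evolution of these quantities. I would show by induction on $i$ that, with high probability, $|\cF_i(v)|=(1\pm\zeta)p_i|\cF(v)|$ and $|\cF_i(c)|=(1\pm\zeta)p_i|\cF(c)|$ for all $v\in V$ and $c\in[m]$, while the codegrees $|\cF_i(v_1,v_2)|,|\cF_i(v;c)|,|\cF_i(c_1,c_2)|$ stay below $\eps D$ (they only decrease, with initial bound from \ref{F decomp 1}). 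Together these give $\min_{v\in V} d_{\cH_i}(v)\ge (1-2\zeta)\Delta(\cH_i)$ and $\Delta_2(\cH_i)\le \eps D\ll \zeta p_T D\le \zeta\Delta(\cH_i)$, provided $\eps\ll\zeta p_T$, which the hierarchy $\eps\ll\zeta\ll\delta,1/f$ supplies (as $p_T\ge \delta^h$ and $h$ is bounded by $f$).

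The main obstacle is proving this concentration. I would set up a Doob martingale exposing $\cM_1,\dots,\cM_{i-1}$ in order. Since $\cM_j$ is a uniform random sample from a Pippenger-Spencer-style decomposition of the step-$j$ hypergraph into $\Theta(p_jD)$ matchings (as in the proof of Lemma~\ref{lem: random matching}), the probability that a fixed edge $uw\in E(G)$ lies in $E(\cM_j)$ is $\approx |\cF_j(uw)|/(p_jD)\approx 1/t$, matching the predicted decay $p_i$. A single-step perturbation of $\cM_j$ alters any tracked quantity by at most $L=O(hn\max_{uw}|\cF(uw)|/f)=O(hnD/(ft))$, while its conditional variance is $O(hD/t)$ by an Efron–Stein-type bound. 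Applying the variance-aware inequality Theorem~\ref{Azuma2} with these bounds gives the desired deviation probability $\exp(-\Omega(\zeta^2 p_T^2 D / h))$, which is $e^{-\omega(\log n)}$ precisely because of the logarithmic slack $|\cF(uw)|\ge 10\eps^{-1}\log n$ in \ref{F decomp 4}. The true subtlety lies in ensuring that the variance term dominates the Lipschitz term throughout the $T$ steps; one may need to expose the random choices inside $\cM_j$ at a finer granularity (using the internal randomness of Lemma~\ref{lem: random matching}) to control the per-step Lipschitz. A union bound over the $O(n^2+m^2+nm)$ quantities we monitor then delivers the inductive step with failure probability $o(1/T)$, so the entire process succeeds w.h.p.; the bound $\Pro[v\in V(\cM_i)]\ge 1-\delta$ follows from Lemma~\ref{lem: random matching}(ii) at step $i$ after absorbing an $o(1)$ conditioning loss into $\delta$.
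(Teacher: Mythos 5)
Your approach is genuinely different from the paper's and, as sketched, has a real gap in the concentration step. You iterate: build $\cH_i$ on $V\cup[m]$ from the surviving copies $\cF_i$, apply Lemma~\ref{lem: random matching}(ii), delete the used edges, and repeat. For this to work you must verify at \emph{every} step that $\cH_i$ is nearly regular on $V$ with $\Delta_2(\cH_i)\le\eps\Delta(\cH_i)$. You propose a Doob martingale exposing $\cM_1,\dots,\cM_{i-1}$ and claim a per-step Lipschitz constant $L=O(hnD/(ft))$. But this bound treats $\cM_{j+1},\dots,\cM_{i-1}$ as fixed when $\cM_j$ is perturbed, whereas they are functions of $\cM_j$: the matching lemma is applied to $\cH_{j+1}$, which depends on $\cM_j$, and a small change in $\cM_j$ can propagate to a completely different sequence $\cM_{j+1},\dots,\cM_{i-1}$. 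The true martingale differences $|\Exp[Y\mid\cM_1,\dots,\cM_j]-\Exp[Y\mid\cM_1,\dots,\cM_{j-1}]|$ are not controlled by the single-step edit distance you write down, and you yourself flag this (``the true subtlety lies in ensuring that the variance term dominates the Lipschitz term throughout the $T$ steps'') without resolving it. There are further unaddressed issues of the same flavour: the heuristic $p_i\approx(1-(i-1)/t)^h$ treats edge survival as independent, but edges are deleted in correlated blocks of $h$ (one $F$-copy at a time); and the codegrees need not decay at the same rate as the degrees, so the ratio $\Delta_2(\cH_i)/\Delta(\cH_i)$ could drift upward. Nibble-style analyses of iterative processes of this kind do exist, but they require a substantially more careful tracking argument than what is here, and none of the needed machinery is present in the hierarchy $\eps\ll\delta,1/f$.

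The paper avoids the iteration entirely. It first partitions $\cF$ randomly into $\cF_1,\dots,\cF_t$ (each copy independently into a uniform part), uses Chernoff to verify that every layer inherits the degree and codegree conditions, and then builds a \emph{single} hypergraph on vertex set $E\cup(V\times[t])\cup([m]\times[t])$ with one hyperedge $E(\bar F)\cup(V(\bar F)\times\{i\})\cup(\phi(E(\bar F))\times\{i\})$ per $\bar F\in\cF_i$. Edge-disjointness across layers is enforced structurally by sharing the $E$-vertices, while vertex- and colour-disjointness is per layer because of the index $i$. A single application of Lemma~\ref{lem: random matching}(ii), with designated set $V\times[t]$, then yields all $t$ almost-factors simultaneously, and the only concentration required comes from the independent Bernoulli partition of $\cF$ (plus the clean-up at the end to discard the few non-$\delta$-spanning layers). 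This is precisely the device that sidesteps the cascading dependence your martingale argument runs into.
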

Clearly, the union of all the $\cM_i$ covers all but at most a $2\delta$-proportion of edges of $G$.\COMMENT{The total number of edges covered is $(1-\delta)\frac{f|E|}{h|V|}\cdot \frac{(1-\delta)h|V|}{f}=(1-\delta)^2|E|.$}

\begin{proof}
The idea is to apply Lemma~\ref{lem: random matching} (ii) to a suitable auxiliary (multi-) hypergraph $\cH$. However, the choice of $\cH$ is not straightforward, since Lemma~\ref{lem: random matching} (ii) gives only a single random matching while we need an almost-decomposition. We can resolve this by turning both the edges and the vertices of $G$ into vertices of $\cH$. However, this gives rise to the issue that the potential degrees of vertices and edges in the corresponding auxiliary hypergraph are very different. This in turn can be overcome by the following random splitting process.

Consider a random partition $\cF_1,\dots, \cF_t$ of $\cF$ into $t$ parts, where for all $\bar F\in \cF$ and $i\in [t]$  we have $\bar F \in \cF_i$ with probability $1/t$ independently of all other graphs in $\cF$.  Using Lemma~\ref{Chernoff} combined with the fact that the expected value of $\cF(v)\cap \cF_i$ is sufficiently large (it is at least $9\eps^{-1}\log n$ by \ref{F decomp 4} and \ref{F decomp 3}) for each $v\in V(G)$, we can conclude that for any $uw\in E,\ v_1,v_2\in V$, $c,c_1,c_2\in [m]$ and $i,i'\in[t]$ we have
\COMMENT{Note that $\EXP[|\cF(v)\cap\cF_i|]=\frac 1t|\cF(v)|\ge 9\eps^{-1}\log n$. Using Lemma~\ref{Chernoff}, $$\Pro\big[\big||\cF(v)\cap\cF_i|-\EXP[|\cF(v)\cap\cF_i|]\big|\ge 2\eps^{1/2}\EXP[|\cF(v)\cap\cF_i|]\big]\le e^{-\eps \EXP[|\cF(v)\cap\cF_i|]}<n^{-9}.$$
Since $\EXP[|\cF(c)\cap \cF_i|]\le (1+\eps) \EXP[|\cF(v)\cap \cF_i|]$ by \ref{F decomp 2}, we also have
$$\Pro\big[|\cF(c)\cap\cF_i|-\EXP[|\cF(c)\cap\cF_i|]\ge 2\eps^{1/2}\EXP[|\cF(v)\cap\cF_i|]\big]\le e^{-\eps \EXP[|\cF(v)\cap\cF_i|]}<n^{-9}.$$
Together these two formulas imply that all \eqref{cond decomp 1} hold simultaneously with high probability.

Similarly, any of the families $\cF(v_1,v_2)\cap \cF_{i'},\ \cF(v_1;c_1)\cap \cF_{i'},\ \cF(c_1,c_2)\cap \cF_{i'}$ in expectation have size at most $\eps \EXP[|\cF(v)\cap\cF_i|]$. Thus, the probability that they surpass their expectation by $2\eps^{1/2} \EXP[|\cF(v)\cap\cF_i|]$ is at most $e^{-\eps^{1/2}\EXP[|\cF(v)\cap\cF_i|]}<n^{-9}.$ Thus, all inequalities \eqref{cond decomp 2} hold simultaneously with high probability.
}
\begin{align}
   \label{cond decomp 1}(1\pm 3\eps^{1/2}) |\cF(uw)|\overset{\ref{F decomp 3}}{=}&|\cF(v)\cap\cF_i |\overset{\ref{F decomp 2}}{\ge} (1-5\eps^{1/2})|\cF(c)\cap \cF_{i'}|, \\
   \label{cond decomp 2} 3\eps^{1/2} |\cF(v)\cap\cF_i |\overset{\ref{F decomp 1}}{\ge}& \max\{|\cF(v_1,v_2)\cap \cF_{i'}|,|\cF(v_1;c_1)\cap \cF_{i'}|, |\cF(c_1,c_2)\cap \cF_{i'}|\}.
\end{align}

Consider the hypergraph $\cH$ defined by
\begin{align}
V(\cH)&:=E\cup (V\times [t]) \cup ([m]\times[t])\ \ \ \ \ \ \ \ \ \  \text{and} \enspace \notag\\
\label{colorhgraph} E(\cH)&:= \big\{E(\bar F)\cup (V(\bar F)\times \{i\})\cup (\phi(E(\bar F))\times \{i\}) : \bar F\in \cF_i, i\in [t] \big\}.
\end{align}

Thus each edge of $\cH$ corresponds to some $\bar F\in\cF$. Condition \eqref{cond decomp 1} guarantees that the vertices from $E\cup (V\times [t])$ have roughly the same degree,\COMMENT{
This  is clear from \eqref{cond decomp 1} itself, since we can substitute different values of $v, u,w,c$.
}
while the vertices from $[m]\times [t]$ have degree that is at most $(1-8\eps^{1/2})^{-1}$ times the degrees of any vertex from $E\cup (V\times [t])$, but may be significantly smaller. Moreover, we have $\Delta_2(\cH)\le 3\eps^{1/2} \Delta(\cH)$ due to \eqref{cond decomp 2} and \ref{F decomp 5} (note here that vertices of $\cH$ that have different indices $i,i'\in[t]$ have zero codegree and that $|\cF(uw)|\le |\cF(u,w)|$ for any $uw\in E$). Therefore, we can apply Lemma~\ref{lem: random matching}~(ii) with $8\eps^{1/2}, \delta^3$ and $V\times[t]$ playing the roles of $\eps, \delta$ and $V$, respectively and obtain an algorithm producing a random matching $\cM$ of $\cH$, covering at least a $(1-\delta^3)$-proportion of vertices from $V\times [t]$ and such that each vertex of $V\times [t]$ is contained in $\cM$ with probability at least $1-\delta^3$. In particular, this implies that
\begin{equation}\label{eqspan t} \cM\text{ covers all but at most a }\delta\text{-proportion of }V\times \{i\}\text{ for at least }(1-\delta)t\text{ values of }i.
\end{equation}

Let $\cM_i'$ be the collection of all those $\bar F\in \cF_i$ which correspond to some edge of $\cM$. Then $\cM_i'$ forms a rainbow $c_i$-spanning $F$-factor in $G$ for some $c_i>0$, moreover, these factors are edge-disjoint for different $i_1,i_2\in[t]$. Furthermore, by Lemma~\ref{lem: random matching} for each $v\in V$ we have $\Pro[v\in V(\cM_i')]\ge 1-\delta^3$. However, $\cM'_i$ does not necessarily form a $\delta$-spanning $F$-factor.
 This can be fixed easily.  Since for each $i\in [t]$ the matching $\cM$ covers each vertex from $V\times \{i\}$ with probability $1-\delta^3$, for each $i$ with probability at least $1-\delta/2$ the factor $\cM_i'$ is $\delta$-spanning,\COMMENT{
 Suppose not, i.e. there exists a set of $>\delta$-spanning (``bad'') matchings with probability to appear $\ge \delta/2$. $$\Pro[x \text{ is not covered}]\ge \Pro[x \text{ is not covered}\mid \cM_i\text{ is bad}]\Pro[\cM_i\text{ is bad}]\ge \frac{\delta}2\Pro[x \text{ is not covered}\mid \cM_i\text{ is bad}].$$
 Averaging over the choice of $x$, the definition of ``bad'' matchings implies that $\mathbb E[\text{proportion of }x\in V\text{ which are not covered}\mid \cM_i\text{ is bad}]\ge \delta$. Thus, on average $\Pro[x \text{ is not covered}]\ge\delta^2/2$, which is a contradiction.
 }
 and thus with probability at least $1-\delta$ it is both $\delta$-spanning and covers a given vertex $v$.\COMMENT{
Note that we cannot simply return ``the first $(1-\delta)t$  $\cM_i$ that are $\delta$-spanning'', since we would not be able to guarantee that, once we fix a choice of $j$, the $j$-th matching in this decomposition would cover each vertex with probability at least $1-\delta$. The problem is that the $j$-th matching doesn't necessarily contain all large matchings $\cM_j$, and is overall a composite of matchings from different layers (values of $i\in[t]$). Thus, while having a large size every time, we loose the control over the probability that a given vertex is contained in the matching with high probability.
}
Moreover, the factors  $\cM_i'$ are $\delta$-spanning for at least $(1-\delta)t$ values of $i$ (cf. \eqref{eqspan t}).  Let the algorithm return the factors $\cM_i$, $i\le (1-\delta)t$, where $\cM_i:=\cM_i'$ if $\cM_i'$ is $\delta$-spanning, and otherwise $\cM_i:=\cM_j'$ for some $j> (1-\delta)t$, where $\cM_j'$ is a $\delta$-spanning factor not yet used to substitute for $\cM_{i'}$ with $i'<i.$ Note that for any given $v\in V$ and any $i\in[(1-\delta)t]$, we have $\Pro[v\in V(\cM_i)]\ge \Pro[v\in V(\cM_i)\wedge \cM_i = \cM_i']\ge 1-\delta$ as required.
\end{proof}

\subsection{Partitions} To have better control over the colours and vertices used when constructing the decompositions, we need to split vertices and colours into groups. The results in this section will be needed to ensure that the relevant properties of the original graph are inherited by its subgraphs induced by suitable random partitions.

We say that $\cV$ is \emph{a partition of a set $V$ chosen at random with  probability distribution $(p_1,\dots, p_r)$}, if $p_1,\dots, p_r$ are nonnegative real numbers satisfying $\sum_{i\in [r]} p_i\le 1$ and $\cV$ is a random variable such that for each $v\in V$, we have $\mathbb{P}[v\in V_i] = p_i$ independently at random.

\begin{lemma}\label{qr to sr}
Let $0<1/n \ll \eta \ll\zeta \ll \delta, d, 1/C\leq 1$.
Let $G'$ be a $(\eta,d)$-quasirandom $n$-vertex graph. Suppose that $\phi$ is a $(C n,\eta n)$-bounded $m$-colouring of $G'$. Then there exists a $(\zeta,d)$-quasirandom spanning subgraph $G$ of $G'$, such that for any $uv\in E(G)$ we have
\begin{equation}\label{qr 0} d_{G}(u,v)= (d^2\pm \zeta) n \enspace \text{and} \enspace c^{\phi}_{G}(u,v)  \leq \zeta n.\end{equation}
Moreover, the following holds.
For a random partition  $\cV$ of $V(G)$ with probability distribution $(p_1,\dots, p_r)$, where $p_i\ge n^{-1/2}$ for each $i\in[r]$, \COMMENT{The sentence ``and $p_i/p_j\ge\delta$ for each $i,j\in [r]$,'' was here. }
with probability at least $0.9$ we have:
 \begin{enumerate}[label=\text{{\rm (A\arabic*)$_{\ref{qr to sr}}$}}]
\item \label{qr 1} $|V_i|= (1\pm \zeta)p_i n$.
\item \label{qr 2} For all $i\neq j \in [r]$, the bipartite graph $G[V_{i},V_{j}]$ is $(\zeta,d)$-superregular.
\item \label{qr 3} For all $vw\in E(G)$ and $i\in [r]$, we have $|C_G^{\phi}(v,w)\cap V_i|\le \zeta |V_i|$ and
$d_{G,V_{i}}(v,w) = (d^2 \pm \zeta)|V_{i}|$.
\item \label{qr 4} For all $i\in [r]$, the graph $G[V_i]$ is $(\zeta,d)$-quasirandom.
\end{enumerate}
\end{lemma}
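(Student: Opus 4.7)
The plan is to construct $G$ by edge deletion and then analyse a random partition of $V(G)$. For the first part, I would delete from $G'$ every edge lying in $\Ir_{G'}(\eta^{1/10}, d) \cup \Ir_{G'}^{\phi}(\sqrt{\eta}\, n)$. By Lemmas~\ref{lem: irregular degree}(i) and~\ref{lem: colour irregular degree}, these two graphs have maximum degree at most $\eta^{1/10} n$ and $\sqrt{\eta}\, n$ respectively, so each vertex loses at most $2\eta^{1/10} n$ edges, which gives $d_G(v) = (d \pm 3\eta^{1/10})n$. For $uv \in E(G)$, avoiding both irregularity graphs yields $d_{G'}(u, v) = (d^2 \pm \eta^{1/10})n$ and $c_{G'}^{\phi}(u, v) \leq \sqrt{\eta}\, n$; passing from $G'$ to $G$ loses at most $4\eta^{1/10}n$ common neighbours of $u$ and $v$, so $d_G(u, v) = (d^2 \pm 5\eta^{1/10})n$ and $c_G^{\phi}(u, v) \leq \sqrt{\eta}\, n$, both well below $\zeta n$. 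Combined with $|\Ir_{G'}(\eta^{1/10}, d)| \leq |\Ir_{G'}(\eta, d)| \leq \eta n^2$ (from the $(\eta, d)$-quasirandomness of $G'$), this shows $G$ is itself $(\zeta, d)$-quasirandom and establishes~\eqref{qr 0}.

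For the partition, assign each $v \in V(G)$ independently to $V_i$ with probability $p_i$. Since $p_i n \geq \sqrt{n}$, Chernoff's inequality (Lemma~\ref{Chernoff}) gives $|V_i| = (1 \pm \zeta)p_i n$ with failure probability $\exp(-\Omega(\zeta^2 \sqrt{n}))$, yielding~\ref{qr 1}. For~\ref{qr 3}, fix $vw \in E(G)$ and $i \in [r]$: the variable $d_{G, V_i}(v, w)$ is a sum of $d_G(v, w) = (d^2 \pm 5\eta^{1/10})n$ independent Bernoullis of parameter $p_i$, while $|C_G^{\phi}(v, w) \cap V_i|$ is a similar sum over a set of size at most $\sqrt{\eta}\, n$. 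Since $\sqrt{\eta} \ll \zeta$, Chernoff yields $d_{G, V_i}(v, w) = (d^2 \pm \zeta)|V_i|$ and $|C_G^{\phi}(v, w) \cap V_i| \leq \zeta|V_i|$, each with failure probability at most $\exp(-\Omega(\zeta^2 \sqrt{n}))$, and a union bound over the $O(rn^2)$ events gives~\ref{qr 3}.

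The main technical step is to verify the regularity conditions~\ref{qr 2} and~\ref{qr 4}. The vertex-degree parts $d_{G[V_i]}(v) = (d \pm \zeta)|V_i|$ and $d_{G, V_j}(v) = (d \pm \zeta)|V_j|$ are direct Chernoff computations (with the error tightened to $\zeta^6$ where needed for the next step). For regularity in~\ref{qr 2}, I apply Theorem~\ref{thm: almost quasirandom} to $G[V_i, V_j]$ with partition $(V_i, V_j)$ and parameter $\zeta^6$; its hypothesis reduces to bounding the number of pairs $u_1 u_2$ inside $V_i$ with $d_{G, V_j}(u_1, u_2) \neq (d^2 \pm \zeta^6)|V_j|$, and symmetrically inside $V_j$. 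Pairs lying in $\Ir_G(5\eta^{1/10}, d)$ contribute at most $p_i^2 \cdot \eta n^2 \ll \zeta^6|V_i|^2$ in expectation (using $\eta \ll \zeta^{60}$), while for the remaining pairs Chernoff delivers concentration at the correct codegree with negligible failure mass; Markov's inequality then controls the irregularity graph with high probability. Theorem~\ref{thm: almost quasirandom} returns $(\zeta, d)$-regularity, and together with the vertex-degree control this gives $(\zeta, d)$-superregularity. An identical dichotomy argument applied inside a single part establishes~\ref{qr 4}. A final union bound over the $O(r^2)$ bipartite pieces and $O(r)$ single parts keeps the total failure probability below $0.1$.
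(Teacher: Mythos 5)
Your construction of $G$ and the treatment of \ref{qr 1}, \ref{qr 3}, and the vertex--degree parts of \ref{qr 2}/\ref{qr 4} match the paper's proof. The gap is in the step that bounds $e(\Ir[V_i,V_j])$ (and $e(\Ir[V_i])$) before invoking Theorem~\ref{thm: almost quasirandom}: you compute $\EXP[e(\Ir[V_i])]\le p_i^2\eta n^2$ and then invoke Markov's inequality to upgrade this to a high-probability statement, with a final union bound over $O(r^2)$ pieces. That does not work in the stated generality. Markov gives failure probability $\frac{\EXP}{\text{target}}\approx\eta/\zeta^6$ per piece, which must be at most $0.1/r^2$ to survive the union bound; since only $p_i\ge n^{-1/2}$ is assumed, $r$ may be as large as $\sqrt n$, and the requirement $\eta\ll\zeta^6/n$ is not implied by the hierarchy $1/n\ll\eta\ll\zeta$. (Even in the paper's own applications $r\sim\log n$, so the needed $\eta\ll\zeta^6/\log^2 n$ would still not be available.)

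The paper avoids this by exploiting the deterministic degree bound $\Delta(\Ir)\le\zeta^{10}n$ that you already established: for each fixed vertex $v$, $|N_\Ir(v)\cap V_j|$ is a sum of at most $\zeta^{10}n$ independent Bernoulli$(p_j)$ variables with mean at most $\zeta^{10}p_jn$, so Chernoff gives $\Pr\bigl[|N_\Ir(v)\cap V_j|>\zeta^7p_jn\bigr]\le\exp(-\Omega(\zeta^7p_jn))\le\exp(-\Omega(\sqrt n))$; a union bound over all $n$ vertices and all $O(n)$ pairs $(i,j)$ then gives $\Delta(\Ir[V_i,V_j])\le\zeta^7p_jn$ for all $i,j$ simultaneously with probability $1-o(1)$, whence $e(\Ir[V_i])\le\zeta^7p_in\,|V_i|\le\zeta^6|V_i|^2$ deterministically on that event. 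You should replace the expectation-plus-Markov step with this vertex-by-vertex Chernoff argument; the rest of your proof then goes through.
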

Note that $G$ contains all but at most a $2\zeta/d$-fraction of the edges of $G'$.
\begin{proof} Let $\Ir$ be the graph with vertex set $V(G')$ and edge set $E(\Ir_{G'}(\eta^{1/10},d))\cup E(\Ir^{\phi}_{G'}(\eta^{1/2} n))$, that is, every edge of $\Ir$ corresponds to a pair of vertices which either has ``wrong'' codegree or ``wrong'' monochromatic codegree. We first show that $\Ir$ has small maximum degree.  Since $G'$ is a $(\eta,d)$-quasirandom graph and $\phi$ is locally $\eta n$-bounded, by Lemmas~\ref{lem: irregular degree} and~\ref{lem: colour irregular degree} we have
\begin{align}\label{eq: irregularity graphs}
\Delta(\Ir) \leq \Delta(\Ir_{G'}(\eta^{1/10},d)) + \Delta(\Ir_{G'}^{\phi}(\eta^{1/2} n))
\leq \eta^{1/10} n + \eta^{1/2} n \le \zeta^{10} n.
\end{align}
Consider the graph $G:=G'-E(\Ir)$. For each $uv\notin \Ir$ (and so in particular for each $uv \in E(G)$), we have
\begin{align}\label{eq: dc value irr}
d_{G}(u,v)= (d^2\pm \zeta^{2}) n \enspace \text{and} \enspace c^{\phi}_{G}(u,v)  \leq \zeta^{2} n.
\end{align}
 Clearly, $G$ is $(\zeta^2,d)$-quasirandom, and \eqref{eq: dc value irr} implies \eqref{qr 0}.  Now, a standard application of Chernoff bounds (Lemma~\ref{Chernoff}) implies that \ref{qr 1} and \ref{qr 3} hold with probability $0.99$. For the same reasons, $d_{G,V_{i}}(v) = (1\pm \zeta/2)p_i d_{G}(v)$ for all $v\in V(G), i\in [r]$ with probability $0.99$ (this implies the ``super'' part of the superregularity from \ref{qr 2}).\COMMENT{So far, we were simply partitioning polynomially many sets (``neighbors of $v$ in $V_i$'', ``common neighbors of $v,w$ connected by edges of the same colour'' etc) and claiming that all of them have high probability to have size close to the expectation. Just note that in expectation they are (roughly) at least $p_in\ge n^{1/2}$, so the calculations will work easily. It is essentially the same for the two other applications.} Note that $p_i\ge n^{-1/2}$ implies that $r\le n^{1/2}$.

 Finally,  by \eqref{eq: irregularity graphs} and Lemma~\ref{Chernoff}, with probability $0.99$ for all $i,j\in [r]$ the maximum degree of  a vertex from $V_i$ in $\Ir[V_i,V_j]$ is at most $\zeta^7p_jn$,\COMMENT{For a change, I explain this application. All the others are done similarly. The expected degree of any vertex from $V_i$ in this graph is at most $\zeta^{10}|V_j|$. Since all the vertices are included in $V_j$ independently and using Lemma~\ref{Chernoff}, the chances that it depasses 7 times the expected value is at most $e^{-\zeta^{10}|V_j|}<n^{-10}$.} and thus the number of edges of $\Ir$ that connect vertices of $V_i$ is at most $\zeta^7 p_i n |V_i| \leq \zeta^6 |V_i|^2$ for all $i\in [r]$.\COMMENT{Old text: ``the number of edges of $\Ir$ that connect vertices of $V_i$ and $V_j$ is at most  $\zeta^7p_jn|V_i|\le \zeta^6\min\big\{|V_i|^2,|V_j|^2\big\}$.''
 Now it's changed into the current one. Note that, for applying Theorem~\ref{thm: almost quasirandom}, we need to consider the irregular pairs of vertices which entirely lies in either $V_i$ or $V_j$. We don't care the codegree of $u$ and $v$ where $u\in V_i$ and $v\in V_j$.
 Because of this change, the sentence `$p_i/p_j \geq \delta$' at the statement is gone.
 } Applying Theorem~\ref{thm: almost quasirandom}, we obtain that $G[V_i,V_j]$ is $(\zeta,d)$-regular. One can similarly bound the number of irregular pairs in each $G[V_i]$, and, combined with the bounds for the degrees and codegrees obtained above, it follows that \ref{qr 4} holds with  probability at least $0.99$. Overall, all these events hold simultaneously with probability at least $0.9$.
\end{proof}

\begin{lemma} \label{lem: graph partition}
Let $n,r\in \mathbb{N}$ and $0<1/n \ll \zeta\ll 1/C <1$. Assume that $\ell\ge \frac{n}{\log^3n}$.
Suppose that $G$ is an $n$-vertex graph with at most $Cn$ edges and $\Delta(G)\leq \ell$, and $\cV=(V_1,\dots, V_r)$  is a partition of $V(G)$ chosen at random with probability distribution $(p_1,\dots, p_r)$ with $p_i\geq \log^{-2}{n}$. Let $p:= \min_{i\in [r]}\{ p_i\}$.
  Then with probability at least $1-2r^2e^{-\frac{\zeta^3p n}{\ell}}$ we have for all $i\ne j\in [r]$
 $$ e(G[V_i,V_{j}]) =  2p_ip_j  e(G)\pm \zeta p_ip_j n \ \ \ \ \text{and} \ \ \ \ e(G[V_i]) = p_i^2e(G)\pm \zeta p_i^2 n.$$
\end{lemma}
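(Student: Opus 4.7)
The proof reduces to a concentration estimate for each of $O(r^2)$ pair counts, combined via a union bound. Linearity of expectation applied to the independent bin assignments $c(v)$ gives the main terms $\EXP[e(G[V_i,V_j])] = 2p_ip_j e(G)$ and $\EXP[e(G[V_i])] = p_i^2 e(G)$ directly.

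For concentration of $X := e(G[V_i,V_j])$ (the diagonal case $X = e(G[V_i])$ is analogous), I would apply a vertex-exposure Doob martingale $X_k := \EXP[X \mid c(v_1),\dots,c(v_k)]$ together with the variance-sensitive martingale inequality, Theorem~\ref{Azuma2}. Exposing $c(v_k)$ can only change the conditional-expectation contributions of the indicators $\mathbf{1}[\{c(u),c(v_k)\}=\{i,j\}]$ for edges $v_k u$ incident to $v_k$; there are at most $d_G(v_k) \le \ell$ such edges and each contribution changes by at most $1$, so $|X_k - X_{k-1}| \le \ell$, giving the Lipschitz constant $\vartheta = \ell$.

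A case analysis on whether each neighbour of $v_k$ is already placed in $V_i$, in $V_j$, in some other bin, or still unplaced, shows $X_k - X_{k-1}$ has the form $c_i \mathbf{1}[c(v_k)=i] + c_j \mathbf{1}[c(v_k)=j] + \text{(const)}$ for real coefficients determined by the past. The standard identity $\mathrm{Var}(a\mathbf{1}_i + b\mathbf{1}_j) \le p_i a^2 + p_j b^2$, combined with bounds on $c_i, c_j$ in terms of $d_G(v_k)$ and the bin occupancies of its neighbours, yields $\sigma_k^2 \le O((p_i+p_j)\,d_G(v_k)^2)$; summing via $\sum_v d_G(v)^2 \le \Delta(G)\cdot 2e(G) \le 2Cn\ell$ gives $\sum_k \sigma_k^2 \le O(Cn\ell(p_i+p_j))$. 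Plugging $\lambda = \zeta p_i p_j n$ and $\vartheta = \ell$ into Theorem~\ref{Azuma2}, and using the hypotheses $p_i \ge \log^{-2} n$, $\ell \ge n/\log^3 n$, and $\zeta \ll 1/C$ to simplify the denominator, yields the per-pair bound $\Pro[|X-\EXP[X]| \ge \zeta p_ip_j n] \le 2e^{-\zeta^3 pn/\ell}$. A union bound over the at most $r^2$ choices of $(i,j)$ (including $i=j$) then completes the proof.

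The hardest step is calibrating the variance bound. The Lipschitz constant $\ell \ge n/\log^3 n$ is too large for plain Azuma (Theorem~\ref{Azuma}) to yield any useful exponent, so it is essential to exploit the Bernoulli factor $p_i+p_j$ coming from exposing only a single bin-coordinate per step. The worst-case bound $|c_i|,|c_j| \le d_G(v_k)$ gives only $\sigma_k^2 \le (p_i+p_j)\,d_G(v_k)^2$, whereas heuristically $c_i$ and $c_j$ are of order $p_j d_G(v_k)$ and $p_i d_G(v_k)$, which would improve the variance by a further factor of $p_ip_j$; passing from this typical control to an almost-sure one may require a stopping-time or Freedman-style refinement in order to pin down the stated exponent cleanly.
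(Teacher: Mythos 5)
Your framework—a vertex-exposure Doob martingale plus Theorem~\ref{Azuma2}, with a union bound over $O(r^2)$ pairs—matches the paper's. However, the variance estimate you propose is too weak to reach the stated exponent, and you have correctly spotted this yourself without resolving it. With $\sigma_k^2 \le (p_i+p_j)\,d_G(v_k)^2$ and $\sum_k d_G(v_k)^2 \le 2Cn\ell$, the exponent from Theorem~\ref{Azuma2} (after plugging $\lambda = \zeta p_ip_j n$ and $\vartheta = \ell$) is of order $\frac{\zeta^2 p_i^2 p_j^2 n}{C(p_i+p_j)\ell}$, which at $p_i=p_j=p$ is $\asymp \frac{\zeta^2 p^3 n}{C\ell}$. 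The target is $\frac{\zeta^3 pn}{\ell}$, so you are short by a factor of roughly $p^2/(C\zeta)$, and since $p$ is permitted to be as small as $\log^{-2}n$ while $\zeta, C$ are constants, this tends to zero. So the argument as written does not close.

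The missing idea is not a stopping-time/Freedman refinement but a much simpler deterministic device. The paper orders the vertices $v_1,\dots,v_n$ by decreasing degree, sets $t:=2Cn^{1/3}$ (so $d(v_k)\le n^{2/3}$ for $k>t$), and \emph{discards the $O(n^{2/3})$ edges running between high-degree vertices}. After this modification, when $v_k$ with $k\le t$ is exposed, none of its neighbours has yet been assigned to a bin; hence the martingale increment is a deterministic function of $\mathbf 1[v_k\in V_i]$, namely $p_j(1-p_i)d(v_k)$ or $-p_ip_jd(v_k)$. (The paper also works with the \emph{directed} count $\vec e(G[V_i,V_j])$ so that only $v_k\in V_i$ matters for the increment.) This converts your ``heuristic'' coefficients $c_i\sim p_jd(v_k)$ into exact almost-sure ones, yielding the sharp bounds $|X_k-X_{k-1}|\le p_j\ell$ and $\sigma_k^2\le p_ip_j^2 d(v_k)^2$ for $k\le t$. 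For $k>t$ the crude bounds $|X_k-X_{k-1}|\le n^{2/3}$ and $\sum_{k>t}\sigma_k^2 \le 2Cn^{5/3}$ suffice because they are negligible. Altogether $\sum_k\sigma_k^2 \lesssim Cp_ip_j^2 n\ell$ and $\vartheta \le p_j\ell$, which is what makes Theorem~\ref{Azuma2} deliver the stated exponent $\zeta^3 p n/\ell$ for all $p\ge\log^{-2}n$. Without this degree-ordering-plus-edge-deletion step, the worst-case variance bound leaves an unavoidable gap.
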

\begin{proof}
Let $v_1,\ldots, v_n$ be the vertices of $G$ in the decreasing degree order. Put $t:= 2Cn^{1/3}$. Since $e(G)\le Cn$, we have   $d(v_k)\le n^{2/3}$ for $k> t$. Fix $i,j\in[r]$. We now count edges with the first (in the ordering) vertex in $V_i$ and the second in $V_j$. We denote this quantity by $\vec{e}(G[V_i,V_j])$.   Consider a martingale $X_0,\ldots, X_n$, where $$X_k:=\Exp\big[\vec e(G[V_i,V_j])\mid V_i\cap \{v_1,\ldots, v_k\}, V_j\cap \{v_1,\ldots, v_k\}\big].$$
We aim to apply Theorem~\ref{Azuma2} to this martingale.  In the notation of that theorem, for $k>t$, we clearly have $|X_k-X_{k-1}|\le d(v_k)\le n^{2/3}$. Moreover, $\sum_{k=t}^n\sigma^2_k\le \sum_{k=t}^nd^2(v_k)\le n^{2/3}\sum_{k=t}^n d(v_k)\le 2Cn^{5/3}$.


 We now suppose that $k\le t$. Without loss of generality, we assume that there are no edges in $G$ between vertices $v_{k}, v_{k'}$ for $k,k'\le t$. Indeed, this accounts for at most $N:=4C^2n^{2/3}$ edges, which is negligible, and we will take care of this later.
 Take  $k\le t$ and fix $V_i\cap \{v_1,\ldots, v_{k-1}\}, V_j\cap \{v_1,\ldots, v_{k-1}\}$. Then $X_{k}-X_{k-1}$ is the following  random variable:\COMMENT{For fixed $V_i\cap \{v_1,\ldots, v_{k-1}\}, V_j\cap \{v_1,\ldots, v_{k-1}\}$, $X_{k-1}$ is simply a constant equal to $d_1 p_j+d_2 p_ip_j+d(v_k)p_ip_j$, where $d_1$ is the sum of the degrees of vertices from $V_i\cap \{v_1,\ldots, v_{k-1}\}$ (recall that no edges go between the first $t$ vertices),  $d_2$ is the total number of edges in $G-\{v_1,\ldots, v_{k}\}$. Similarly, $X_k$ has the first two terms, but the third term is replaced by an indicator random variable: $X_k =d_1 p_j+d_2 p_ip_j+\chi_kd(v_k)p_j$, where $\chi_k=1$ iff $v_k\in V_i$. From here we obtain the displayed formula for $X_k-X_{k-1}$.}
$$X_{k}-X_{k-1}=\begin{cases}
                              p_j(1-p_i)d(v_k), & \mbox{if } v_{k}\in V_i \\
                                         -p_ip_jd(v_k) & \mbox{otherwise}.
                                       \end{cases}$$
From this formula we can easily conclude that, first, $|X_k-X_{k-1}|\leq p_jd(v_k)\le p_j \ell$, and, second, $\mathrm{Var}[X_k\mid X_{k-1},\ldots, X_1]= \EXP[(X_k-X_{k-1})^2\mid X_{k-1},\ldots, X_1]\le p_ip_j^2d^2(v_k)=:\sigma_k^2$.\COMMENT{
$\EXP[(X_k-X_{k-1})^2]\le p_i(p_j(1-p_i)d(v_k))^2 +(1-p_i)(p_ip_jd(v_k))^2\le p_ip_j^2d^2(v_k)$
}
Thus, $\sum_{k=1}^{t}\sigma^2_k \le 2C p_ip_j^2n\ell.$ Altogether, with $\vartheta$ defined as in Theorem~\ref{Azuma2}, we have
$$\sum_{k=1}^{n}\sigma^2_k \le 2Cp_ip_j^2n\ell+2Cn^{5/3}\le 3Cp_ip_j^2\ell n \ \ \ \text{and }\ \ \ \ \ \vartheta\le \max\{p_j \ell,n^{2/3}\}\le p_j\ell.$$
(This is the only place where we make use of the lower bound on $\ell$.) Substituting into Theorem~\ref{Azuma2}, we obtain \begin{align}\label{eq216}\mathbb P\Big[|X_k-p_ip_je(G)|\le \frac{\zeta}3 p_ip_j n\Big]\le & 2\exp\Big(-\frac{\big(\frac{\zeta}3 p_ip_j n\big)^2}{6Cp_ip_j^2\ell n+\frac{\zeta}3 p_ip_jn\cdot p_j\ell}\Big)\le 2e^{-\frac{\zeta^3p_in}{\ell}}.\end{align}
Note that $N\le \frac{\zeta}{6} p_ip_j n$, and thus \eqref{eq216}, with $\zeta/3$ replaced by $\zeta/2$ on the left hand side, also holds in the situation when we may have edges between $v_k,\ v_{k'}$ for $k,k'\le t$. The fact that $e(G[V_i]) = \vec e(G[V_i,V_i])$ and $e(G[V_i,V_j]) = \vec e(G[V_i,V_j])+ \vec e(G[V_j,V_i])$ if $i\ne j$, together with a union bound over all possible choices of $i,j\in [r]$, implies the result.\end{proof}

The next lemma allows us to extend the counting results of Lemmas~\ref{counting partite} and~\ref{counting quasirandom} to the case when the graph is sparse.
\begin{lemma}\label{lem: colour partition}
Let $n,r\in \mathbb{N}$ and $0<1/n \ll \eps\ll 1/f, 1/C <1$. Assume that $\ell\ge n^{2/3}$.
Suppose that $G$ is an $n$-vertex graph and $\phi$ is a $(Cn,\ell)$-bounded $m$-colouring of $G$. Fix a $k$-vertex subset $U$ of $V(G)$.
Suppose that $\cI=(I_1,\dots, I_r)$ is a partition of $[m]$ chosen at random with probability distribution $(p_1,\dots, p_r)$, where $p_i\ge \log^{-2}n$.
Suppose that $\cF$ is a collection of $f$-vertex $h$-edge rainbow subgraphs of $G$ such that $U$ is an independent set of each $R\in \cF$. Assume that, for some $a\ge 1$, the set $U$ has at most $a$ edges incident to it in each $R\in\cF$. For $j\in [r]$, with probability at least  $1- 2\exp\Big(-\frac{\eps^{4} p_j^{2a-1} n}{\ell}\Big)$ the number of graphs $R$ in $\cF$
which are subgraphs of $G(\phi,I_j)$ is
$ p_j^{h} |\cF|\pm \eps p_j^hn^{f-k}.$
\end{lemma}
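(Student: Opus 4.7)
The plan is to apply the martingale inequality Theorem~\ref{Azuma2} to the Doob martingale that exposes the indicator variables $Y_c := \1[c \in I_j]$ (which are independent Bernoulli$(p_j)$) one colour $c \in [m]$ at a time. The target random variable is
$X := \sum_{R \in \cF} \prod_{e \in E(R)} Y_{\phi(e)}$,
and since each $R \in \cF$ is rainbow, $\Exp[X] = p_j^h |\cF|$.

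The key difficulty is that a one-phase exposure would only yield $p_j^{2h-1}$ in the exponent, whereas we want $p_j^{2a-1}$. To save the factor $p_j^{-2(h-a)}$ I would expose the colours in two phases, determined by the set $E_U \subseteq [m]$ of colours appearing on some edge of $G$ incident to $U$: Phase~A exposes $Y_c$ for $c \in E_U$, and Phase~B exposes $Y_c$ for $c \notin E_U$. For each $R \in \cF$ let $T_R$ be the set of colours of the at most $a$ edges of $R$ incident to $U$, so $T_R \subseteq E_U$ and $|T_R| \le a$, and let $A_R := \phi(E(R)) \setminus T_R$. In the ``clean'' case $A_R \cap E_U = \emptyset$ for every $R$, the conditional expectation $\Exp[X \mid Y^{(A)}] = \sum_R p_j^{h - |T_R|} \prod_{c \in T_R} Y_c$ is a polynomial of degree at most $a$ in $Y^{(A)}$; the general case needs minor additional bookkeeping (absorbing ``shared'' colours into the boundary side) but yields the same bound.

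Phase~A is the bottleneck. Local $\ell$-boundedness gives $|\{R \in \cF : c \in T_R\}| \le k\ell \cdot n^{f-k-1}$ for each $c \in E_U$ (at most $k\ell$ edges of colour $c$ are incident to $U$, each extending to at most $n^{f-k-1}$ copies in $\cF$), while summing gives $\sum_{c \in E_U} |\{R : c \in T_R\}| = \sum_R |T_R| \le a |\cF| \le a n^{f-k}$. Combining with the coefficient $p_j^{h-|T_R|}$ in $\Exp[X \mid Y^{(A)}]$ and writing $X_k - X_{k-1} = (Y_{c_k} - p_j) S_k$, this yields the bounds $\vartheta \lesssim p_j^{h-a} k\ell n^{f-k-1}$ and $\sum_c \sigma_c^2 \lesssim a k\ell p_j^{2h-2a+1} n^{2f-2k-1}$ required by Theorem~\ref{Azuma2}. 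Applying it with deviation $\tfrac12 \eps p_j^h n^{f-k}$ then yields Phase~A failure probability at most $2\exp\bigl(-\Omega(\eps^2 p_j^{2a-1} n /(ak\ell))\bigr)$, which lies within the target $2\exp(-\eps^4 p_j^{2a-1} n/\ell)$ by the hierarchy $\eps \ll 1/f$.

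For Phase~B I would condition on the Phase~A exposure and reapply Theorem~\ref{Azuma2}, now to $X = \sum_{R \in \tilde \cF} \prod_{c \in A_R} Y_c$, where $\tilde \cF := \{R : \prod_{c \in T_R} Y_c = 1\}$ has size $\lesssim p_j^a n^{f-k}$ with high probability by Phase~A. Since $c \notin E_U$ forces the colour-$c$ edge of any such $R$ to lie entirely outside $U$, one has $|\{R : c \in A_R\}| \le Cn \cdot n^{f-k-2}$, giving Phase~B failure probability $\le 2\exp(-\Omega(\eps^2 p_j^{2h-a-1} n))$; under $\ell \ge n^{2/3}$ and $p_j \ge \log^{-2} n$ this is stronger than the Phase~A bound. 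The main obstacle is therefore to execute Phase~A's variance computation carefully, exploiting the coefficient $p_j^{h-a}$ (and not merely the factor $p_j$ from $\mathrm{Var}[Y_c]$) when squaring $\sigma_c$, so as to extract the desired saving of $p_j^{2(h-a)}$.
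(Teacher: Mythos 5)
Your plan mirrors the paper's in its main idea (a colour-exposure martingale applied to $L=\sum_{R\in\cF}\prod_{e\in R}\1[\phi(e)\in I_j]$, together with the observation that the ``at most $a$ edges at $U$'' hypothesis should let one save $p_j^{-2(h-a)}$), and your bookkeeping in the clean case reproduces the paper's variance bound $\sum_i\sigma_i^2\lesssim k\ell\,p_j^{2(h-a)+1}n^{2f-2k-1}$ and $\vartheta\lesssim p_j^{h-a}\ell n^{f-k-1}$. But there is a genuine gap at the step you describe as ``minor additional bookkeeping.''

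The clean-case hypothesis $A_R\cap E_U=\emptyset$ for every $R\in\cF$ is not at all typical. Take for instance $G=K_n$, $k=1$ (so $U=\{u\}$), and a proper colouring: then $E_U$ is all $n-1$ colours appearing at $u$, and for most $R$ the colours on its edges away from $u$ will intersect $E_U$. When a colour $c\in E_U$ also appears on some edge of $G$ \emph{not} incident to $U$, exposing $Y_c$ in your Phase~A affects \emph{all} $R$ with $c\in\phi(E(R))$, not merely those with $c\in T_R$. The former set has size up to $Cn\cdot f!\,n^{f-k-2}=\Theta(n^{f-k-1})$ (no $\ell$), while the latter is $\le k\ell\, n^{f-k-1}$ (where $\ell$ appears). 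Moreover, for an $R$ with $c\in A_R\cap E_U$ the conditional coefficient is only bounded by $p_j^{|A_R\setminus E_U|}$, which can be $p_j^0=1$: once several colours of $A_R$ lie in $E_U$, the exponent $h-a$ evaporates. Your proposed fix---``absorbing shared colours into the boundary side,'' i.e.\ replacing $T_R$ by $T_R\cup(A_R\cap E_U)$---runs the wrong way: it can make $|T'_R|$ exceed $a$ (indeed approach $h$), which \emph{increases} the coefficient $p_j^{h-|T'_R|}$ toward $1$ rather than preserving the $p_j^{h-a}$ saving. The worst-case Phase~A variance then reverts to $\Theta(p_j\,n^{2f-2k-1})$, giving only $\exp(-\Omega(\eps^2 p_j^{2h-1}n))$, far short of the claimed $p_j^{2a-1}$.

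The paper avoids this by splitting colours not according to $E_U$ but by popularity: the colours are ordered so that $C_1\ge C_2\ge\cdots$ (where $C_i=|\{R\in\cF:i\in\phi(E(R))\}|$) and the threshold is $t:=hf!\,n^{1/2}$. The crucial two facts are: (1) the number of $R$ having an edge of a top-$t$ colour that is \emph{not} incident to $U$ is at most $t\cdot Cn\cdot f!\,n^{f-k-2}\le n^{f-k-1/3}$, which is negligible compared to $\eps p_j^hn^{f-k}$ and can be discarded up front; after discarding, every occurrence of a top-$t$ colour in a remaining $R$ is incident to $U$, so $C_i\le f!k\ell n^{f-k-1}$ \emph{and} at most $a$ colours of each such $R$ lie in $[t]$, yielding the coefficient $p_j^{h-a}$. (2) For colours below rank $t$, the codegree is automatically tiny, $C_i\le n^{f-k-1/2}$, so their martingale increments are negligible regardless of where they lie relative to $U$. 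Your $E_U$-based two-phase split has no analogue of either step and so cannot control the shared colours. (Separately, your Phase~B exponent looks like it should be $p_j^{a+1}$ rather than $p_j^{2h-a-1}$, though this is immaterial since Phase~A is the bottleneck.)
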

\begin{proof} The proof of this lemma is similar to that of Lemma~\ref{lem: graph partition}.
Fix $j\in [r]$ and let $L$ be the random variable equal to the number of graphs $R \in \cF$ such that the colour of every edge of $R$ belongs to $I_j$. As $R$ contains $h$ edges whose colours are all different, we have $\mathbb{E}[L] = p_j^{h} |\cF|$. Order the colours in $[m]$ by the number of graphs $R\in \cF$ that contain that colour, from the larger value to the smaller.  Put $t:=hf!n^{1/2}$. The number of $R\in \cF$ that contain some edge $e$ of colour $i\le t$, where $e$ is not adjacent to one of the vertices of $U$, is at most $t\cdot Cn \cdot f!n^{f-k-2}\le n^{f-k-1/3}$. We assume for the moment that there are no such $R$, and will deal with them later.

For each $i\in [m]$, we let $X_i = \mathbb{E}[L \mid I_j \cap [i]]$. Then $X_0,X_1,\dots, X_m$ is an exposure martingale. Let $C_{i}$ be the number of $R\in\cF$ which contain an edge of colour $i$. Let $C_i(j)$ be  the number of $R\in \cF$ that are coloured with colours from $I_j$ and which contain an edge of color $i$. It is easy to see that\COMMENT{If $C_t> n^{f-k-1/2}$, then $\sum_{i=1}^tC_i\ge t n^{f-k-1/2}>hf!n^{f-k}\ge h|\cF|$, which is impossible.}  for $i\ge t$ we have $C_i\le n^{f-k-1/2}$. This implies that $|X_{i}-X_{i-1}|\le n^{f-k-1/2}$ for $i\ge t$, moreover, in the notation of Theorem~\ref{Azuma2}, $\sum_{i=t}^{m}\sigma_i^2\le \sum_{i=t}^m C_i^2\le C_t\cdot h|\cF|\le h f! n^{2f-2k-1/2}$.

Take $i<t$ and fix $I_j \cap [i-1].$ Then the random variable $X_i-X_{i-1}$ has the following form:\COMMENT{By analogy with the previous lemma, the difference of $X_i$ and $X_{i-1}$ are only in graphs from $\cF$ that contain colour $i$. Note also the definition of $C_i(j)$.
}
$$X_{i}-X_{i-1}=\begin{cases}
                              \mathbb{E}\big[C_i(j)\mid i\in I_j, I_j\cap [i-1]\big]- \mathbb{E}\big[C_i(j)\mid I_j\cap [i-1]\big], & \mbox{if } i\in I_j, \\
                                        - \mathbb{E}\big[C_i(j)\mid I_j\cap [i-1]\big] & \mbox{otherwise}.
                                       \end{cases}$$
Take any graph $R$ containing an edge of colour $i$. By the assumption, all edges of $R$ not ending in $U$ have colours in $[t,n]$, and therefore, $ \mathbb{E}\big[C_i(j) \mid i\in I_j, I_j\cap [i-1]\big]\le C_i\cdot p_j^{h-a}$ and $ \mathbb{E}\big[C_i(j)\mid I_j\cap [i-1]\big]=p_j  \mathbb{E}\big[C_i(j)\mid i\in I_j, I_j\cap [i-1]\big]$. From here we may conclude that $|X_i-X_{i-1}|\le C_i\cdot p_j^{h-a}$ and, moreover, in terms of Theorem~\ref{Azuma2}, $\mathrm{Var}[X_i\mid X_{i-1},\ldots, X_1]=  \mathbb{E}[|X_i-X_{i-1}|^2\mid X_{i-1},\ldots, X_1]\le\COMMENT{$p_j \big((1-p_j)C_i p_j^{h-a}\big)^2+(1-p_j)\big(p_jC_i p_j^{h-a}\big)^2=\Big((1-p_j)^2+(1-p_j)p_j\Big)C_i^2p_j^{2(h-a)+1}$} C_i^2 p_j^{2(h-a)+1}=:\sigma_i^2.$

Next, we have to bound $C_{i}$. Since $U$ is an independent set of $R$ for every $R \in \cF$, for any edge $uv$ of $R$, at least one of $u,v$ lies outside $U$. Moreover, if $u,v\notin U$ then by our assumption $\phi(uv)>t$. Hence, we obtain that $C_{i}$ equals the number of $R\in \cF$ which contain an edge $uv$ of colour $i$ which is incident to $U$.\COMMENT{but not contained in}
Hence, $C_{i}$ is at most $f!n^{f-k-1}$ times the number of edges with colour $i$ in $G$ which are incident to $U$. Thus
$C_{i} \leq  f!k\ell n^{f-k-1}$ for each $i\in [t-1]$.
Moreover, $\sum_{i\in [t-1]} C_i \leq h|\cF|$.
In terms of Theorem~\ref{Azuma2}, this implies that
$$\sum_{i\in [m]}\sigma_i^2 \le p_j^{2(h-a)+1}\sum_{i=1}^{t-1}C_i^2+hf!n^{2f-2k-1/2}\le \eps^{-1/2} p_j^{2(h-a)+1}\ell n^{2f-2k-1}. $$
We also have $$\vartheta\le \max\{n^{f-k-1/2}, p_j^{h-a}\cdot kf!\ell n^{f-k-1}\}\le \eps^{-1/2}p_j^{h-a}\ell n^{f-k-1}.$$
Substituting the right hand sides of the displayed formulas above in the inequality in Theorem~\ref{Azuma2}, we have
\begin{multline*}
\mathbb{P}\big[L \ne \big(1\pm \frac{\eps}2\big) p_j^{h}n^{f-k}  \big] \le 2\exp\Big(-\frac{\eps^{3} p_j^{2a-1}n^{2f-2k} }{\ell n^{2f-2k-1}+
\eps p_j^{a-1}\ell n^{2f-2k-1}}\Big)\le 2\exp\Big(-\frac{\eps^{4} p_j^{2a-1} n}{\ell}\Big).\end{multline*}
Finally,  the at most $n^{f-k-1/3}$ potential  $R\in \cF$ that contain an edge of colour $i\le t$, not incident to $U$, may change the value of $L$ by at most $\frac{\eps}2 p_j^h n^{f-k}$.
\end{proof}

\section{Approximate decompositions into near-spanning structures}\label{sec5}

\subsection{Proof of Theorem~\ref{thm: approx decomp}} The proof of this theorem is based on an application of Lemma~\ref{lem: random F decomp}. It suffices to carry out some preprocessing and to verify that the conditions on the graph and the colouring are fulfilled.

If $h=0$, then there is nothing to prove, so we assume $h\geq 1$.
If $f\leq 2$ (and thus $F$ is an edge), then we replace $F$ by two disjoint edges, so we may assume that $f\geq 3$.

 Choose $\eta,\zeta,\eps, \delta$ and $n_0$  such that  $0< 1/n_0\ll\eta \ll \zeta\ll \eps \ll \delta\ll \alpha, d_0, 1/f,1/h$. Consider $G'$ as in the statement of the theorem. Let $V:=V(G')$ and  define $a,t\in \mathbb{N}$ by
 \begin{equation}\label{eqnumber} a:= \frac{2h d^{h-1} n^{f-2}}{|\Aut(F)|} \enspace \text{and} \enspace t:=  \frac{fdn}{2h}.\end{equation}
Note that $a \geq \eta n$ as $f\geq 3$.

By Lemma~\ref{qr to sr}, there is a $(\zeta,d)$-quasirandom subgraph $G$ of $G'$ which satisfies \eqref{qr 0}.
Lemma~\ref{counting quasirandom} then implies that, for all $v\in V$ and $e\in E(G)$, we have
\begin{align}\label{eq: two things}
r_{G}(F,v) = (1\pm \eps/3)t \cdot a \enspace \text{and} \enspace r_{G}(F,e) = (1\pm \eps/3) a.
\end{align}
We claim that we can apply Lemma~\ref{lem: random F decomp} with $\cF$ being all rainbow copies of $F$ contained in $G$ and $\alpha/2$ playing the role of $\delta$. Equations \eqref{eqnumber} and \eqref{eq: two things} imply that conditions \ref{F decomp 3} and \ref{F decomp 4} are satisfied (since $a\ge \eta n$). Condition~\ref{F decomp 2} is satisfied since the number of copies of $F$ containing colour $i$ is at most
$$e(G(\phi,i))\cdot \max_{e\in G} r_{G}(F,e)\le (1+\eta)\frac{fdn}{2h}\cdot (1+\eps/3)a\le (1+\eps/2)ta.$$

To verify the codegree conditions \ref{F decomp 1}, first note that for each $u \neq v \in V$, there are at most $f! n^{f-2}$ copies of $F$ containing both $u$ and $v$, so we have
\begin{align}\label{eq: codegree codeg}
|R_{G}(F,u)\cap R_{G}(F,v)| \leq f! n^{f-2}.
\end{align}
For $c\neq c'\in [m]$, we have
\begin{align}\label{eq: codegree codeg 2}
\hspace{-1cm} &|\{ \bar F\in R_{G}(F):  \{c,c'\} \subseteq \phi(E(\Bar{F}))\}| \leq
\sum_{uv\in E(G(\phi,c))} (d_{G(\phi,c')}(u)+d_{G(\phi,c')}(v)) f! n^{f-3} \nonumber\\
&+ \sum_{uv \in E(G(\phi,c'))} \sum_{ u'v' \in E(G(\phi,c)-\{u,v\})} f! n^{f-4}
\leq fn \cdot 2\eta n \cdot f! n^{f-3} + (fn)^2\cdot f! n^{f-4} \leq \eta^{1/2} n^{f-1}.
\end{align}\COMMENT{We count copies of $F$ containing two edges $e,e'$ of colour $i$ and $i'$ such that either $|e\cap e'|=1$ or $e\cap e' =\emptyset$. }
Similarly, one can obtain that for $c\in [m]$ and $v\in V$ one has \begin{equation}\label{eq: codegree codeg 3}R_{G}(F,v)\cap\{\Bar{F}: c\in \phi(\Bar{F})\}\le \eta^{1/2}n^{f-1}.\end{equation}
\COMMENT{\begin{align}
\hspace{-1cm} &|\{ \Bar{F}\in R_{G}(F):  v\in \Bar{F},c \in \phi(E(\Bar{F}))\}| \leq
\sum_{u\in N_{G(\phi,c)}(v)} f! n^{f-2} \nonumber\\
&+ \sum_{uw \in E(G(\phi,c))} f! n^{f-3}
\leq \eta n f! n^{f-2} + 2f! n^{f-2} \leq 4f! \eta n^{f-1}.
\end{align}}
Thus \ref{F decomp 1} holds. Finally, for any two edges $e_1,e_2\in E(G)$ we have $r_G(F,e_1\cup e_2)\le h^2n^{f-3}$ and thus \ref{F decomp 5} is satisfied.

Therefore, since $\eta, \eps\ll\alpha$ the conditions of Lemma~\ref{lem: random F decomp} are satisfied, we obtain the desired $\alpha$-decomposition into rainbow $\alpha$-spanning $F$-factors.

\subsection{Proof of Theorem~\ref{thm: near spanning cycle}}
Let us first present a sketch of the proof.
\begin{itemize}
  \item We start with splitting the graph into two smaller parts $V_1,V_2$ and one larger part $V_3$. Then we split the colours into a smaller part $I_1$ and a larger part $I_2$.
  We make sure that most of the vertices and colours ``behave sufficiently nicely'': the graphs between the parts are $\eps$-regular, the graphs inside the parts are quasirandom, and each colour appears roughly the ``expected'' number of times between and inside the parts (cf. Claim~\ref{cla1}, \eqref{prep -1} and \eqref{prep 0}). We restrict our attention only to the colours and vertices that ``behave nicely''.
  \item Using Theorem~\ref{thm: approx decomp}, we find an approximate decomposition of $V_3$ into rainbow almost-spanning factors consisting of long cycles using only the colours from $I_2$.
  \item For each cycle in each of these almost-spanning factors, we randomly select a ``special'' edge and remove it. The endpoints of these edges will be used to glue the cycles together into one long cycle. Again, we restrict our attention to cycles and colours that ``behave sufficiently nicely'': we discard all colours that appear ``unexpectedly'' many times between the endpoints of the ``special'' edges and the parts $V_1, V_2$, as well as all the cycles containing vertices of too high degree in these ``bad'' colours.
  \item Finally, we apply Lemma~\ref{blowup} using ``good'' colours from $I_1$ to link up the endvertices of the  removed edges via $V_1$ and $V_2$. The fact that in the previous step we removed the ``special'' edges randomly guarantees us that we will be able to successively perform the connecting step for all the almost-spanning factors without causing the graph on $V_1,V_2$ and the colours in $I_1$ ``deteriorate'' too much during this process.
\end{itemize}

Let us now make this precise. We choose auxiliary constants according to the hierarchy
\begin{small}\begin{equation*} 1/n_0\ll\eta\ll \zeta\ll \zeta_1\ll\epsilon\ll\eps_1\ll 1/s\ll \delta\ll\delta_1\ll\delta_2\ll \gamma\ll \beta\ll\alpha, d_0.\end{equation*}\end{small}
Take a graph $G''$ and an $m$-colouring $\phi$ of $G''$ satisfying the conditions of the theorem. Apply Lemma~\ref{qr to sr} to $G''$ with  probability distribution $(q_1,q_2,q_3) = (\delta_1, \delta_1, 1-\gamma)$, to obtain a $(\zeta,d)$-quasirandom spanning subgraph $G'\subseteq G''$, which, for any  $\cV'':=(V''_1,V_2'',V''_3)$ chosen according to the above probability distribution, satisfies properties \ref{qr 1}--\ref{qr 4} with probability at least $0.9$.

Using Lemma~\ref{lem: graph partition} for each colour $c$ and the graph $G'(\phi,c)$,  we conclude that with probability at least $1-18e^{-\zeta^3\delta_1/\eta}\ge 1-\eta$ we have
\begin{align}\notag e\big(G'[V''_{1},V''_{2}](\phi,c)\big)\le & 2\delta_1^2 e(G'(\phi,c))+\zeta \delta_1^2 n \\
\label{prep -1} \le& 3\delta_1^2 n.\end{align}
Note that the number of colours of size at least $3\delta_1^2n$ in the {\it original} colouring $\phi$ is at most $e(G'')/(3\delta_1^2n)\le \eta^{-1/3}n$. Using Markov's inequality, we also conclude that with probability at least $0.99$ at most an $\eta^{2/3}$-fraction of these does not satisfy \eqref{prep -1}, so altogether \eqref{prep -1} holds for all but  $\eta^{1/3}n$ colours. Similarly,
\begin{align} e\big(G'[V''_3](\phi,c)\big)\le& (1-\gamma)^2e(G'(\phi,c))+\zeta^2 (1-\gamma)^2 n\notag\\\le& \label{prep 0}\frac 12(1+\zeta)(1-\gamma)^2dn\end{align}
holds for all but $\eta^{1/3}n$ colours with probability $0.99$.

Choose  $\cV''=(V_1'',V_2'', V_3'')$ which satisfies conditions \ref{qr 1}--\ref{qr 4}, as well as \eqref{prep -1}, \eqref{prep 0} for all colours apart from a set $EC$ of at most $\eta^{1/4}n$ ``exceptional'' colours.

Let $EV$ be the set of all those vertices $v$ with $d_{G'(\phi,EC)}(v)\ge \zeta n$. Clearly, $|EV|\le \zeta n$. Put $G^*:=(G'[V_1'',V_2'',V_3'']\setminus EV)-e(G'(\phi,EC))$ and $V'_j:=V''_j\setminus EV$ for $j\in[3]$. By Proposition~\ref{prop: edge deletion regular} (ii), we can find $V_j'\subseteq V_j''\setminus EV$ with $|V_j'|\ge (1-\zeta_1)|V_i''|$ such that
 \begin{equation}\label{eqstar} G:=G^*[V_1',V_2',V_3']\text{ satisfies \ref{qr 1}--\ref{qr 4} with }V_j', \zeta_1\text{ playing the roles of }V_j,\zeta,\end{equation}
 as well as \eqref{prep -1} and \eqref{prep 0} for {\it all} colours present in the colouring of $G$. We also note that $G$ has at least a $(1-\gamma^{1/2})$-fraction of vertices and edges of $G''$, therefore, an approximate decomposition into almost-spanning cycles for $G$ would be an approximate decomposition into almost-spanning cycles for the initial graph $G''$.

\begin{claim}\label{cla1}
 A partition $\cI:=(I_1,I_2)$ of the colours from $[m]\setminus EC$, chosen at random with probability distribution $(p_1,p_2)=(\gamma,1-\gamma)$, with probability at least $0.9$ satisfies the following. There exist subsets $V_i\subseteq V_i'$ for $i\in [3]$, such that
\begin{enumerate}[label=\text{{\rm (V\arabic*)$_{\ref{cla1}}$}}]
\item \label{prep 1} $|V_i|\ge (1-\zeta_1)|V_i'|$.
\item \label{prep 2} The graph $G[V_1,V_2](\phi, I_1)$ is $(\eps^{1/6},\gamma d)$-regular.
\item \label{prep 3} For each vertex $v\in V_3$ and $i\in[2]$ we have $d_{G(\phi, I_1),V_i}(v)=(1\pm \eps)\gamma d |V_i|$.
\item \label{prep 4} The graph $G[V_3](\phi, I_2)$ is $(\eps, (1-\gamma)d)$-quasirandom.
\end{enumerate}
\end{claim}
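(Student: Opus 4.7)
The plan is to show, via concentration inequalities, that the random subgraphs $G(\phi, I_1)$ and $G(\phi, I_2)$ inherit approximately the ``right'' degrees and codegrees from $G$; discarding a small set of vertices where concentration fails then yields $V_1, V_2, V_3$ satisfying \ref{prep 1}--\ref{prep 4}.

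\emph{Degree concentration.} For each vertex $v$ and each target part $V_k'$,
\[ d_{G(\phi, I_1), V_k'}(v) \;=\; \sum_{c \in [m] \setminus EC} X_c \cdot d_{G(\phi, c), V_k'}(v), \]
where $X_c := \mathbb{1}[c \in I_1]$, is a sum of independent random variables each bounded by $\eta n$ (local $\eta n$-boundedness) with expectation $\gamma (d \pm \zeta_1) |V_k'|$ by \eqref{eqstar}. A Hoeffding-type bound via Theorem~\ref{Azuma2} gives concentration within $(\eps/3)\gamma d |V_k'|$ with failure probability $\exp(-\Omega(\eps^2 \gamma^2 d / \eta))$; an analogous bound holds for $I_2$. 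Since $\eta \ll \eps, \gamma, d_0$, a union bound over all pairs $(v, k)$ leaves at most a $\zeta_1^2$-fraction of ``bad'' vertices in each part.

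\emph{Codegree concentration.} For any $u, w$ (both in $V_1'$, or both in $V_3'$) and any target part $V_k'$,
\[ d_{G(\phi, I_1), V_k'}(u, w) \;=\; \sum_{x \in N_G(u, w) \cap V_k'} \mathbb{1}[\phi(ux) \in I_1] \cdot \mathbb{1}[\phi(wx) \in I_1]. \]
The two indicators are independent unless $\phi(ux) = \phi(wx)$; the monochromatic contribution is controlled via $|C_G^\phi(u, w) \cap V_k'| \le \zeta_1 |V_k'|$ (inherited from \eqref{eqstar}). Hence the expectation equals $(\gamma^2 d^2 \pm 2\zeta_1)|V_k'|$. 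Exposing the $X_c$ one colour at a time produces a martingale in which each step changes the conditional expectation by at most $d_{G(\phi, c), V_k'}(u) + d_{G(\phi, c), V_k'}(w) \le 2 \eta n$, with sum of squared increments $O(\eta d n^2)$; Theorem~\ref{Azuma} then gives concentration within $(\eps/3)(\gamma d)^2 |V_k'|$ with failure probability $\exp(-\Omega(\eps^2 \gamma^4 d^3 \delta_1^2 / \eta))$, absorbable by a union bound over the $O(n^2)$ relevant pairs.

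\emph{Cleanup and conclusion.} Let $W$ be the set of vertices where some degree or codegree bound above fails; the union bounds give $|W| \le \zeta_1 n$ with probability at least $0.9$. Set $V_i := V_i' \setminus W$, so \ref{prep 1} is immediate. Condition \ref{prep 3} follows from degree concentration, with a trivial correction for the removal of up to $\zeta_1 n$ neighbours (using $\zeta_1 \ll \eps$). For \ref{prep 2}, the codegree bound ensures that the bipartite irregularity graph of $G[V_1, V_2](\phi, I_1)$ at density $\gamma d$ has at most $\eps |V_1|^2$ edges, and Theorem~\ref{thm: almost quasirandom} then delivers the $(\eps^{1/6}, \gamma d)$-regularity. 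Finally, \ref{prep 4} is immediate from degree and codegree concentration in $G[V_3](\phi, I_2)$, since the quasirandomness definition only requires the codegree bound to hold for most pairs. The main subtlety is ensuring that the monochromatic contribution to codegrees stays dominated by the $\gamma^2$-term after concentration, which is precisely why the hierarchy $\eta \ll \zeta \ll \zeta_1 \ll \eps$ has been set up as it is.
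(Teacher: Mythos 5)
Your high-level strategy (martingale concentration of degrees and codegrees of the colour-split graphs, discard bad vertices, then invoke Theorem~\ref{thm: almost quasirandom}) is essentially the paper's; you simply unfold the concentration calculation by hand rather than quoting Lemma~\ref{lem: colour partition}, which itself is proved by exactly this kind of Azuma argument. However, there are two genuine gaps in the codegree step.

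First, you invoke \eqref{eqstar} to assert $|C_G^\phi(u,w)\cap V_k'|\le\zeta_1|V_k'|$ and (implicitly, when you write ``the expectation equals $(\gamma^2 d^2\pm 2\zeta_1)|V_k'|$'') also $d_{G,V_k'}(u,w)=(d^2\pm\zeta_1)|V_k'|$, for \emph{arbitrary} pairs $u,w$. But condition~\ref{qr 3}, which \eqref{eqstar} carries over, only guarantees these two estimates for pairs $uw\in E(G)$ -- not for non-adjacent pairs, which are exactly the pairs whose codegree you need to control when establishing regularity of $G[V_1,V_2](\phi,I_1)$ and quasirandomness of $G[V_3](\phi,I_2)$. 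Without these estimates your expectation computation is simply not available for most of the pairs you need it for. The paper resolves this by first isolating, for each choice of parts, a small exceptional set of pairs that either have the wrong codegree in $G$ (at most $\zeta_1 n^2$ pairs, via superregularity from~\ref{qr 2}) or have monochromatic codegree exceeding $\eta^{1/2}n$ (at most $\eta^{1/2}n^2$ pairs, via Lemma~\ref{lem: colour irregular degree}), and only then applies concentration to the remaining pairs, feeding all three exceptional sets $EP_1\cup EP_2\cup EP_3$ into the bound on $e(\Ir_{G,\cV})$ before applying Theorem~\ref{thm: almost quasirandom}. You need the analogous preliminary classification; as written, your irregularity-graph edge count is unjustified.

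Second, and more minor: you repeatedly say ``a union bound over all pairs,'' but the per-pair failure probability you compute is a constant of the form $\exp(-\Omega(\cdot/\eta))$, independent of $n$. A union bound over $O(n)$ or $O(n^2)$ events of constant probability yields nothing. What is needed (and what the paper does) is Markov's inequality applied to the \emph{count} of bad vertices or pairs, which gives that with probability at least $0.99$ the count is a negligible fraction. This is an easy fix, but ``union bound'' as stated does not do the job.

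Your remaining cleanup phrasing ``the set of vertices where some degree or codegree bound above fails'' is also slightly off: codegree failure is a property of pairs, not vertices, and those bad pairs are not removed but absorbed into the irregularity-graph count. You do seem to intend this (given the next sentence), but the description of $W$ should only refer to degree failures.
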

\begin{proof}
Consider a partition $\cI:=(I_1,I_2)$  of the colours  as in the claim. Apply Lemma~\ref{lem: colour partition} with partition $\cI$ for each vertex $v\in V(G)$, where $\eps^2$ plays the role of $\eps$,  $U:=\{v\}$ and $\cF$ is simply the collection of all edges in $G$ from $v$ to $V_i'$. Keeping in mind  that, by \eqref{eqstar}, $G$ is $(\zeta_1,d)$-superregular between the parts and $(\zeta_1,d)$-quasirandom inside the parts, we conclude that  for all $j\in[2]$ and $i\in[3]$ we have \begin{equation}\label{prep5} d_{G(\phi,I_j),V_i'}(v)=(1\pm \eps/2)p_j d |V_i'|\end{equation}
 with probability at least $1-12\exp(-\frac{\eps^8\gamma}{\eta})>1-\eta$. Using Markov's inequality, with probability at least $0.99$ the number of vertices not satisfying \eqref{prep5} is at most $\eta^{1/2}n$. Delete these vertices, obtaining sets $V_i\subseteq V_i'$, $i\in [3]$. Note that they satisfy \ref{prep 1}, and that the condition \ref{prep 3} is fulfilled as well. (Indeed, $d_{G(\phi,I_j),V_i}(v)=(1\pm \eps/2)p_j d |V_i'|\pm\eta^{1/2}n=(1\pm \eps)p_j d |V_i|$.)\COMMENT{We use this later to prove\ref{prep 2}.}

  Fix $i_1,i_2\in[3]$. Since $G$ satisfies \ref{qr 2} with $\zeta_1$ playing the role of $\zeta$, it follows  that the total number of pairs of vertices $u,v\in V_{i_1}'$, for which $d_{G,V_{i_2}'}(u,v)\ne (d^2\pm \zeta_1)|V_{i_2}'|$ is at most $\zeta_1 n^2$. Moreover, the total number of pairs $u,v$, which have more than $\eta^{1/2}n$  monochromatic paths $P_2$ with ends in $v$ and $u$ is at most $\eta^{1/2}n^2$ by Lemma~\ref{lem: colour irregular degree}.\COMMENT{(Note that the number of all such $P_2$, not necessarily monochromatic, is the codegree of $u$ and $v$.)}
  Consider any pair of vertices $u,v\in V_{i_1}$ which does not belong to either of these two sets $EP_1$ and $EP_2$ of ``exceptional'' pairs. Then we conclude that the number of rainbow (i.e. two-coloured) $P_2$ with ends in $u$ and $v$ and  middle vertex in $V_{i_2}$ is $(d^2\pm 3\zeta_1)|V_{i_2}|$. Here we both used that $|V_{i_2}|\ge (1-\zeta_1)|V_{i_2}'|$ and that all but $\eta^{1/2}n$ copies of $P_2$ with ends in $v,w$ are rainbow. Apply Lemma~\ref{lem: colour partition} with $U:=\{u,v\}$, $\eps^2$ playing the role of $\eps$ and $\cF$ being a collection of rainbow $P_2$ with ends in $u$ and $v$ and  middle vertex in $V_{i_2}$. We conclude that for each $j\in [2]$ the number of rainbow $P_2$ which end in $u,v$, have their middle vertex in $V_{i_2}$ and are coloured with colours from $I_j$ is
\begin{equation}\label{prep6} (1\pm \eps/3)(d^2\pm 3\zeta_1)p_j^2|V_{i_2}|=(1\pm\eps/2)d^2p_j^2|V_{i_2}|\end{equation}
 with probability $1-4\exp(-\frac{\eps^8\gamma^3}{\eta})>1-\eta$. Using Markov's inequality, with probability $0.99$ the number of pairs $(u,v)\notin EP_1\cup EP_2$ violating \eqref{prep6} is at most $\eta^{1/2}n^2$. For any pair $u,v\in V_{i_1}$ not belonging to the set $EP_3$ of these ``exceptional'' pairs we have\COMMENT{note that we also have to account for the monochromatic codegree of two vertices, which is at most $\eta^{1/2}n$.}
 \begin{equation}\label{prep7} d_{G(\phi, I_j),V_{i_2}}(u,v)=(1\pm \eps/2)d^2p_j^2|V_{i_2}|\pm \eta^{1/2}n=(1\pm \eps)d^2p_j^2|V_{i_2}|.\end{equation}
 Proceed in a similar way for all choices of $i_1,i_2\in [3]$. Then the union of the sets $EP_1\cup EP_2\cup EP_3$ of all exceptional pairs (taken over all choices of $i_1,i_2\in[3]$) has size at most $9\zeta_1n^2+9\eta^{1/2}n^2+9\eta^{1/2}n^2\le \eps^{2}\delta_1^2n^2$.
In particular, we may conclude that all but at most an $\eps$-proportion of pairs in $V_3$ satisfy \eqref{prep7} with  $j=2$ and $i_2=3$. Together with \eqref{prep5} this implies that $G[V_3](\phi, I_2)$ is $(\eps, (1-\gamma)d)$-quasirandom, i.e., \ref{prep 4} holds. Similarly, by Theorem~\ref{thm: almost quasirandom}, property~\ref{prep 2} is satisfied.
 \end{proof}

After this preprocessing step, we are ready to proceed with the construction of our almost-decomposition. First, apply Theorem~\ref{thm: approx decomp} to $G[V_3](\phi,I_2)$ for $F:=C_s$  and with $3\eps, \beta$ playing the roles of $\eta, \alpha$ (recall that $\eps\ll1/s\ll\delta\ll \beta $). Indeed, to see that we can apply Theorem~\ref{thm: approx decomp}, first note that the colouring on $G[V_3](\phi,I_2)$ is locally $\eps |V_3|$-bounded since $\eta n\le \eps |V_3|$. Moreover, due to \eqref{prep 0}, it is $\frac 12 (1+\zeta)(1-\gamma)^2dn\le \frac 12(1+\eps)(1-\gamma)d|V_3|$-bounded.\COMMENT{Note that the average density in $G[V_3](\phi, I_2)$ is  $\ge (1-\eps)(1-\gamma)d$.} As a result, we obtain a $\beta$-decomposition of $G[V_3](\phi,I_2)$ into rainbow $\beta$-spanning $C_s$-factors. Denote by $\cL_i'$ the $i$-th factor from this decomposition, and let $n_1$ be their total number. By deleting some cycles if necessary, we may assume that each factor includes the same number $n_2'$ of copies of $C_s$, where $n_2'\ge (1-2\beta)\frac ns$. That is, $\cL_i':=\bigcup_{j=1}^{n_2'} C_i^j$, where $C_i^j$ are the $s$-cycles forming $\cL_i'$.
 Thus \begin{equation}\label{eqspan1}
        |V(\cL_i')|\ge (1-2\beta)n \ \ \ \ \ \text{for each }i\in [n_1].
      \end{equation}
      The union of all the  $\cL_i'$ covers all but a $4\beta$-fraction of the edges of the initial graph $G''$.

The last step of the proof is  to combine (most of) the cycles in each $\cL_i'$ into one large cycle using the vertices from $V_1$, $V_2$ and the colours from $I_1$. For all $i\in[n_1]$ and $j\in [n_2']$ select an edge $e_i^j=x_i^jy_i^j$ in $C_i^j$ independently uniformly at random. Put $U'_i:=\bigcup_{j=1}^{n_2'}\{x_i^j,y_i^j\}$. We claim that the following two properties have non-zero probability to be satisfied simultaneously:
\begin{itemize}
  \item[\textbf A] Each vertex $v\in V_3$ belongs to at most $\delta n$ selected edges.
  \item[\textbf B] For each $i\in [n_1]$ define $I^i$ to be the set of colours $c\in I_1$ such that $e(G[V_1\cup V_2, U_i'](\phi, c))>\delta n$. Then $e(G[V_1\cup V_2, U_i'](\phi, I^i))\le \delta e(G[V_1\cup V_2, U_i'](\phi, I_1))$, as well as $e(G(\phi, I^i))\le \delta n^2$.
\end{itemize}
Let us verify this claim. For each $i\in[n_1]$, any given $v\in V_3$ belongs to at most one $C_i^j$, and thus it belongs to the corresponding $e_i^j$ with probability at most $2/s$. Using Lemma~\ref{Chernoff} and a union bound, the probability that the property \textbf{A} does not hold is at most $2n e^{-\delta n}\le e^{-\delta n/2}.$

 Since $e(G[V_1\cup V_2, U_i'])\le \frac{5\delta_1 n^2}s$ by \ref{qr 1}, the definition of $I^i$ implies that $|I^i|\le \delta n$, and so in particular $e(G(\phi,I^i))\le \delta n^2$. For any fixed $i\in[n_1]$ and a colour $c\in I_1$, the expected value of $e(G[V_1\cup V_2, U_i'](\phi, c))$ is at most $2n/s$, and so by Markov's inequality, $c\in I^i$ with probability at most $\frac{2}{\delta s}<\delta^3$. Therefore, the expected number of edges in $G[V_1\cup V_2,U_i']$ having colours from $I^i$ is at most $\delta^3 e(G[V_1\cup V_2, U_i'](\phi, I_1))$. Using  Markov's inequality again, with probability at least  $1-\delta^2$ the number of such edges is at most
$\delta e(G[V_1\cup V_2, U_i'](\phi, I_1))$.
Combining this bound for different values $i$, we obtain that property \textbf{B} is satisfied with probability at least $(1-\delta^2)^{n_1}>e^{-\delta n/3}$.\COMMENT{We use that $n_1\le n$ and that for any $\alpha<\beta$ there exists $x_0>0$, such that for any $0<x<x_0$ we have $(1-x)^{\alpha}>e^{-\beta x}$.} Therefore, with positive probability both \textbf{A} and \textbf{B} are satisfied. Fix a choice of edges satisfying both \textbf A and \textbf B simultaneously.

For each $i\in[n_1]$, define $J_i\subseteq[n_2']$ to be the set of indices such that $j\in J_i$ if and only if at least a $\delta_1$-proportion of edges in $G[V_1\cup V_2,z](\phi,I_1)$ are coloured in colours from $I^i$ for at least one $z\in \{x_i^j,y_i^j\}$.
Due to \textbf B and \ref{prep 3}, we have \begin{equation}\label{eqspan2}
                                             |J_i|\le \delta_1 n_2'.
                                           \end{equation}
By \eqref{eqspan1} we have \begin{equation}\label{eqspan3}
                             \Big| \bigcup_{j\in[n_2']\setminus J_i} V(C_i^j)\Big|\ge (1-3\beta)n.
                           \end{equation}
By disregarding some cycles if necessary, we assume that the $J_i$ have the same cardinality for any $i$ and that the cycles are ordered in such a way that $[n_2']\setminus J_i = [n_2]$ for some $n_2<n_2'$. Put $U_i:= \bigcup_{j=1}^{n_2}\{x_i^j,y_i^j\}$. Then the following holds.
\begin{itemize}
  \item[\textbf C] For all $i\in [n_1]$, $z\in U_i$ and $q\in [2]$ we have $d_{G(\phi,I^i),V_q}(z)\le \delta_1 d_{G(\phi,I_1),V_1\cup V_2}(z)\overset{\ref{prep 3}}{\le} 5\delta_1\gamma d|V_q|$.
\end{itemize}

Finally, we are ready to apply Lemma~\ref{blowup}. For each $i\in [n_1]$ in turn, we define a mapping $\psi_i$ as follows. Consider the graph $H$ consisting of $n_2$ vertex-disjoint copies of $P_3$, say, $H:=\bigcup_{j=1}^{n_2}P_3^j$, where   $P_3^j:=w_0^jw_1^jw_2^jw_3^j$ is a path of length $3$. Consider the partition $\cX =(X_0,X_1,X_2)$ of $V(H)$, where $X_q :=\{w_q^j: j\in [n_2]\}$ for $q=1,2$, and $X_0:=\{w_0^j,w_3^j: j\in [n_2]\}.$ Consider the map $\psi'_i: X_0\to U_i$, defined by $\psi'_i(w_0^j)=x_i^j$ and $\psi'_i(w_3^j)=y_i^{j+1}$, with indices taken modulo $n_2$. Assume that we have already defined $\psi_1,\dots, \psi_{i-1}$.
Then, provided that the necessary conditions hold, Lemma~\ref{blowup} will guarantee
$$\text{an embedding  }\psi_i\text{ of }H\text{ into }G_i:=G[V_1,V_2,U_i](\phi, I_1\setminus I^i)-\bigcup_{k=1}^{i-1} E(\psi_{k}(H)),$$
extending $\psi'_i$, such that $\psi_i(X_q)\subseteq V_q$ for $q\in[2]$ and $\psi_i(H)$ is rainbow. By \eqref{eqspan3}, for each $i\in [n_1]$, the union of $\psi_i(H)$ and all the  $C_i^j- e_i^j$ with $j\in [n_2]$ gives us a rainbow cycle of length at least $(1-3\beta)n$. These cycles are edge-disjoint and altogether provide us with an $\alpha$-decomposition of $G''$ (by \eqref{eqspan2} and since the $\cL_i'$ form a $4\beta$-decomposition of $G''$). Thus, it only remains to verify the conditions of Lemma~\ref{blowup} in order to complete the proof of the theorem. (For this, we will make use of \eqref{eqstar}.)
We apply Lemma~\ref{blowup} with $G_i$, $\delta_1 n$ playing the roles of $G, n$.
\begin{itemize}\item Due to \eqref{prep -1} and the definition of $I^i$ in \textbf{B}, the colouring of $G_i$ is $3\delta_1^2n+\delta n\le \delta_2 \delta_1n$-bounded.
\item\ref{lem blowup 1} is clear from the definition of $H$, with $\Delta=2$.
\item\ref{lem blowup 2} is satisfied since $|X_0|=|U_i|\le 2n/s\le \delta_2\delta_1 n$.
\item \ref{lem blowup 3} is satisfied in the form needed to apply the ``moreover part'' of Lemma~\ref{blowup} since $|X_j|=\frac 12 |U_i|<\delta_2 |V_j|$ for $j\in[2]$ and $$|V_j|\overset{\ref{prep 1}}{=}(1\pm \zeta_1)|V_j'|\overset{\eqref{eqstar}}{=}(1\pm\zeta_1)(1\pm\zeta_1)\delta_1n=(1\pm\delta_2)\delta_1 n.$$
\item Due to $e(H)\le 3n/s\le \delta n$ and property \textbf B, the graph $\bigcup_{k=1}^{i-1}\psi_{k}(H)\cup G(\phi, I^i)$ contains at most $2\delta n^2$
    edges.
    Assertion \ref{prep 2} together with Proposition~\ref{prop: edge deletion regular} imply that $G_i[V_1,V_2]$ is $(\delta_2, \gamma d)$-regular  (note here that $\delta_2\ll \gamma$). Therefore, \ref{lem blowup 4} is satisfied with the prefix ``super'' omitted, which is sufficient to apply the ``moreover part'' of Lemma~\ref{blowup}.
\item Finally, \ref{lem blowup 5} is satisfied due to \ref{prep 3}, combined with \textbf{A} and \textbf{C}.
\end{itemize}

\section{Spanning structures}\label{sec6}

\subsection{Proof of Theorem~\ref{thm: perfect decomp}. Almost-decomposition into $F$-factors}

Let us first give a sketch of the proof.

\begin{itemize}
  \item We split the vertex set $V$ of $G$ into $b=O(\log n)$ equal size parts $U_i$, and, using Lemma~\ref{qr to sr}, we have that the pairs of parts induce superregular pairs. We ignore the edges inside each $U_i$ since the number of these is negligible.
  \item Using the result on resolvable designs (Theorem~\ref{cl: r-factor}), we split the collection of $U_i$ into groups of $f$ parts, such that each pair of parts belongs to exactly one group, and the set of all groups has a partition into layers with each layer covering all $U_i$ exactly once. In other words, we consider a decomposition of the complete graph $K_b$ into $K_f$-factors. Next, we aim to translate this into an almost-decomposition of $G$. Each $K_f$ from the decomposition will correspond to an $f$-partite graph $G_i^j\subseteq G$ between the corresponding parts.
  \item  We randomly split the colours as follows. We set aside a small proportion of colours $I_{q+1}$, and the remaining ones we split into $q:=b/f$ groups of roughly equal size. By Lemma~\ref{lem: colour partition}, in each of the colour groups and for each of the $G_i^j$ the count of rainbow copies of $F$ is ``correct''. Thus, we can apply Lemma~\ref{lem: random F decomp} and obtain an approximate decomposition into rainbow almost-spanning $F$-factors of each of $G_i^j$ in each of the colour groups.
   \item Next we combine the rainbow almost-spanning $F$-factors in all the $G_i^j$ into rainbow almost-spanning $F$-factors in $G$.
   \item Finally, we transform each such almost-spanning $F$-factor $\cD$ into an $F$-factor as follows. Let $V_2$ be the set of vertices not covered by $\cD$. Since we have little control over $V_2$, we also consider a set of vertices $V_1$ (depending on $\cD$) which is the union of several $V(G_i^j)$, where these $i,j$ are chosen in a way that, over all $F$-factors, each $V(G_i^j)$ is used roughly the same number of times by the sets $V_1$. Moreover, we will have $|V_2| \ll |V_1| \ll n$. We discard $\cD[V_1]$ and then apply the rainbow blow-up lemma to obtain a rainbow spanning factor on $V_1\cup V_2$ in colours from $I_{q+1}$. Combined with $\cD[V\setminus(V_1\cup V_2)]$,  this gives a rainbow  $F$-factor in $G$. Altogether, these $F$-factors form the desired approximate decomposition.

\end{itemize}

The main challenge in the final step is to carry this out in such a way that the conditions of the rainbow blow-up lemma~(Theorem~\ref{blowup}) are satisfied in each successive application. In particular, we need to show that the colouring is well-bounded and that in each iteration, the vertex degrees are not affected too much.
This is the main reason why we need the almost-spanning factors to be distributed randomly in the assertion of Lemma~\ref{lem: random F decomp}. This guarantees that, when we combine almost-spanning  factors from different $G_i^j$, the vertices that are left out will ``behave nicely'' with respect to each colour in $I_{q+1}$, in particular, the edges of each colour from $I_{q+1}$ will appear roughly the correct number of times. The degrees of the vertices are not affected too much since none is used too many times in some $V_1$ and since the random choice of the approximate decompositions of $G_i^j$ does not allow a vertex to appear too many times in some $V_2$.

Let us make this precise. If $h=0$, then there is nothing to prove, so we assume $h\geq 1$.
If $f\leq 2$ (and thus $F$ is an edge), then we replace $F$ with two disjoint edges, so we may assume that $f\geq 3$. We remark that this does not change the value of $a$ from the statement. However, this affects the divisibility conditions  (if $n$ is divisible by $2$, but not by $4$), but this problem is easy to fix, and we will come back to it when applying the rainbow blow-up lemma.

We choose auxiliary constants according to the hierarchy \begin{small}\begin{equation}\label{eqhier} 0<1/n_0\ll\eta\ll \zeta\ll \zeta_1\ll\epsilon\ll \delta\ll\delta_1\ll\delta_2\ll \gamma\ll\alpha, d_0 \ \ \ \text{and} \ \ \ \delta_2\ll 1/f, 1/h. \end{equation}\end{small}

Let $G'$ be a graph with an $m$-colouring $\phi$ as in the formulation of Theorem~\ref{thm: perfect decomp}. Let us note that for any $c\in[m]$
\begin{equation}\label{colorbound} e(G'(\phi,c))\le \frac f{h} n.
\end{equation}
Without loss of generality, assume that the number $b':=\eta^{-1/3a}\log n$ is an integer, and define integers
$$b:=f(f-1)b'+f,\ \ \ \ \ g:=(b-1)/(f-1),\ \ \ \ \ q:=b/f.$$
Apply Lemma~\ref{qr to sr} to $G'$ to obtain a $(\zeta,d)$-quasirandom spanning subgraph $G$ of $G'$ such that a random partition $\cU:=(U_1,\ldots, U_b)$ of $V(G')$ chosen with probability distribution $(1/b,\ldots, 1/b)$ satisfies the following with probability at least 0.9.

 \begin{enumerate}[label=\text{{\rm (U\arabic*)}}]
\item \label{U1} For each $i\in [b]$, we have $|U_i|= (1\pm \zeta)n/b$.
\item \label{U2} For all $i\neq j \in [b]$, the bipartite graph $G[U_{i},U_{j}]$ is $(\zeta,d)$-superregular.
\item \label{U3} For all $vw\in E(G)$ and $i\in [b]$, we have $|C_G^{\phi}(v,w)\cap U_i|\le \zeta |U_i|$ and
$d_{G,U_{i}}(v,w) = (d^2 \pm \zeta)|U_{i}|$.
\end{enumerate}
A Chernoff estimate also shows that the following holds with probability at least $0.9$.
\COMMENT{
 Colour degree $X$ satisfies $\mathbb{E}[X]\leq \eta n/(b \log^{2a}n) := s$. $\mathbb{P}[X\geq (1+2\zeta)s] \leq 2e^{- (\zeta s)^2/(2\cdot 4s/3)}.$}
\begin{enumerate}[label=\text{{\rm (U\arabic*)}}]
  \setcounter{enumi}{3}
\item\label{U4} For each $i,j\in[b]$ the colouring $\phi$ of $G[U_i,U_j]$ is locally $(1+2\zeta)\eta \frac{n/b}{\log^{2a} n}$-bounded.\COMMENT{We take a union bound over at most $n^4$ events, where events are ``the intersection of a big set and a randomly chosen set are roughly of correct size''.}
\end{enumerate}

Next apply Lemma~\ref{lem: graph partition} to $G(\phi,c)$ with a random partition $\cU$ as above and with $\eta^{1/8}$ playing the role of $\zeta$ to obtain that
\begin{equation}\label{eqcolor} e(G[U_i,U_j](\phi,c))=\frac 2{b^2}e(G(\phi,c))\pm \frac{\zeta}{b^2}n\end{equation}
holds with probability at least $0.9$ for every $c\in [m]$ and $i,j\in [b]$.\COMMENT{The probability that \eqref{eqcolor} is satisfied for all colours and all $i,j$ is at least $$1-2n^2(\log{n})^2e^{-\frac{\eta^{3/8}n/b}{\eta n/\log^2 n}}\ge 1-n^3 e^{-\eta^{-1/8}\log n}=1-o(1).$$
(We can take $\ell = \eta n \log^{-2}n$, $p_i=1/b$ and $\zeta=\eta^{1/8}$.)
}
 Fix one such partition $\cU$ satisfying \ref{U1}--\ref{U4} as well as \eqref{eqcolor}.

Due to the choice of $b$, we can apply Theorem~\ref{cl: r-factor} with $f,b'$ playing the roles of $r,b'$ and $\rho=1$. Let $\cL_1,\ldots, \cL_{g}$ be the perfect $f$-matchings on $[b]$ thus obtained, and, for each $i\in [g]$, write $\cL_i=\{L_i^j:j\in[q]\}$.

For each $f$-tuple $L_i^j=:\{i'_1,\ldots, i'_f\}$ with $i\in [g]$ and $j\in [q]$, let $G_i^j:= G[U_{i'_1},\ldots, U_{i'_f}]$.  Next we apply Lemma~\ref{counting partite} with $G_i^j$ playing the role of $G$ (note that the assertions \ref{U1}, \ref{U2}, \ref{U3} immediately imply the conditions \ref{lem counting partite 2}, \ref{lem counting partite 1},  \ref{lem counting partite 3}, respectively, and \eqref{eqcolor} guarantees the required boundedness of the colouring, while \ref{U4} implies the local boundedness of the colouring).  We apply Lemma~\ref{counting partite}  to $F$ with the ``trivial'' partition of $V(F)$ into parts of size $1$ and to $G_i^j$ with its natural partition (these are the only partitions we use in what follows, so, by abuse of notation, we will not specify them in the notation). We obtain that for each $v\in V(G_i^j)$ and $uw\in E(G_i^j)$
\begin{eqnarray}\label{corr count 1} r_{G_i^j}(F,v) &\ge & \frac 12d^h (n/b)^{f-1}\enspace \ \ \ \ \ \ \ \ \ \text{ and } \\
\label{corr count 2} \frac{r_{G_i^j}(F,v)}{r_{G_i^j}(F,uw)} &= & (1\pm 2\eps)\frac{ {f\choose 2}dn}{bh}=(1\pm 3\eps)\frac{f|E(G_i^j)|}{h|V(G_i^j)|}.
\end{eqnarray}
The final equality holds since  \ref{U1} and \ref{U2} imply that the average degree in $G_i^j$ is $(1\pm 3\zeta)\frac{dn(f-1)}{b}$.

Consider a random partition $\cI:=(I_1,\ldots, I_{q+1})$ of colours chosen with  probability distribution $(p_1,\ldots, p_{q+1}):=(\frac {1-\gamma}q,\ldots, \frac{1-\gamma}q, \gamma)$. Note that the number of colour classes, excluding the last one, is equal to the number of $f$-tuples in each $\cL_i$. For all $i\in[g], j\in[q]$, apply Lemma~\ref{lem: colour partition} to $G_i^j$ with $\emptyset$, $\eps^2, |V(G_i^j)|,E(G_i^j)$ playing the roles of $U,\eps,n,\cF$. Together with \ref{U4} this implies that with probability $1-o(1)$%
\COMMENT{By \ref{U4} we can take $\ell = 2\eta \frac{n/b}{\log^{2a}n}$.
Recall $q^{-2a-1} \sim \eta^{\frac{2a-1}{3a}} / (\log n)^{2a-1}$.
Then \eqref{corr count 6} holds with probability $\ge 1-2q^2ge^{-\frac{\eps^9n/q}{2\eta n/(b\log ^{2a}n) q^{2a-1}}}\ge 1-2q^2ge^{-\eta^{-1/3}\log n}.$    } for all $j,r\in[q]$, $i\in [g]$ we have
\begin{equation}\label{corr count 6} |E(G_i^j(\phi, I_r))|=(1\pm \eps/2)p_r|E(G_i^j)|.
\end{equation}
Next, for each $i\in[g]$, $j\in[q]$ and
$v\in V(G_i^j)$, we apply  Lemma~\ref{lem: colour partition} to $G_i^j$ with $\{v\},\eps^2,\cF$ playing the roles of $U,\eps,R_{G_i^j}(F,v)$. Next, for each $uw\in E(G_i^j)$ we apply the lemma with $\{u,w\},\eps^2,\{F'-uw:F'\in R_{G_i^j}(F,uw)\}$ playing the roles of $U,\eps,\cF$. Finally, for any $u'w_1,u'w_2\in E(G_i^j)$ we apply the lemma with $\{u',w_1,w_2\},\eps,\{F'-u'w_1-u'w_2-w_1w_2:F'\in R_{G_i^j}(F,u'w_1\cup u'w_2)\}$ playing the roles of $U,\eps,\cF$.
 We claim that with probability $1-n^{-1}$ for all $i\in [g],\ j\in[q]$, $r\in [q]$, all vertices
$v\in V(G_i^j)$ and edges $uw,u'w_1,u'w_2\in E(G_i^j(\phi,I_r))$ we have
\begin{eqnarray}\label{corr count 3} r_{G_i^j(\phi,I_r)}(F,v) &\ge& \frac 14d^hp_{r}^h (n/b)^{f-1}, \\
\label{corr count 4} \frac{r_{G_i^j(\phi,I_r)}(F,v)}{r_{G_i^j(\phi,I_r)}(F,uw)} &=& (1\pm4\eps)\frac{p_rf|E(G_i^j)|}{h|V(G_i^j)\big|}\overset{\eqref{corr count 6}}{=}(1\pm 5\eps)\frac{f|E(G_i^j(\phi, I_r)|}{h|V(G_i^j)|},\\
\label{corr count 7} r_{G_i^j(\phi,I_r)}(F,u'w_1\cup u'w_2) &\le& 2f!p_{r}^{h-3} (n/q)^{f-3}.
\end{eqnarray}
Indeed, to see \eqref{corr count 3} and \eqref{corr count 4}, note that in \eqref{corr count 3} we combined \eqref{corr count 1} with the conclusion of Lemma~\ref{lem: colour partition}, while in \eqref{corr count 4} we combined \eqref{corr count 2} and the conclusions of Lemma~\ref{lem: colour partition} (obtained from fixing $v$ and then $uw$).  To see \eqref{corr count 7}, we first use the trivial bound $r_{G_i^j}(F,u'w_1\cup u'w_2) \le \frac 32 f!(n/q)^{f-3}$ and then apply Lemma~\ref{lem: colour partition}. Let us check that a union bound allows us to arrive to the desired conclusion. First note that the maximum number of edges incident to $U$ in the applications of Lemma~\ref{lem: colour partition} is bounded by $a$.\COMMENT{ Indeed, we have $U$ of three types: either it is a vertex of $F$ or it is a two-element independent set, formed after removing an edge of $F$, or, finally, it is a three-element set, formed after removing a path of length two. (or a triangle $uvw$, but then the number of edges
incident is $d(u)+d(v)+d(w)-6 \le a''(F)$)}
Using \ref{U4}, the probability that \eqref{corr count 3}--\eqref{corr count 7} hold for fixed $i,j,r,v, u'w_1,u'w_2$ and $uw$ is at least
$$
1-2\exp\Big(-\frac{\eps^9p_r^{2a-1}\frac{n}{q}}{2\eta \frac nb\log^{-2a}n}\Big)\ge 1-\exp\big(-\eta^{-1/3}\log n\big)\ge 1-n^{-10}.
$$
Thus, taking a union bound over all possible choices of $i,j,r$ and $v$ as well as $uv,u'w_1,u'w_2$, we conclude that \eqref{corr count 3}--\eqref{corr count 7} hold for all such choices simultaneously with probability at least $1-n^{-1}$.

Moreover, adapting the proof of \ref{prep 2} to our setting, we have\COMMENT{Could get $\epsilon$ instead of $\epsilon^{1/6}$ but $\epsilon^{1/6}$ makes it more similar to \ref{prep 2}}
\begin{equation}\label{corr count 5} G[U_i,U_j](\phi, I_{q+1})\ \ \ \ \text{ is }(\eps^{1/6}, \gamma d)\text{-superregular for any }i\ne j\in [b]
\end{equation}
with probability $1-n^{-1}$.\COMMENT{ By \ref{U2}, $G[U_i,U_j]$ is $(\zeta,d)$-superregular.  Applying Lemma~\ref{lem: colour partition} to $G[U_i,U_j]$ for each $v\in U_i\cup U_j$ and $\cF$ being the set of edges incident to $v$ in $G[U_i,U_j]$, it is clear that all vertices w.h.p. have correct degree. To show that the graph is actually $(\eps,\gamma d)$-regular, we show that most vertices in each pair have correct codegree and then apply Theorem~\ref{thm: almost quasirandom}. Applying Lemma~\ref{lem: colour irregular degree} to $G$ with $\ell=\eta n/\log^2n$ (this is the maximum value of $\ell$ we can have as the upper bound on the local-boundedness), and $|U_i\cup U_j|$ playing the role of $n$ we conclude that each vertex in $U_i$ has at most $\eta^{1/2} n^{1/2}|U_i\cup U_j|^{1/2}/\log n\le \eta |U_i|$ other vertices in $U_i$ with which it has monochromatic codegree bigger than $\eta|U_i|$. Ignore all pairs that have big monochromatic codegree (they form the set $EP_2$ in terms of Claim~\ref{cla1}, which satisfies $|EP_2|\le \eps^2|U_i|^2$).
We also ignore the set $EP_1$ of pairs of codegree not close to $d^2|U_i|$.
As for the others, their codegree in $U_j$ is largely formed by the rainbow paths $P_2$ ending in $U_j$ (all but a $\eta^{1/3}$-fraction of the codegree). Apply  Lemma~\ref{lem: colour partition} with $\cF$ being rainbow paths of length $2$ between a given such pair $u,v$ of vertices  and ending in $U_j$ and conclude that with high probability their ``rainbow'' codegree of $u,v$ in $G[U_i,U_j](\phi, I_{q+1})$ is roughly $\gamma^2$ smaller than their ``rainbow'' codegree in $G[U_i,U_j]$. Adding back the monochromatic codegree only slightly affects the error term since $\eta^{1/3}\ll \gamma^2$. Thus, by Theorem~\ref{thm: almost quasirandom} the graph is $(\eps^{1/6},\gamma d)$-regular with (very) high probability.} From now on, we fix a colour partition $\cI=(I_1,\ldots,I_{q+1})$ which satisfies \eqref{corr count 6}--\eqref{corr count 5} simultaneously.

For each $G_i^j(\phi,I_r)$ with $r\in[q]$, we aim to apply Lemma~\ref{lem: random F decomp} to the family $\cF:=R_{G_i^j(\phi,I_r)}(F)$ with $5\eps,\delta/2$ playing the role of $\eps, \delta$ respectively. Condition \ref{F decomp 3} is satisfied due to \eqref{corr count 4}, and \ref{F decomp 4} and \ref{F decomp 5} are satisfied due to \eqref{corr count 3}, \eqref{corr count 4} and the fact that $r_{G_i^j(\phi,I_r)}(F,uw\cup u'w')\le h^2n^{f-3}$.\COMMENT{
$$(1\pm 5\eps)\frac{|\cF(v)|}{|\cF(uw)|}\stackrel{\eqref{corr count 4}}{=} \frac{f|E|}{h|V|},$$
thus we have \ref{F decomp 3}.
Note that $f\geq 3$ and $| E(G^j_i(\phi,I_r)) | \leq (n/q)^2 $ and $|V(G_i^j)| \geq n/q$.
Hence for any $uw\in E$ we have
$$|\cF(uw)|\stackrel{\eqref{corr count 3},\eqref{corr count 4}}{\geq}
\frac{ d^hp_r^h (n/b)^{f-1}}{ 5\frac{f| E(G^j_i(\phi,I_r)) |}{ h|V(G_i^j)|} }
\geq \frac{ d^hp_r^h (n/b)^{f-1}}{ \frac{ 5f (n/q)^2}{ h n/q} } \geq  d^h p_r^{h} (n/b)^{f-2}/(5f) \geq \frac{n^{f-2}}{(\log{n})^{h+f}} \geq 10 (5\epsilon)^{-1} \log{n},$$
thus we have \ref{F decomp 4}. Since $\cF(uw,u'w')\le h^2n^{f-3}$, this also implies \eqref{F decomp 5}.
}
Due to \eqref{eqcolor} and the boundedness of the colouring, for each colour $c\in I_r$ we have
$$|E(G_i^j(\phi,c))|\le \frac{f(f-1)(1-\alpha)\frac{fdn}{2h} \pm\zeta^{1/2} n}{b^2}\le \frac{(1-2\gamma){f\choose 2}fdn}{b^2h}\le \frac{(1-\gamma)p_r{f\choose 2}dn}{bh}\le \frac{f|E(G_i^j(\phi,I_r))|}{h|V(G_i^j)|},$$
where the final inequality follows from the second equality in \eqref{corr count 2} and \eqref{corr count 6}. Note that this is the only place where we make full use of the (global) boundedness condition on the colouring.
Thus, for any $v\in V(G_i^j)$ and $c\in I_r$, the number of rainbow copies of $F$ in $G_i^j(\phi,I_r)$ containing  an edge of colour $c$ is at most $$|E(G_i^j(\phi,c))|\cdot \max_{uw\in E(G_i^j)}\big\{r_{G_i^j(\phi,I_r)}(F,uw)\big\}\overset{\eqref{corr count 4}}{\le} \frac1{1- 5\eps} r_{G_i^j(\phi,I_r)}(F,v),$$ and condition \ref{F decomp 2} is satisfied. Finally, the verification of the codegree assumptions \ref{F decomp 1} uses \eqref{corr count 7} and can be done as in the proof of Theorem~\ref{thm: approx decomp}. We present only the calculation for the codegree of two colours $c,c'\in[m]$. Recall that due to \ref{U4} the colouring of $G_i^j$ is locally $\ell$-bounded with $\ell:=\eta^{1/4}p_r^3n$. Then,  for $c\neq c'\in [m]$ and $w\in V(G_i^j)$, we have
\begin{small}\begin{align*}
\hspace{-1cm} |\{ \bar F\in R_{G_i^j(\phi,I_r)}(F):  \{c,c'\} \subseteq \phi(E(\Bar{F}))\}| &\overset{\eqref{corr count 7}}{\leq}
\sum_{uv\in E(G_i^j(\phi,c))} \big(d_{G_i^j(\phi,c')}(u)+d_{G_i^j(\phi,c')}(v)\big) 2f! p_r^{h-3}\Big(\frac nq\Big)^{f-3} \nonumber\\
+ \sum_{uv \in E(G_i^j(\phi,c'))} \sum_{ u'v' \in E(G_i^j(\phi,c)-\{u,v\})} 2f! \Big(\frac nq\Big)^{f-4}
&\overset{\eqref{eqcolor}}{\leq} 2f^3\frac n{b^2} \cdot 2\ell \cdot 2f! p_r^{h-3}\Big(\frac nq\Big)^{f-3} + 2(fn)^2\cdot f! \Big(\frac nq\Big)^{f-4} \nonumber\\
&\leq \eta^{1/5} p_r^h \Big(\frac nb\Big)^{f-1}  \overset{\eqref{corr count 3}}{<}
\eta^{1/6}|R_{G_i^j(\phi,I_r)}(F,w)| .
\end{align*}
\end{small}
The other calculations can be done similarly.%
\COMMENT{Note that we have $n/q$  instead of $n$ in Theorem~\ref{thm: approx decomp} (more precisely, we have $f$ parts of size $n/b$ each). The coloring is locally $\ell$-bounded, where $\ell \le \eta^{1/4} p_r^3 n$, $\ell\le \eta^{1/4}p_r^2n/b$. (see \ref{U4}).\\
For each $u \neq v \in V$, there are at most $f! n^{f-2}$ copies of $F$ containing both $u$ and $v$, so we have
\begin{align*}
|R_{G_i^j}(F,u)\cap R_{G_i^j}(F,v)| \leq f! n^{f-2}\overset{\eqref{corr count 3}}{<}\eta |R_{G_i^j(\phi,I_r)}(F,w)|.
\end{align*}
Note that by \eqref{eqcolor} and \eqref{colorbound} we have $e(G_i^j(\phi,c))\le 2f^3n/b^2$ for any $c\in[m]$. Using the conclusions of Lemma~\ref{lem: colour partition} for any $uw\in E(G_i^j))$ and with $\cF:=\{F'-uw: F'\in R_{G_i^j}(F,uw)\}$ playing the role of $\cF$, we have $R_{G_i^j(\phi, I_r)}(F,uw)=(1\pm \eps)p_r^{h-1}R_{G_i^j}(F,uw)\le 2f!p^{h-1}_r (n/q)^{f-2},$ where the first equality holds for all $i\in [g]$, $j\in [q]$ and $uv\in E(G_i^j)$. Note we have obtained it for \eqref{corr count 4}, but it is not explicitly stated there (so formally, we would need to at this as
 \eqref{corr count 4}(b)).
Thus, one can obtain that for $c\in [m]$ and $v\in V(G_i^j)$ one has \begin{align*}
\hspace{-1cm} &|\{ \Bar{F}\in R_{G_i^j(\phi, I_r)}(F):  v\in \Bar{F},c \in \phi(E(\Bar{F}))\}| \leq
\sum_{u\in N_{G_i^j(\phi,c)}(v)} 2f! p_r^{h-1}(n/q)^{f-2} \nonumber\\
&+ \sum_{uw \in E(G_i^j(\phi,c))} f! (n/q)^{f-3}
\leq 2\ell f! p_r^{h-1}(n/q)^{f-2} + fn\cdot f! (n/q)^{f-3} \leq \eta^{1/5} p_r^h n^{f-1}/q^{f-1}.
\end{align*}
 Finally, note that $\eta^{1/5} p_r^h n^{f-1}/q^{f-1}\le \eta^{1/6}|R_{G_i^j(\phi,I_r)}(F,w)|$ due to \eqref{corr count 3}.}

Thus, we conclude that, for all $i\in [g],$ $j,r\in[q]$, there is a randomized algorithm  which returns a $\delta$-decomposition of $G_i^j(\phi,I_r)$ into rainbow $\delta/2$-spanning $F$-factors, such that each $v\in V(G_i^j)$ belongs to each factor with probability at least $1-\delta/2$. By \ref{U2} and \eqref{corr count 6}  we may assume that the number of $\delta/2$-spanning $F$-factors in the $\delta$-decomposition of $G_i^j(\phi,I_r)$ is the same for all $i,j,r$.
We denote this number by $n_\delta$.
For each $i\in[g],$ $ j,r\in[q]$, delete a randomly chosen collection of $\frac{\delta|V(G_i^j)|}{3f}$ copies of $F$ from each of the $\delta/2$-spanning $F$-factors
of $G_i^j(\phi,I_r)$.
Then the proportion of vertices of $V(G_i^j)$ covered by each factor is at least
 $1-\delta$ and at most $1-\delta/3$, and  each $v\in V(G_i^j)$ belongs to each factor with probability at least $1-\delta$.
 Moreover, the factors clearly form a $2\delta$-decomposition of $G_i^j(\phi,I_r)$.
 For $k'\in [n_{\delta}]$, $j\in [q]$ and $i\in[g]$, let $\cD_i^j(k', I_r)$ denote the resulting $k'$-th $\delta$-spanning $F$-factor in this $2\delta$-decomposition of $G_i^j(\phi,I_r)$. Note that the total number of edges in the $\cD_i^j(k',I_r)$ over all $i\in[g], j,r\in [q]$ and $k'\in [n_{\delta}]$ is at least $$\sum_{i\in [g]}\sum_{j,r\in[q]} n_{\delta}\cdot (1-\delta)\frac hf|V(G_i^j)|=(1-\delta)\frac hfn \sum_{i\in [g]}\sum_{r\in[q]} n_{\delta}= (1-\delta)\frac hfn \cdot gqn_\delta,$$
 but, on the other hand, is at most ${n\choose 2}$, and therefore
 \begin{equation}\label{ndelta} gqn_{\delta} \leq \frac fh n.\end{equation}
 Summarizing, these almost-spanning $F$-factors satisfy the following properties.
\begin{itemize}
  \item[\textbf{a)}] For all $i\in[g],\ j,r\in[q]$, $k'\in [n_{\delta}]$ and $v\in V(G_i^j)$, we have $v\in \cD_i^j(k', I_r)$ with probability at least $1-\delta$.
  \item[\textbf{b)}] For all $i\in [g]$ and $j_1,j_2,r_1,r_2\in[q]$ with $(j_1,r_1)\ne (j_2,r_2),$ the random variables $\cD_i^{j_1}(k', I_{r_1})$ and $\cD_i^{j_2}(k', I_{r_2})$ are independent.
\end{itemize}
 For all $i \in [g]$, $r\in[q]$, $k'\in[n_{\delta}]$ put
 \begin{equation}\label{eqfullspan}\cD_i(k'+(r-1)n_{\delta}):=\bigcup_{j=1}^q \cD_i^j(k', I_{r+j}),\end{equation} where the index $r+j$ is modulo $q$. It is easy to see that for each $i\in[g]$ and $k\in[qn_{\delta}]$ the family $\cD_i(k)$ is a rainbow $\delta$-spanning $F$-factor in $G$, and that $\bigcup_{k=1}^{qn_\delta} \cD_i(k) = \bigcup_{k'=1}^{n_{\delta}}\bigcup_{j,r=1}^q \cD_i^j(k',I_r)$.
Moreover, the $\cD_i(k)$ are pairwise edge-disjoint.
Denote by $V'(\cD_i(k))$ the set of vertices not covered by $\cD_i(k)$. We have
\begin{equation}\label{bigremainder} \frac {\delta}3 n\le |V'(\cD_i(k))|\le \delta n.
\end{equation}
We claim that the following properties hold with high probability.
\begin{enumerate}
\item[\textbf{A}]\label{random alg 1} For any $v\in V(G)$ we have $v\in V'(\cD_i(k))$ for at most $2f\delta n$ choices of $(k,i)\in[qn_{\delta}] \times [g]$.
\item[\textbf{B}]\label{random alg 2} For any $c\in I_{q+1}$, $i\in [g]$ and $k\in [qn_{\delta}]$ the number of edges of colour $c$ incident to $V'(\cD_i(k))$ is at most $3f\delta n$.
\end{enumerate}
 By Lemma~\ref{Chernoff} and properties \textbf{a)}, \textbf{b)}, for any $i\in[g]$, $j\in [q]$, $v\in V(G_i^j)$ and $k'\in [n_{\delta}]$, the probability that there are at least $2\delta q$ indices  $r\in[q]$ such that $v\notin \cD_i^j(k',I_r)$ is at most $2e^{-\delta q/3}\le n^{-4}$. Now a union bound shows that with probability at least $1-o(1)$ for all $i\in[g]$, $j\in [q]$, $v\in V(G_i^j)$ and $k'\in [n_{\delta}]$ there are at most $2\delta q$ indices $r\in [q]$ such that $v\notin \cD_i^j(k',I_r)$.
 Thus, using \eqref{ndelta}, with probability $1-o(1)$ every vertex $v\in V(G)$ belongs to all but at most $2\delta q g n_{\delta}\le 2f\delta n$ of the $\delta$-spanning factors $\cD_i(k)$. Consequently, \textbf{A} holds with high probability.

Fix $c\in I_{q+1}$, $k'\in[n_{\delta}],$ $i\in [g]$ and $r\in [q]$ and put $k:=k'+(r-1)n_{\delta}$. Define the martingale $X_0,\ldots, X_q$, where $X_{j'}$ is equal to the expected number of edges of colour $c$ incident to $V'(\cD_i(k))$, given the choices of the almost-spanning factors $\cD_i^1(k',I_{r+1})$, \ldots, $\cD_i^{j'}(k',I_{r+j'})$, with indices taken modulo $q$  (cf. \eqref{eqfullspan}).  We have $X_{0}<2f\delta n$ due to \textbf{a)} and \eqref{colorbound}.
Moreover, $|X_{j'}-X_{j'-1}|\le 4f^2n/b$ due to \eqref{colorbound}, \eqref{eqcolor} and \textbf{b)}.\COMMENT{Properties \eqref{colorbound} and \eqref{eqcolor} imply that the number of edges of colour $c$ incident to $V(G_i^j)$ is at most $fn\cdot 4/b^2\cdot bf=4f^2n/b$. Property \textbf{b)} is needed because we do not want to affect the future decisions (and thus potentially change the value of $X_i$ because of that).} Thus, using Theorem~\ref{Azuma}, we obtain $\mathbb P\big[X_q\ge 3f\delta n\big]\le n^{-5}$,\COMMENT{
The probability is at most the probability that $P\big[|X_q-X_0|\ge f\delta n]\le  e^{-\frac{f^2\delta^2n^2}{2\sum_{j'=1}^q (4f^2n/b)^2}}= e^{-c_f\delta^2b}\le e^{-\eta^{-1/(4a)}\log n}<n^{-5}$, where $c_f$ depends only on $f$.
}
 and with probability at least $1-n^{-1}$ none of these events for different $i\in [g],k'\in [n_\delta], r\in[q],c\in I_{q+1}$ occurs.
 Hence we can choose the $\cD_i^j(k',I_r)$ such that \textbf A and  \textbf{B} hold.

The remaining part of the proof is concerned with turning $\cD_i(k)$ for all $i\in[g]$, $k\in [qn_{\delta}]$ into a {\it spanning} $F$-factor using  the rainbow blow-up lemma (Lemma~\ref{blowup}). We cannot apply Lemma~\ref{blowup} to $V'(\cD_i(k))$ directly, so we add some random vertices to it as described below. Fix $i\in [g]$ and $k\in [qn_{\delta}]$. Define a random subcollection $\mathcal C_{i,k}$ of $\{G_i^j: j\in[q]\}$ as follows. \begin{itemize}
\item[\textbf{c)}] Include each $Q\in \{G_i^j: j\in[q]\}$ into $\mathcal C_{i,k}$ independently at random  with probability $\delta_1$.
\end{itemize}
Put
$$V_1(i,k):=\bigcup_{Q\in \mathcal C_{i,k}} V(Q), \ \ \ \ \ \ \  V_2(i,k):=V'(\cD_i(k))\setminus V_1(i,k).$$
Recall that each $Q\in \mathcal C_{i,k}$ is $f$-partite and for each $j'\in [f]$ let $V_1^{j'}(i,k)$ be the union over all $Q\in \mathcal C_{i,k}$ of the $j'$-th vertex class of $Q$. In particular, $\bigcup_{j'=1}^f V_1^{j'}(i,k) = V_1(i,k).$ For every $k\in [qn_{\delta}]$ and $i\in[g]$, put
$$W'(i,k):=G[V_1^1(i,k),\ldots, V_1^{f}(i,k),V_2(i,k)](\phi, I_{q+1}).$$

Consider an arbitrary $F$-factor $H_{i,k}$ on $[N_{i,k}]$, where $N_{i,k}:=|V_1(i,k)|+|V_2(i,k)|$. Note that $N_{i,k}$ is divisible by $f$ since $n$ is divisible by $f$ and $\cD_i(k)\setminus V_1(i,k)$ is an $F$-factor on $V(G)\setminus (V_1(i,k)\cup V_2(i,k))$. Recall that in the case when $F$ was an edge, we had to replace it with two disjoint edges. If $n$ (and thus also $N_{i,k}$)  is not divisible by $4$, let $H_{i,k}$ be the union of an $F$-factor on $[N_{i,k}-2]$ and the edge $\{N_{i,k}-1,N_{i,k}\}$. Split $V(H_{i,k})$ arbitrarily into $f+1$ independent sets $S_0,\ldots, S_{f}$, where $|S_{j'}|=|V_1^{j'}(i,k)|$ for each $j'\in [f]$, and where $|S_0|=|V_2(i,k)|$. Moreover, we require that each copy of $F$ in $H_{i,k}$ intersects each of $S_0,\ldots, S_f$ in at most one vertex. Using \ref{U1} and \eqref{bigremainder}, it is easy to see that such a partition always exists provided $N_{i,k}\ge \delta_1 n/2,$ say (which will be satisfied by  $\mathbf{C_i}$ below).\COMMENT{
We use the fact that the number of vertices in each $U_j$ is very tightly concentrated, and thus the difference between different $V_1^{i'}$ (roughly $\zeta n$) is much smaller than $|V_2|$ (roughly $\delta n$, see \eqref{bigremainder}). More precisely, assume that $V_1^1$ is the largest out of $V_1^{i'}$, and put $x_{i'}:=|V_1^1|-|V_1^{i'}|$. Then, due to \ref{U1} and the definition of $V_1^{i'}$, we have $x_{i'}\le 2\zeta n$. Thus, we have $\sum_{i'=1}^f x_{i'}\le |V_2|\sim \delta n$. For each $i'\in [f]$ take $x_{i'}$ copies of $F$, which are not yet assigned to the parts $S_0,\ldots, S_f$, and assign each vertex to a distinct part, excluding part $i'$. Once we are done for all $i'$, we see that the number $N$ of vertices we still need to assign to the parts is the same for all parts $S_1,\ldots, S_f$, moreover, the remaining number $N_0$ of vertices to be assigned to $S_0$ is divisible by $f$. Note $N_0\le N$. Now for each $i'\in [f]$ take $N_0/f$ copies of $F$ and assign one vertex of each of these copies to each part apart from $S_{i'}$. After that $S_0=\emptyset$ and the number of $V_i$ we still need to assign to $S_1,\ldots, S_f$ is the same.
}

Note that both $V_1(i,k)$ and $N_{i,k}$ are  still  random variables at this stage. For all pairs $(i,k)$ in lexicographical order, where $i\in [g]$ and $k\in [qn_{\delta}]$, we proceed iteratively as follows. Fix $i\in [g]$. Assume that we have already fixed a choice of all the $V_1(i^*,k^*)$ for $i^*<i$ and $k^*\in [qn_{\delta}]$ and that we have constructed edge-disjoint embeddings $\psi_{i^*,k^*}: H_{i^*,k^*}\to W'(i^*,k^*)$ for all $i^*<i$, $k^*\in[qn_{\delta}]$, which satisfy $\mathbf {C_1},\ldots, \mathbf{C_{i-1}},\mathbf{D_1},\ldots,\mathbf{D_{i-1}},\mathbf{E_1},\ldots, \mathbf{E_{i-1}}$ below. Define the graph
$$
K_{i}:=\bigcup_{i^*=1}^{i-1}\bigcup_{k^*=1}^{qn_{\delta}}\psi_{i^*,k^*}(H_{i^*,k^*}).
$$
 Next, we choose $V_1(i,k)$ for all $k\in [qn_{\delta}]$  simultaneously. We claim that there exists a choice of these $V_1(i,k)$ such that the following hold.
\begin{itemize}
  \item[$\mathbf {C_i}$] For each $k\in [qn_{\delta}]$ we have $|V_1(i,k)|=(1\pm \delta_1)\delta_1n.$
  \item[$\mathbf {D_i}$] Each $v\in V(G)$ belongs to at most $2\delta_1 qn_{\delta}$ sets among $V_1(i,1),\ldots, V_1(i,qn_{\delta})$.
\item[$\mathbf {E_i}$] For all $v\in V(G)$ and $k\in [qn_{\delta}]$ we have $d_{K_{i},V_1(i,k)}(v)\le \delta_1^{3/2}n$.
\end{itemize}
To prove the claim, note that $\EXP[|V_1(i,k)|]=\delta_1n$. Recall from \ref{U1} that for all $i_1,j_1,i_2,j_2$ we have  $|V(G_{i_1}^{j_1})|=(1\pm 2\zeta)|V(G_{i_2}^{j_2})|.$
Using Lemma~\ref{Chernoff} and \textbf{c)}, it follows that $\mathbf{C_i}$ is satisfied with probability at least $1-e^{-\delta_1^3q/3}\ge 1-n^{-2}$ for fixed $k$.\COMMENT{
it is at least $1-2e^{-\frac{(0.99\delta_1^2 q)^2}{2(\delta_1 q+(0.99\delta_1^2 q)/3)}}$, where we replaced $1$ by $0.99$ to compensate for the fact that $G_i^j$ are not of exactly the same size.
} Taking a union bound over all $k\in [qn_{\delta}]$, we conclude that $\mathbf{C_i}$ is satisfied with probability $1-o(1)$.

Next, using \textbf{c)} and Lemma~\ref{Chernoff}, for any $j\in [q]$ the probability that $G_i^j$ belongs to $\mathcal C_{i,k}$ for at least $2\delta_1 qn_{\delta}$ different values of $k$ is at most $e^{-\delta_1 q n_{\delta}/3}$. Taking a union bound over all $j$, we conclude that  $\mathbf{D_i}$ holds with probability $1-o(1)$.

Finally, using \textbf{A}, as well as $\mathbf{D_{i^*}}$ for $i^*<i$ and the fact that  each $H_{i^*,k^*}$ has maximum degree at most $f$,  we conclude that  $d_{K_i}(v)\le 2f^2\delta n+f(i-1)\cdot 2\delta_1q n_{\delta}\le 4f^2\delta_1 n$. Fix $v\in V(G)$ and $k\in [qn_{\delta}]$. Similarly to the proof of \textbf{B}, define the martingale $X_0,\ldots, X_q$, where $X_t:=\Exp\big[d_{K_{i},V_1(i,k)}(v)\mid \mathcal C_{i,k}\cap \{G_i^1,\ldots, G_i^{t}\}\big]$. We have $X_0\le 4f^2\delta_1^2 n$ and $|X_t-X_{t-1}|\le \vartheta_t$, where $\vartheta_t\le |V(G_i^t)|\le 2n/q$. Thus, $\sum_{t=1}^q \vartheta_t^2\le  \frac{2n^2}q$. Applying Theorem~\ref{Azuma}, we obtain that $\Pro[X_q\ge \delta_1^{3/2} n]\le n^{-4}$ and,\COMMENT{The probability is at most $2e^{\frac{-\delta_1^3n^2}{4n^2/q}}\le 2e^{-\delta_1^3q/4}\le n^{-4}.$} taking a union bound over all choices of $v\in V(G)$ and $k\in [qn_{\delta}]$, we conclude that $\mathbf{E_i}$ holds with probability $1-o(1)$.
Fix choices of $V_1(i,1),\ldots, V_1(i,qn_{\delta})$ that satisfy properties $\mathbf{C_i}$, $\mathbf{D_i}$, $\mathbf{E_i}$ simultaneously.

We remark that $\mathbf{C_i}$ together with \ref{U1} and \eqref{bigremainder} imply that
 \begin{align}\label{eqsize12} |\{j\in[q]:G_i^j\in \mathcal C_{i,k}\}|&=(1\pm 2\delta_1)\delta_1q, \ \ \ \ \ \ |V_1^{j'}(i,j)|=(1\pm 2\delta_1)\delta_1 n/f\ \ \ \ \text{and} \\
 \label{eqsize11}
  \ \ \ \ |V_2(i,k)|&\le \delta_1|V_1^{j'}(i,k)|\ \ \ \ \ \ \ \  \text{ for each } j'\in[f].\end{align}
 For each $k\in[qn_{\delta}]$ in turn, we now intend to apply Lemma~\ref{blowup} using $H_{i,k}$ with partition $S_0,\ldots, S_f$  and an arbitrary bijection $\psi'_{i,k}: S_0\to V_2(i,k)$ and with $\delta_1n/f,f,\gamma d,\delta_2,\gamma$ playing the roles of $n,r,d,\delta_2,\gamma$ to
$$W(i,k):=W'(i,k)-K_{i}-\bigcup_{k^*=1}^{k-1}\psi_{i,k^*}(H_{i,k^*})$$
with partition $V_2(i,k),V_1^1(i,k),\dots, V_1^f(i,k)$. Provided that such an application is possible, we can extend $\psi'_{i,k}$ to $\psi_{i,k}$ and obtain a rainbow $F$-factor $\psi_{i,k}(H_{i,k})$ in $W(i,k)$ for each $i,k$. (Indeed, the graph $\psi_{i,k}(H_{i,k})\cup (\cD_i(k)\setminus V_1(i,k))$ forms a rainbow $F$-factor in $G'$.) Moreover, $\cH:=\bigcup_{i,k}\big(\psi_{i,k}(H_{i,k})\cup (\cD_i(k)\setminus V_1(i,k))\big)$ gives us an $\alpha$-decomposition of $G'$, as required. Indeed, using \ref{U2}, \eqref{corr count 5}, \eqref{eqfullspan} and the fact that $\{\cD_i^j(k',I_r):k'\in[n_{\delta}]\}$ forms a $2\delta$-decomposition of $G_i^j(\phi,I_r)$, it is easy to see that  already $\bigcup_{i,k}\cD_i(k)$ covers all but an $\gamma^{1/2}$-fraction of the edges of $G'$,\COMMENT{
By definition, $\bigcup_{i,k}\cD_i(k)$ covers all but all but a $2\delta$-proportion of the edges in $G[U_1,\ldots, U_b](\phi, [m]\setminus I_{q+1})$ (and in particular it has at least $\frac d3 n^2$ edges by \ref{U2}). On the other hand, $|e(G')-e(G)|\le 2\zeta n^2$ and $e(G)-e(G[U_1,\ldots, U_b])\le 2\zeta n^2$. Finally, \eqref{corr count 5} implies that  $e(G[U_1,\ldots, U_b])-e(G[U_1,\ldots, U_b](\phi,[m]\setminus I_{q+1}))\le 2\gamma dn^2$. Since $\zeta\ll \gamma\ll d$, we conclude that $G[U_1,\ldots, U_q](\phi,[m]\setminus I_{q+1})$ contains all but a $\gamma^{2/3}$-proportion of edges of $G'$, and, combining, we get the claim.
}
and $\cH$ contains at least as many edges (as it consists of spanning rather than almost-spanning factors). Thus, to complete the proof, we only need to
verify that Lemma~\ref{blowup} is applicable in each iteration step. Indeed, we have the following.
\begin{itemize}
  \item Property \textbf B,  combined with \eqref{colorbound}, \eqref{eqcolor} and the first part of \eqref{eqsize12} imply that the colouring $\phi$ of $W(i,k)$ is $3f\delta n + \frac{3fn}{b^2}\cdot (2\delta_1q)^2{f\choose 2}\le \delta_2(\delta_1n/f)$-bounded.
  \item\ref{lem blowup 1} is implied by the definitions of $H_{i,k}$ and $S_0$, with $\Delta=f$.
\item \ref{lem blowup 2} is  implied by the definition of $\psi'_{i,k}$ and  \eqref{eqsize11}.
\item \ref{lem blowup 3} is implied by  \eqref{eqsize12}, together with the definition of $H_{i,k}$.
  \item By \eqref{corr count 5}, for $j_1\ne j_2\in[f]$ the graph $W'(i,k)\big[V_1^{j_1}(i,k),V_1^{j_2}(i,k)\big]$ is $(\delta_1,\gamma d)$-superregular and for each $v\in V_2(i,k)$, $j'\in[f]$ we have $d_{W'(i,k),V_1^{j'}(i,k)}(v)=\gamma d|V_1^{j'}(i,k)|\pm \delta_1|V_1^{j'}(i,k)|.$ Let
      $$K_i^k:=K_i \cup \bigcup_{k^*=1}^{k-1}\psi_{i,k^*}(H_{i,k^*}).$$
  Then, due to $\mathbf{E_i}$, for each vertex $v\in V(G)$ we have $$d_{K_{i}^k,V_1(i,k)}(v)\le d_{K_{i},V_1(i,k)}(v)+fqn_{\delta}\le \delta_1^{1/3}\cdot(\delta_1n/f).$$
  (Note that we bounded the contribution of $\bigcup_{k^*=1}^{k-1}\psi_{i,k^*}(H_{i,k^*})$ by $fqn_{\delta}\le f^2n/g\le \delta n$, using \eqref{ndelta}.)
   Using Proposition~\ref{prop: edge deletion regular}, we conclude that for all $j_1,j_2,j'\in[f]$ and $v\in V_2(i,k)$ $$W(i,k)[V_1^{j_1}(i,k),V_1^{j_2}(i,k)]\text{ is }(\delta_2,\gamma d)\text{-superregular and }\ d_{W(i,k),V_1^{j'}(i,k)}(v)\ge \frac 12\gamma d|V_1^{j'}(i,k)|.$$
   Thus,~\ref{lem blowup 4} and~\ref{lem blowup 5} are satisfied.
 \end{itemize}
 This concludes the proof of Theorem~\ref{thm: perfect decomp}.

\subsection{Proof of Theorem~\ref{thm: spanning cycle}. Almost-decomposition into Hamilton cycles}
The proof of this theorem is very similar to that of Theorem~\ref{thm: perfect decomp}. In particular, we use the same notation as in the proof of Theorem~\ref{thm: perfect decomp}. We will let a cycle $C_s$ with sufficiently large $s$ play the role of $F$. (We remark that $a(C_s)=2$.)  We then merge each almost-spanning $C_s$-factor into a single Hamilton cycle. This introduces a final ``gluing'' step, and, in particular, changes the graphs $H_{i,k}$ and embeddings $\psi'_{i,k}$ we use. This part of the proof resembles the final part of the proof of Theorem~\ref{thm: near spanning cycle}. The main difference to Theorem~\ref{thm: near spanning cycle} is that we have to include all the vertices into the  cycle this time.

Let us make this precise.  We use the following hierarchy of constants: \begin{small}\begin{equation}
0<1/n_0\ll\eta\ll \zeta\ll \zeta_1\ll\epsilon\ll\eps_1\ll \delta\ll \delta_1\ll \delta_2\ll 1/s\ll\delta_3\ll\delta_4\ll \gamma\ll \beta\ll\alpha, d_0.\end{equation}\end{small}
Note that the position of $f,h$ in the hierarchy in the proof of  Theorem~\ref{thm: perfect decomp} is consistent with $f=h=s$ and \eqref{eqhier}. We additionally assume that $s$ is even.

We proceed until the stage just before properties \textbf{A} and \textbf{B}. In particular, for all $i\in [g]$, $k\in[qn_{\delta}]$ we define a $\delta$-spanning $C_s$-factor $\cD_i(k)$ in $G$.  We write $\cD_i(k)=\bigcup_{j=1}^{n_2}C_i^j(k),$ where $C_i^j(k)$ are the $s$-cycles forming $\cD_i(k)$. Moreover, by (randomly) disregarding some cycles if necessary, we assume that the number $n_2$ is the same for all $i,k$.
For every $k\in [qn_{\delta}]$ we define the following sets of vertices:
\begin{align*}
 V'(\cD_i(k)):=& V(G)\setminus V(\cD_i(k))\ \ \ \ \ \ \text{and}\\
V''(\cD_i(k)):=&\bigcup_{j=1}^{n_2}\{x_i^j(k),y_i^j(k)\},\ \  \ \text{ where }x_i^j(k)y_i^j(k)\text{ is an edge, randomly chosen from }C_i^j(k).\end{align*}

We modify the properties \textbf{A} and \textbf{B} accordingly. We claim that the following hold with high probability.
\begin{enumerate}
\item[\textbf{A}]\label{random alg 1} For any $v\in V(G)$ we have $v\in V'(\cD_i(k))\cup V''(\cD_i(k))$ for at most $2s\delta n+4n/s$ choices of $(k,i) \in[qn_{\delta}] \times [g]$.
\item[\textbf{B}]\label{random alg 2} For all $c\in I_{q+1}$, $i\in [g]$ and $k\in [qn_{\delta}]$ the number of edges of colour $c$ incident to $V'(\cD_i(k))\cup V''(\cD_i(k))$ is at most $3s\delta n+4n/s$.
\end{enumerate}
In view of the proof of Theorem~\ref{thm: perfect decomp}, we only have to verify the parts of the properties \textbf A and \textbf B involving $V''(\cD_i(k))$.

Fix $v\in V(G)$. To verify the second part of \textbf{A}, note that for any $i\in[g]$ the number $\rho_v$ of sets among $V''(\cD_i(1)),\ldots, V''(\cD_i(qn_{\delta}))$ to which $v$ belongs is a random variable, which is a sum of $qn_{\delta}$ independent binary random variables with probability of success at most $2/s$. Since
$qn_{\delta}\cdot 2/s\le 2n/gs$ due to \eqref{ndelta}, a standard application of  Lemma~\ref{Chernoff} (together with a union bound over $i \in [g]$) implies that the second part of \textbf{A} holds for all $v\in V(G)$ with probability $1-o(1)$.

In order to ensure that for all $c\in I_{q+1}$, $i\in [g]$, $k\in[qn_{\delta}]$ the number of edges of colour $c$ incident to $V''(\cD_i(k))$ is at most $4n/s$, we define a martingale $X_0,\ldots, X_{n_2}$, where $X_{j}$ is equal to the expected number of edges of color $c\in I_{q+1}$ incident to $V''(\cD_i(k))$ given the choices of random edges in $C_i^1(k),\ldots, C_i^{j}(k)$. By \eqref{colorbound} we have $X_0\le 2n/s$ and, since the coloring $\phi$ is locally $\eta n/\log^4 n$-bounded,  $|X_j-X_{j-1}|\le 2\eta \frac n{\log^4 n}$. Putting $c_i^j(k)$ to be the number of edges of colour $c$ incident to $C_i^j(k)$, we also get that $\sum_{j=1}^{n_2}|X_j-X_{j-1}|\le \sum_{j=1}^{n_2} c_i^j(k)\le 2n$. Thus, Theorem~\ref{Azuma} implies that $\mathbb{P}\big[|X_{n_2}-X_0|\ge 2n/s]\le  n^{-5}.$\COMMENT{$\le 2e^{-\frac{(2n/s)^2}{4\eta \frac n{\log^4n}\cdot 2n}}\le 2e^{-\log^4n} \le$}
Thus, \textbf B holds with high probability.
Fix a choice of the $\cD_i(k)$ that satisfies \textbf{A} and \textbf{B} simultaneously.

Fix $i\in [g]$ and $k\in [qn_{\delta}]$. Define a random subcollection $\mathcal C_{i,k}$ of $\{G_i^j: j\in[q]\}$ as follows. \begin{itemize}
\item[\textbf{c)}] Include each $Q\in \{G_i^j: j\in[q]\}$ into $\mathcal C_{i,k}$ independently  with probability $\delta_3$.
\end{itemize}
(Note that the probability in this case is not the same as in the proof of Theorem~\ref{thm: perfect decomp}.)
Put
$$V_1(i,k):=\bigcup_{Q\in \mathcal C_{i,k}} V(Q), \ \ \ \ \ \ \  V_2(i,k):=V'(\cD_i(k))\setminus V_1(i,k) \ \  \ \text{ and } \ \ \ V_3(i,k):=V''(\cD_i(k))\setminus V_1(i,k).$$

Recall that each $Q\in \mathcal C_{i,k}$, and thus also $V_1(i,k)$, has $s$ parts and that $s$ is even. Let $V_1^1(i,k)$ be the union of all parts with odd indices and let $V_1^2(i,k):=V_1(i,k)\setminus V_1^1(i,k)$. Put $n_2(i,k):=|V_3(i,k)|/2.$

Let $W'(i,k)$ be the following graph:
$$W'(i,k):=G[V_1^1(i,k), V_1^2(i,k),V_2(i,k),V_3(i,k)](\phi, I_{q+1}).$$

Now we are in a position to define the graph $H_{i,k}$ and the embedding function $\psi'_{i,k}$. Let $H_{i,k}:=\bigcup_{j=1}^{n_2(i,k)-1} P^j\cup P_{i,k}$, where $P^j=w_0^jw_1^j w_2^jw_{3}^j$ is a path of length $3$, and $P_{i,k}=w_0^{n_2(i,k)}\ldots w_{3}^{n_2(i,k)}$ is a path on $|V(W'(i,k))|-4(n_2(i,k)-1)$ vertices. We also let  $t:=|V_2(i,k)|$ and choose vertices $z_1,\ldots, z_t$ on $P_{i,k}$, such that the vertices $z_1,\ldots, z_t,w_0^{n_2(i,k)}, w_{3}^{n_2(i,k)}$ have pairwise distance at least four along the path
and for all $i' \in [t-1]$ the distance between $z_i$ and $z_{i+1}$ is precisely four. Clearly, this condition is possible to fulfill since $V_1(i,k)$ is much larger than $V_2(i,k)\cup V_3(i,k)$ (note that $V_1(i,k)$ has size roughly $\delta_3n$, while $V_2(i,k)$ and $V_3(i,k)$ have sizes at most $\delta n$ and $2n/s$, respectively).  Note that $|V(H_{i,k})|=|V(W'(i,k))|$.

 Take a partition of $H_{i,k}$ into three parts $\cX=\{X_0,X_1,X_2\}$, which  satisfies the following: $X_0=\bigcup_{j=1}^{n_2}\{w_0^j,w_{3}^j\}\cup \{z_1,\ldots, z_t\}$; for $j\in [2]$ the set $X_j$ is independent in $H_{i,k}$ and has size $|V_1^j(i,k)|$. The final property is easy to satisfy since by \ref{U1} the sizes of $V_1^1(i,k)$ and $V_1^2(i,k)$ differ by at most $2\zeta n$, which is much smaller than $t$ due to \eqref{bigremainder} and thus we have sufficient flexibility in assigning the vertices of $P_{i,k}$ to $X_1$ and $X_2$ since $z_1,\ldots, z_t$ are assigned to $X_0$. Note that $|X_0|=|V_2(i,k)\cup V_3(i,k)|$. Moreover, by relabeling if necessary, we may assume that $\mathcal D_{i}(k) = \bigcup_{j=1}^{n_2(i,k)} C_i^j(k).$
We aim to apply Lemma~\ref{blowup} with $\psi'_{i,k}$ defined as follows: $\psi'_{i,k}(w_0^j)=x_i^j(k)$ and $\psi'_{i,k}(w_{3}^j)=y_i^{j+1}(k)$, with indices taken modulo $n_2(i,k)$, and $\psi'_{i,k}(z_j)=v_j$, where $V_2(i,k)=:\{v_1,\ldots, v_t\}$.

We slightly modify the properties $\mathbf{C_i}$, $\mathbf{D_i}$, $\mathbf{E_i}$ from the proof of Theorem~\ref{thm: perfect decomp} (due to changing $\delta_1$ to $\delta_3$ in \textbf{c)}):
\begin{itemize}
  \item[$\mathbf{C_i}$] For each $k\in [qn_{\delta}]$ we have $|V_1(i,k)|=(1\pm \delta_3)\delta_3n.$
  \item[$\mathbf{D_i}$] Each $v\in V(G)$ belongs to at most $2\delta_3 qn_{\delta}$ sets among $V_1(i,1),\ldots, V_1(i,qn_{\delta})$.
\item[$\mathbf{E_i}$] For any $v\in V(G)$ and $k\in [qn_{\delta}]$ we have $d_{K_{i},V_1(i,k)}(v)\le \delta_3^{3/2}n$.
\end{itemize}
 Apply Lemma~\ref{blowup} to embed $H_{i,k}$ with partition $X_0,X_1,X_2$ and the bijection $\psi'_{i,k}: X_0\to V_2(i,k)\cup V_3(i,k)$ into $W(i,k)$  with $\delta_3n/2,2,\delta_4,\gamma$ playing the roles of $n,r,\delta_2,\gamma$.  As in the proof of Theorem~\ref{thm: perfect decomp}, $W(i,k)$ is obtained from $W'(i,k)$ by deleting the edges used in previous iterations.

The verification of the conditions of the blow-up lemma repeats the one done in the previous subsection. \COMMENT{Indeed, we have the following.
\begin{itemize}
  \item Inequalities \eqref{colorbound} and \eqref{eqcolor}, combined with $\mathbf {C_i}$ imply that there are at most $n\cdot 3\delta_3^2/4$ edges of colour $c$ in $G[V_1^1(i,k),V_1^2(i,k)]$ (since each of the two parts is roughly a $\delta_3/2$-fraction of $V(G)$). Together with   property \textbf B, it implies that the colouring is $3s\delta n + 4n/s+3n\delta_3^2/4\le \delta_4(\delta_3n/2)$-bounded;
  \item\ref{lem blowup 1} is implied by the definition of  by the definitions of $H_{i,k}$ and $X_0$, with $\Delta=2$;
\item \ref{lem blowup 2} is  implied by the definition of $\psi'_{i,k}$ and the fact that $|V_1(i,k)|=(1\pm \delta_3)\delta_3n$, while $|V_2(i,k)\cup V_3(i,k)|\le \delta n+2n/s\le \delta_3^2n$;
\item \ref{lem blowup 3} is implied by the fact that $|V_1^{j'}(i,k)|=(1\pm 2\delta_3)\delta_3n/2$, together with the definition of $H_{i,k}$;

  \item Due to \eqref{corr count 5}, the graph $W'(i,k)\big[V_1^{1}(i,k),V_1^{2}(i,k)\big]$ is $(\delta_3,\gamma d)$-superregular and for each $v\in V_2(i,k)\cup V_3(i,k)$, $j'\in[2]$ we have $d_{W'(i,k),V_1^{j'}(i,k)}(v)=\gamma d|V_1^{j'}(i,k)|\pm \delta_1|V_1^{j'}(i,k)|.$ Let
      $$K_i^k:=K_i\cup_{k^*=1}^{k-1}\psi_{i,k^*}(H_{i,k^*}).$$
  Then, due to $\mathbf{E_i}$, for each vertex $v\in V(G)$ we have $$d_{K_{i}^k,V_1(i,k)}(v)\le d_{K_{i},V_1(i,k)}(v)+2qn_{\delta}\le \delta_3^{1/3}\cdot(\delta_3n/2).$$
  Note that we bounded the contribution of $\bigcup_{k^*=1}^{k-1}\psi_{i,k^*}(H_{i,k^*})$ in a trivial way by $2qn_{\delta}\le 2n/g\le \delta n$.
   Using Proposition~\ref{prop: edge deletion regular}, we conclude that for all $j'\in[2]$ and $v\in V_2(i,k)$ $$W(i,k)[V_1^{1}(i,k),V_1^{2}(i,k)]\text{ is }(\delta_4,\gamma d)\text{-superregular and }\ d_{W(i,k),V_1^{j'}(i,k)}(v)\ge \frac 12\gamma d|V_1^{j'}(i,k)|.$$
  Thus,~\ref{lem blowup 4} and~\ref{lem blowup 5} are satisfied.
 \end{itemize}
}

\section{Concluding remarks}\label{concl}
In this section we describe possible extensions of our results, along with applications. We also adapt a counterexample to Stein's conjecture due to Pokrovskiy and Sudakov \cite{PS17a} to the setting of Corollary~\ref{cor1}, as mentioned in the introduction.

\subsection{Colourings of $K_n$ with no rainbow cycles longer than $\pmb{ n-\Omega(\log n)}$.}
Pokrovskiy and Sudakov \cite{PS17a} constructed an $n\times n$ array $A$ where the entries are symbols from $[n]$ such that each symbol occurs precisely $n$ times and such that the largest partial transversal of $A$ has size $n-\Omega(\log n)$. Any such array may be interpreted as a colouring of a complete directed graph $G$ on $[n]$ with one loop at each vertex. For any $i,j$, the edge $(i,j)$ of $G$ is coloured with the symbol in the $i$-th row and $j$-th column. In this interpretation, any rainbow directed cycle in $G$ gives raise to a rainbow partial transversal of the same length in $A$.\COMMENT{But the converse does not hold.} Assuming that $A$ (and thus also the colouring of $G$) is {\it symmetric}, we can construct a colouring of $K_n$ by simply assigning  the colour of $(i,j)$ and $(j,i)$ to the edge $ij$ in $K_n$, and vice versa. Thus, any rainbow cycle of length $k$ in the resulting $n/2$-bounded colouring of $K_n$ gives raise to a partial transversal of size $k$ in $A$ (which avoids the diagonal).

As mentioned in \cite{PS17a}, the array from \cite{PS17a} can be easily made symmetric. Moreover, the same argument works for arrays where each symbol appears at most $n-1$ times. This provides us with an example of an $(n-1)/2$-bounded colouring of $K_n$ whose longest rainbow cycle has length $n-\Omega(\log n)$. The array from \cite{PS17a} can be also adjusted so that the resulting colouring of $K_n$ is locally $n^{1/2+\eps}$-bounded for any fixed $\eps>0$, and so that the length of the longest rainbow cycle is still $n-\Omega(\log{n})$, with the constant in the $\Omega$-term depending on $\eps$. (In terms of the array, this means that no row or column contains more than $n^{1/2+\eps}$ copies of the same symbol.) \COMMENT{To do so, one has to use only the central squares in the construction from \cite{PS17a} that have size between $n^{1/2-\eps}\times n^{1/2-\eps}$ and $n^{1/2}\times n^{1/2}$ (by terminating the sequence $x_i$ when $x_i\le n^{1/2-\eps}$), and then distribute the colours in the horizontal and vertical boxes so that each of them lies a ``square-like'' rectangle.}

\subsection{Multipartite versions}\label{sec:multipartite}
Our methods also extend to the multipartite setting. We state the following result without proof, as this is almost identical to that of Theorems~\ref{thm: approx decomp} and~\ref{thm: perfect decomp}. The two main differences are that we apply a result of MacNeish \cite{MN} instead of Theorem~\ref{cl: r-factor} to show that there is a resolvable design in the partite setting. Moreover, in the proof of (i) we apply Lemma~\ref{counting partite} (with $f$ as in Theorem~\ref{thm: decomp bip}) instead of Lemma~\ref{counting quasirandom}. Similarly, to obtain the analogue of \eqref{corr count 1}, \eqref{corr count 2} in the proof of (ii), we apply Lemma~\ref{counting partite} rather than Lemma~\ref{counting quasirandom}.\COMMENT{Note that \eqref{corr count 7} only uses a trivial upper bound, which is also valid in the partite setting.}
\begin{theorem}\label{thm: decomp bip}
For given $\alpha,d_0,f,h,r>0$, there exist $\eta>0$ and $n_0$ such that the following holds for all $n\geq n_0$ such that $f$ divides $n$ and $d\ge d_0$.
Suppose that $F$ is an $fr$-vertex $h$-edge graph with vertex partition $\{X_1,\dots, X_r\}$ into independent sets of size $f$. Suppose that $a(F)\le a$.
Suppose that $G$ is a $rn$-vertex  $r$-partite graph with vertex partition $\{V_1,\dots, V_r\}$ into sets of size $(1\pm \eta)n$ and such that $G[V_i,V_j]$ is $(\eta,d)$-superregular for all $i\neq j \in [r]$.
\begin{itemize}
\item[(i)] If $\phi$ is a $(1+\eta) \frac{f}{h}\binom{r}{2}dn$-bounded, locally $\eta n$-bounded colouring of $G$, then $G$ has an $\alpha$-decomposition into rainbow $\alpha$-spanning $F$-factors.
\item[(ii)] If $\phi$ is a  $(1-\alpha) \frac{f}{h}\binom{r}{2}dn$-bounded, locally $\eta n \log^{-2a}{n}$-bounded colouring of $G$ and $|V_i|=n$ for each $i\in [r]$, then $G$ has an $\alpha$-decomposition into rainbow $F$-factors.
\end{itemize}
\end{theorem}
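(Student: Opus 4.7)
The proof of Theorem~\ref{thm: decomp bip} follows very closely the proofs of Theorems~\ref{thm: approx decomp} and~\ref{thm: perfect decomp}, with the two modifications flagged by the authors. Throughout, the relevant family is $\cF := R_{G,\cX,\cV}(F)$ (or its analogues on sub-parts), i.e.\ rainbow copies of $F$ respecting the given partitions.

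For part~(i), I would essentially transcribe the proof of Theorem~\ref{thm: approx decomp}, replacing its use of Lemma~\ref{counting quasirandom} by Lemma~\ref{counting partite}. Concretely, I would first remove the edges incident to $\Ir_{G,\cV}(\zeta,d) \cup \Ir_G^\phi(\zeta n)$ using Lemmas~\ref{lem: irregular degree} and~\ref{lem: colour irregular degree} together with Proposition~\ref{prop: edge deletion regular}, so that every remaining edge satisfies hypothesis~\ref{lem counting partite 3}. Lemma~\ref{counting partite} (with the same $f$ as in Theorem~\ref{thm: decomp bip}) then gives
\[
r_{G,\cX,\cV}(F,v) \;=\; (1\pm\eps)\tfrac{r!\, f\, d^h\, n^{fr-1}}{|\Aut_\cX(F)|},\qquad
r_{G,\cX,\cV}(F,vw) \;=\; (1\pm\eps)\tfrac{r!\, h\, d^{h-1}\, n^{fr-2}}{\binom{r}{2}|\Aut_\cX(F)|},
\]
whose ratio is the right-hand side of~\ref{F decomp 3}, with the $\binom{r}{2}$ factor explaining the form of the global boundedness $(1+\eta)\tfrac{f}{h}\binom{r}{2}dn$ in the statement. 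The codegree conditions \ref{F decomp 1} and \ref{F decomp 5} follow from trivial counting combined with the local $\eta n$-boundedness, exactly as in Theorem~\ref{thm: approx decomp}, and \ref{F decomp 2} is a direct consequence of the global boundedness. A single application of Lemma~\ref{lem: random F decomp} then delivers the $\alpha$-decomposition into rainbow $\alpha$-spanning $F$-factors respecting $(\cX,\cV)$.

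For part~(ii), I would follow the much longer proof of Theorem~\ref{thm: perfect decomp} almost verbatim, with the following two changes. First, choose $b = \Theta(\log n)$ to be a prime power, randomly split each $V_i$ into $b$ equal sub-parts $V_i^{(1)},\dots,V_i^{(b)}$ by Lemmas~\ref{qr to sr} and~\ref{lem: graph partition}, and use MacNeish's theorem~\cite{MN} in place of Theorem~\ref{cl: r-factor}: since $b-1 \geq r-2$ mutually orthogonal Latin squares of order $b$ exist, there is a resolvable transversal design $TD(r,b)$, i.e.\ a decomposition of the complete $r$-partite graph on the sub-parts into $b$ parallel classes, each class consisting of $b$ disjoint transversals $T = \{V_1^{(s_1)},\dots,V_r^{(s_r)}\}$ such that every pair of sub-parts from distinct $V_i$'s is covered by exactly one transversal. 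Second, for each such transversal set $G_T := G[V_1^{(s_1)},\dots,V_r^{(s_r)}]$, and apply Lemma~\ref{counting partite} (instead of Lemma~\ref{counting quasirandom}) to $G_T$ and $F$ with the given partitions $\cX,\cV$ to obtain the partite analogues of \eqref{corr count 1} and \eqref{corr count 2}. From here the argument proceeds identically: randomly partition the colours into $q+1$ groups (with $q = b$ equal to the number of transversals per parallel class), apply Lemma~\ref{lem: random F decomp} within each $G_T(\phi, I_r)$, concatenate the resulting almost-spanning factors across a parallel class to produce rainbow $\delta$-spanning $F$-factors of $G$ respecting $(\cX,\cV)$, and complete each such factor into a genuine rainbow $F$-factor by applying Theorem~\ref{blowup} to the uncovered vertices plus a small sacrificial collection of transversals, with colours from $I_{q+1}$.

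The main technical obstacle, exactly as in Theorem~\ref{thm: perfect decomp}, is the iterated application of the rainbow blow-up lemma: I would need to re-establish the partite versions of properties $\mathbf{A}$, $\mathbf{B}$, $\mathbf{C_i}$, $\mathbf{D_i}$, $\mathbf{E_i}$ from that proof, ensuring that the random choice of sacrificial transversals distributes each vertex and each colour in $I_{q+1}$ in a balanced way across iterations, so that the colouring remains $\delta_2\cdot(\delta_1 n/f)$-bounded and vertex degrees into the sacrificial set remain small after subtracting previously used factors. The concentration arguments (Lemma~\ref{Chernoff} and Theorem~\ref{Azuma}) carry through unchanged because the resolvable TD ensures each vertex lies in exactly one sub-part per $V_i$, and because the partite counting from Lemma~\ref{counting partite} produces the same structural estimates as in the quasirandom setting, with the $\binom{r}{2}$ factor automatically absorbed into the boundedness threshold.
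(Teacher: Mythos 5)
Your proposal matches the paper's intended proof: both parts follow Theorems~\ref{thm: approx decomp} and~\ref{thm: perfect decomp} verbatim, with MacNeish's theorem supplying the resolvable $K_r$-decomposition of $K_r(b)$ (so $q=b$ blocks per parallel class) in place of Theorem~\ref{cl: r-factor}, and Lemma~\ref{counting partite} replacing Lemma~\ref{counting quasirandom} throughout; the $\binom{r}{2}$ factor in the boundedness threshold then emerges exactly as you describe. One small slip: a \emph{resolvable} $TD(r,b)$ requires $r-1$ mutually orthogonal Latin squares rather than $r-2$, though since $b=\Theta(\log n)\gg r$ this is immaterial.
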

\COMMENT{
(i) is done in exactly the same way as Theorem~\ref{thm: approx decomp}: similarly as in the proof of Lemma~\ref{qr to sr} one can obtain a spanning subgraph $G$ of $G'$ satisfying \eqref{lem counting 2}--\eqref{lem counting partite 3}. (One again has to show that the graph formed by the edges violating \eqref{lem counting partite 3} has small maximum degree.) We removed only a small proportion of the edges, so the graph is still superregular between the parts. Next, we count $R_{G,\cX,\cV}(F,v)$, $R_{G,\cX,\cV}(F,uw)$ using Lemma~\ref{counting partite}.
Finally, we apply Lemma~\ref{lem: random F decomp} to the rainbow copies of $F$ that respect the partitions $\cX$, $\cV$ (that is, contributing to $R_{G,\cX,\cV}(F)$). The verification of the conditions is done in the same way.
Indeed, to check e.g. \ref{F decomp 2}  note that by
Lemma~\ref{counting partite} the number of rainbow copies of $F$ containing an edge of a fixed color is at most $(1+\eta)\frac fh {r\choose 2}dn\cdot (1+\eps/3) r_{G,\cX,\cV}(F,uw)\le (1+\eps)r_{G,\cX,\cV}(F,v)$ for any $v,uw$. To see that \ref{F decomp 3} holds, note that, due to the application of Lemma~\ref{counting partite} we have $$\frac{|R_{G,\cX,\cV}(F,v)|}{|R_{G,\cX,\cV}(F,uw)|}=(1\pm 3\eps)\frac{{r\choose 2}fdn}{h} = (1\pm 4\eps)\frac {fr|E|}{h|V|}.$$
Note that $|V(F)|=fr$ so $\frac {fr|E|}{h|V|}$ really is the right expression here.)

To prove (ii), we fix prime $b\sim \eta^{-1/2a}\log n$ and use the result of MacNeish \cite{MN} to get $b-1$ pairwise orthogonal Latin squares of size $b$. Taking some $r-1$ of them, we can obtain an edge-decomposition of complete $r+1$-partite $K_{r+1}(b)$ into edge-disjoint copies of $K_{r+1}$. (Indeed, if the Latin squares have values $\ell^1(i,j),\ldots, \ell^{r-1}(i,j)$ on the entry $(i,j)$, then we can define the corresponding copy of $K_{r+1}$ by $(i,j,\ell^1(i,j),\ldots, \ell^{r-1}(i,j))$. Then it is easy to see that if any two entries are fixed, then the latter are determined uniquely.) Now, the union of all complete graphs in the decomposition containing vertex $i$ from part $r+1$ forms a spanning factor on the first $r$ parts. Thus, we obtain an edge-decomposition of $K_r(b)$ into edge-disjoint copies of $K_r$, grouped in $b$ groups, each forming a spanning factor. (In other words, we obtain a resolvable $K_r$-decomposition of $K_r(b)$.) Note that this time the number $q$ of $K_r$'s in each factor satisfies $q=b$ (instead of $q=b/r$ as before).

Next, we split the vertices of each part using a random partition $(1/b,\ldots, 1/b)$. The graphs between the parts are still superregular. Each $K_r=\{i_1,\ldots, i_r\}$ from the decomposition from the previous paragraph corresponds to a $r$-partite graph $G_i^j$ between parts $U^{1}_{i_1},\ldots, U_{i_r}^r$ (here the upper index refers to the part in $G'$).  We split the colors and get an approximate decomposition of each $G_i^j$ in the same way as in Theorem~\ref{thm: perfect decomp}. Let us check that \ref{F decomp 2} holds.
An analogue of \eqref{eqcolor} in our situation states that
$$e(G[U_{i_1}^1,\ldots, U_{i_r}^r](\phi,c)=\frac 1{b^2}e(G(\phi,c))\pm \frac{\zeta}{b^2}n.$$
Lemma~\ref{counting partite} implies that
$$\frac{r_{G_i^j}(F,v)}{r_{G_i^j}(F,uw)} =  (1\pm 2\eps)\frac{ fd{r\choose 2}n/b}{h}=(1\pm 3\eps)\frac{fr|E(G_i^j)|}{h|V(G_i^j)|}.$$
Furthermore, $|E(G_i^j(\phi, I_r))|=(1\pm \eps/2)p_r|E(G_i^j)|$ by
Lemma~\ref{lem: colour partition}, where $p_r=(1-\gamma)/q=(1-\gamma)/b$. Using the equalities above and the boundedness of the colouring, for each colour $c\in I_r$ we have
$$|E(G_i^j(\phi,c))|\le \frac{(1-\alpha)\frac{f{r\choose 2}dn}{h} \pm\zeta n}{b^2}\le \frac{(1-2\gamma)f{r\choose 2}dn}{h b^2}\le \frac{(1-\gamma)fp_r{r\choose 2}dn}{bh}\le \frac{fr|E(G_i^j(\phi,I_r))|}{h|V(G_i^j)|}.$$
(Similarly as for (i), we really want $fr$ instead of $f$ here.)
Thus, for any $v\in V(G_i^j)$ and $c\in I_r$, the number of rainbow copies of $F$ in $G_i^j(\phi,I_r)$ containing  an edge of colour $c$ is at most $$|E(G_i^j(\phi,c))|\cdot \max_{uw\in E(G_i^j)}\big\{r_{G_i^j(\phi,I_r)}(F,uw)\big\}\le \frac1{1- 5\eps} r_{G_i^j(\phi,I_r)}(F,v),$$ and condition \ref{F decomp 2} is satisfied.

 Everything stays the same until the stage with the definition of $H_{i,k}$. Denote $V_2^j(i,k):=V_2(i,k)\cap V_j$, $j\in [r]$ (recall that $V_j$, $j\in[r]$, are the parts of our initial graph $G$)
and define  $V_1^j(i,k)$, $V'^{j}(i,k)$ similarly. Thus $\cD_i(k)\setminus V_1(i,k)$ is an $F$-factor on $V(G)\setminus(V_1(i,k)\cup V_2(i,k))$ and for each $j$ this factor covers precisely all vertices in $V_j\setminus (V_1^j(i,k)\cup V_2^j(i,k))$.
Note that (a) $|V'^{j}(i,k)|=|V'^{j'}(i,k)|$ for all $j,j' \in [r]$
and (b) $f \mid  |V'^{j}(i,k)|$
and by definition (c) $|V_1^{j}(i,k) \setminus V'^{j'}(i,k)|=|V_1^{j'}(i,k) \setminus V'^{j'}(i,k)|$ for all  $j,j' \in [r]$.
Thus (d)
$$|V_1^{j}(i,k)|+|V_2^{j}(i,k)|=|V_1^{j}(i,k)\setminus V'^{j}(i,k)|+|V'^{j}(i,k)|=|V_1^{j'}(i,k)|+|V_2^{j'}(i,k)|=:n'$$
 for all  $j,j' \in [r]$  and (e) $f \mid n'$.

Define
$H_{i,k}$ to be a disjoint union of $n'/f$ copies of $F$.
Define a partition of $V(H_{i,k})$ into $X_0,\dots,X_r$ as follows
(where we will embed $X_j$ into $V_j$ for $j \in [r]$).
Start with an equipartition of $V(H_{i,k})$ into $X_1,\dots,X_r$, so that each of the
$n'/f$ copies of $F$ has $f$ vertices in each class.
Now for each $j\in[r]$ move $|V^{j}_2(i,k)|$ vertices $v_1^j,\ldots, v_{t_j}^j$
from $X_j$ into $X_0$. We do this in such a way that no copy of $F$ contains more that $1$ such vertex (this is possible as
$|V_2^{j}(i,k)| \le (2 \delta/ \delta_1) |V_1^{j}(i,k)|$).
Next, we define $\psi'_{i,k}$ by mapping $v_1^j,\ldots, v_{t_j}^j$ into $V^j_2(i,k)$.

 The rest of the proof is essentially the same
(note that  we make use of the fact that condition (A5) only needs to hold for those
$j$ with
$N_{H_{i,k}}(x) \cap X_j \neq \emptyset$).
}

Note that, if $F=K_2$, then the above theorem implies that in a properly coloured complete balanced bipartite graph on $2n$ vertices, if no colour appears more than $(1-o(1))n$ times, then we can obtain a $o(1)$-decomposition into rainbow perfect matchings. This was first announced by Montgomery, Pokrovskiy and Sudakov~\cite{MPS_Harvard}. In terms of arrays, this result states that any $n \times n$-array filled with symbols, none of which appears more than $(1-o(1))n$ times in total or is repeated in any row or column, can be $o(1)$-decomposed into full transversals. (Note that our theorem has a much weaker condition on the repetitions of symbols in rows or columns.)


\subsection{Further remarks and extensions}
We can easily deduce the following pancyclicity result from Theorem~\ref{thm: spanning cycle} and Theorem~\ref{blowup}:
  For any $\eps>0$ there exist $\eta>0$ and $n_0$ such that whenever $n\ge n_0$, any $(1-\eps)\frac{n}2$-bounded, locally $\frac{\eta n}{\log^{4} n}$-bounded colouring of $K_n$ contains a rainbow cycle of any length.
  (Indeed, to obtain cycles of length $k$ for $k$ linear in $n$, apply
  Theorem~\ref{thm: spanning cycle} to a random subset of $V(G)$ of size $k$,
  and for shorter cycles, apply Theorem~\ref{blowup} to $G$.)%
 \COMMENT{For $N\ge \mu n$ (with $\eta \ll \mu \ll 1$) it follows by taking a random subset $V'$ of $V(G)$ of size $N$ and finding a Hamilton cycle in $G[V']$ via Theorem~\ref{thm: spanning cycle}. (The graph is still quasirandom by Lemma~\ref{qr to sr}.
Moreover, the coloring has necessary global boundednes conditions due to \eqref{colorbound} and the application of Lemma~\ref{lem: graph partition}.
Indeed the probability of failure is $e^{-\zeta^3 \mu n/ (\eta n/\log n)} =
n^{-\zeta^3 \mu /\eta} \le n^{-3} \ll 1/m$ if $\eta \ll \mu, \zeta$. The initial local boundedness is enough.)

If $N<\mu n$, then we can select three random subsets $V_1,V_2,V_3$ of size $\mu^{2/3}n$ and apply Theorem~\ref{blowup} directly with $r=3$, $X_0$ and $V_0$ being empty, and $H$ being a cycle of length $N$. The conditions of the blow-up lemma are obviously satisfied with $\mu^{1/3}$ playing the role of $\delta_2$. Indeed, the colouring is $\mu^{1/3}\cdot \mu^{2/3}n$-bounded using Lemma~\ref{lem: graph partition} and out of the conditions of Lemma~\ref{blowup} we only need to verify \ref{lem blowup 3} and \ref{lem blowup 4}, but both follow from Lemma~\ref{qr to sr}. Note that we need three sets since odd cycles have chromatic number $3$.
}
This (up to logarithmic factors) extends a result of Frieze and Krivelevich~\cite{FK08}, who proved this for $\eta n$-bounded colourings.

The conditions of Theorem~\ref{thm: approx decomp} (as well as in its bipartite analogue) may be substantially weakened if $F$ is an edge. More precisely, we can prove the following theorem, which applies to {\it sparse} graphs.

\begin{theorem}\label{thm: approx decomp edge}
For any $\delta>0$, there exist $\eps>0$ and $n_0$ such that the following holds for all $n\ge n_0$ and $r\ge \eps^{-1}$. Suppose that $G$ is an $n$-vertex graph satisfying $d(v)=(1\pm \eps)r$. If $\phi$ is a $(1+\eps)r$-bounded, locally  $\eps r$-bounded colouring of $G$, then $G$ contains a $2\delta$-decomposition into $\delta$-spanning rainbow matchings.
\end{theorem}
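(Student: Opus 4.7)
The plan is to reduce Theorem~\ref{thm: approx decomp edge} to a single application of the defect version of the Pippenger-Spencer theorem given as Lemma~\ref{lem: random matching}~(i), via a standard auxiliary 3-uniform hypergraph. Since $F=K_2$, the heavier Lemma~\ref{lem: random F decomp} is not available (it requires $f\ge 3$), but it is also unnecessary here, and we can bypass the quasirandomness assumptions used in Theorem~\ref{thm: approx decomp} entirely.

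Choose $\eps\ll\delta$, let $[m]$ denote the set of colours actually used by $\phi$, and construct a 3-uniform hypergraph $\cH$ on vertex set $V(G)\cup[m]$ by declaring each edge $uv\in E(G)$ of colour $c=\phi(uv)$ to correspond to the hyperedge $\{u,v,c\}$. Matchings of $\cH$ are in bijection with rainbow matchings of $G$. Directly from the hypotheses, every $v\in V(G)$ has $d_\cH(v)=d_G(v)=(1\pm\eps)r$; every colour $c$ has $d_\cH(c)=|E(G(\phi,c))|\le(1+\eps)r$; the codegree of two graph vertices is at most $1$; the codegree of a graph vertex and a colour is at most $\eps r$ by local boundedness; and two distinct colours have codegree $0$. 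In particular $\Delta(\cH)\le(1+\eps)r$, $\Delta_2(\cH)\le\eps r\le 2\eps\,\Delta(\cH)$, and every $v\in V(G)$ already satisfies $d_\cH(v)\ge(1-3\eps)\Delta(\cH)$, so taking $V:=V(G)$ places all of $V(G)$ inside the high-degree set $V'$ appearing in the statement of Lemma~\ref{lem: random matching}.

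Applying Lemma~\ref{lem: random matching}~(i) with this $V$ (and with $2\eps$ playing the role of $\eps$) produces at least $T:=(1-\delta)\Delta(\cH)$ edge-disjoint matchings $\cM_1,\dots,\cM_T$ of $\cH$, each covering at least $(1-\delta)n$ vertices of $V(G)$, with every $v\in V(G)$ lying in at least $(1-\delta)\Delta(\cH)$ of them. Each $\cM_i$ translates back to a rainbow matching of $G$ that is $\delta$-spanning. For the decomposition count, for each $v\in V(G)$ the number of edges incident to $v$ missed by every $\cM_i$ is at most $d_G(v)-(1-\delta)\Delta(\cH)\le(1+\eps)r-(1-\delta)(1-\eps)r\le 2\delta r$; summing over $v$ and halving gives at most $\delta n r\le 2\delta\,|E(G)|$ missed edges, as required. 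The only (mild) obstacle is that colour vertices may have degrees far below $\Delta(\cH)$, so $\cH$ is far from regular and the original Pippenger-Spencer theorem does not directly apply; but this is precisely the situation Lemma~\ref{lem: random matching}~(i) is designed to handle, by only insisting on coverage of the user-supplied high-degree side $V$.
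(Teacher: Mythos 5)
Your proof is correct, and it takes a genuinely cleaner route than the paper's. The paper replaces $F=K_2$ by the graph consisting of two disjoint edges (so that the hypothesis $f\ge 3$ of Lemma~\ref{lem: random F decomp} is met), and then invokes that lemma with $\cF$ being the family of pairs of disjoint, differently-coloured edges; this in turn requires checking the five conditions \ref{F decomp 1}--\ref{F decomp 5} via a table of crude size estimates, and the lemma internally runs the random-splitting construction and Lemma~\ref{lem: random matching}(ii) on the auxiliary hypergraph with vertex set $E\cup(V\times[t])\cup([m]\times[t])$. You instead observe that for matchings the random splitting is unnecessary: the naive $3$-uniform hypergraph on $V(G)\cup[m]$ already has $\Delta(\cH)=(1\pm\eps)r$ balanced with $\Delta_2(\cH)\le\eps r$ (the key point being that the global bound $(1+\eps)r$ makes the colour-vertex degrees comparable to, rather than much smaller than, the graph-vertex degrees, which is exactly what fails for general $F$), and Lemma~\ref{lem: random matching}(i) directly outputs the required family of edge-disjoint matchings. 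Your degree, codegree, and edge-counting checks are all correct; the only very minor slip is the mismatch between the displayed bound $d_\cH(v)\ge(1-3\eps)\Delta(\cH)$ and the stated choice of $2\eps$ as the Lemma~\ref{lem: random matching} parameter (in fact $(1-\eps)/(1+\eps)\ge 1-2\eps$, so $2\eps$ works; and $3\eps$ would also work), which does not affect the argument. What your approach buys is a shorter, more transparent proof that makes explicit why the edge case is elementary; what the paper's approach buys is uniformity, as it reuses the same machinery as for general $F$-factors with no new lemma application, at the cost of an artificial doubling of $F$ and extra bookkeeping.
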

\begin{proof}
 We apply Lemma~\ref{lem: random F decomp} with $\cF$ being a collection of {\it pairs} of disjoint edges of distinct colour. First of all, let us calculate the values of different subfamilies of $\cF$ (in the notation of Lemma~\ref{lem: random F decomp}). For every $v\in V(G)$ we have $$|\cF(v)|=(1\pm \eps)r\cdot (1\pm2\eps)\frac {rn}2=(1\pm 4\eps)\frac{r^2n}{2}.$$ Indeed, we first choose an edge adjacent to $v$, and then another edge in $G$ of another colour and disjoint from the first one. The number of edges of $G$ is  $(1\pm \eps)\frac{rn}2$, and the two conditions imposed on the choice of the second edge exclude at most $(3+\eps)r$ edges. We present the other calculations more concisely. For all edges $uw, u'w'\in E(G)$, vertices $v_1,v_2\in V(G)$ and colours $c_1,c_2\in [m]$ we have
 \begin{alignat*}{3}
 |\cF(uw)|\ &=&&\ (1\pm 2\eps)\frac{rn}2, && \\
 |\cF(c_1)|\ &\le&&\  (1+\eps)r\cdot (1+\eps)\frac{rn}2&&\le (1+3\eps)\frac{r^2n}{2},\\
 |\cF(v_1,v_2)|\ &\le &&\ (1+\eps)\frac{rn}2+((1+\eps)r)^2 &&\le 2rn,\\
 |\cF(c_1,v_1)|\ &\le&&\ \eps r\cdot(1+\eps)\frac{rn}2+((1+\eps)r)^2&&\le 2\eps r^2n, \\
 |\cF(c_1,c_2)|\ &\le &&\ ((1+\eps)r)^2&&\le 2r^2,\\
 |\cF(uw,u'w')|\ &\le&&\ 1.
 \end{alignat*}
Using the displayed formulas and the fact that $r>\eps^{-1}$, it is easy to see that the conditions of Lemma~\ref{lem: random F decomp} are satisfied with $8\eps$ playing the role of $\eps$.
\end{proof}

\noindent {\bf Postscript.} As mentioned in Section~\ref{sec:multipartite}, Montgomery, Pokrovskiy and Sudakov had earlier announced the case $F=K_2$ of Theorem~\ref{thm: decomp bip}(ii) for proper (i.e.~locally 1-bounded) colourings. After completing our manuscript, we learned that they independently obtained some other related results to ours. Slightly more precisely, they also obtained similar approximate decomposition results for rainbow Hamilton cycles, and in addition, they also obtained approximate decomposition results for rainbow trees, but do not consider general rainbow $F$-factors.
For their results on rainbow spanning subgraphs, the colourings considered in~\cite{MPS} are always proper. On the other hand, the global boundedness condition in~\cite{MPS} is less restrictive than ours. They also deduce from their results a conjecture of Akbari and Alipur on transversals in generalized Latin squares. This conjecture was proved independently by Keevash and Yepremyan~\cite{KY}. 

More recently, the Brualdi-Hollingsworth conjecture as well its strengthening
by Constantine has been proved by Glock, K\"uhn, Montgomery and 
Osthus~\cite{GKMO}, i.e.~every sufficiently large optimally edge-coloured complete graph 
has a decomposition into isomorphic rainbow spanning trees.
Amongst others, the proof makes use of some of the ideas in the current paper.
The related conjecture of Kaneko, Kano, and Suzuki (which allows for 
not necessarily optimal proper colourings) remains an interesting open problem.

\bibliographystyle{amsplain}

\providecommand{\bysame}{\leavevmode\hbox to3em{\hrulefill}\thinspace}
\providecommand{\MR}{\relax\ifhmode\unskip\space\fi MR }
\providecommand{\MRhref}[2]{%
  \href{http://www.ams.org/mathscinet-getitem?mr=#1}{#2}
}
\providecommand{\href}[2]{#2}

\appendix

\appendix
\section{Rainbow counting lemma}
The following is a variation of the well-known subgraph counting lemma, which we state without proof.
\COMMENT{
To see how we obtain the constants $\frac{r! (f!)^r}{|\Aut_{\cX}(F)| }$ and $ \frac{ h r! (f!)^r}{
\binom{r}{2}f^2 |\Aut_{\cX}(F)|}$,
consider a complete $r$-partite graph $H=K_{f,\dots,f}$ on vertex set $[r]\times [f]$ such that $(i,f')(j,f'') \in E(H)$ if and only if $i\neq j$.
Let $\cU =\{ \{(1,1),\dots, (1,f)\}, \{(2,1),\dots, (2,f)\},\dots, \{(r,1),\dots, (r,f)\}\}$.
It is easy to see that the following holds.
\begin{equation*}
\begin{minipage}[c]{0.9\textwidth}\em
An embedding $\pi$ of $F$ into $G$ respects both $(\cX,\cV')$ and $(V(F),\cV)$ if and only if there exists a bijection $g : V(F)\rightarrow V(H)$ respecting $(\cX,\cU)$ such that for each $x\in V(F)$, we have $\pi(x) \in V_{g(x)}$.
\end{minipage}
\end{equation*}
Consider all bijections of $V(F)$ into $V(H)$ which respect $(\cX,\cU)$, then there are $r!(f!)^r$ such maps. Among them, $|\Aut_{\cX}(F)|$ of them are isomorphic. So, there are $\frac{r!(f!)^r}{|\Aut_{\cX}(F)|}$ distinct copies of $F$ in $H$ respecting $(\cX,\cU)$.

For the second part, again, there are $\frac{r!(f!)^r}{|\Aut_{\cX}(F)|}$ distinct copies of $F$ in $H$ respecting $(\cX,\cU)$.
Consider all bijections of $V(F)$ into $V(H)$ which respect $(\cX,\cU)$, then there are $r!(f!)^r$ such maps. Among them, the probability that some edge of $F$ is mapped to $(1,1)(2,1)$ is $h/({\binom{r}{2}}f^2)$, so
there are $h (r!)(f!)^r/ ({\binom{r}{2}}f^2)$ many such maps $V(F)$ into $V(H)$ respecting $(\cX,\cU)$ and mapping some edge of $F$ into $(1,1)(2,1)$.
As $|\Aut_{\cX}(F)|$ of them are isomorphic, we divide by this, then we obtain $ \frac{ h r! (f!)^r}{
\binom{r}{2}f^2 |\Aut_{\cX}(F)|}$.
}

\begin{lemma}\label{counting}
Suppose $0<1/n \ll \zeta\ll \epsilon \ll 1/r, d, 1/f, 1/h \leq 1$.
Suppose that $F$ is an $h$-edge graph with vertex partition $\cX=\{X_1,\dots, X_r\}$ into independent sets with $|X_i|=f $, and $G$ is a graph with vertex partition $\cV=\{V_{1,1},\dots, V_{1,f},V_{2,1},\dots, V_{r,f}\}$.
For each $i\in [r]$, let $V_i := \bigcup_{f'\in [f]} V_{i,f'}$ and let $\cV':=\{V_1,\dots, V_r\}$. Suppose that a vertex $u\in V(G)$ and an edge $vw\in E(G)$ are given with $v\in V_{j'}$ and $w\in V_{j''}$.
Suppose the following hold.
\begin{enumerate}[label=\text{{\rm (A\arabic*)$_{\ref{counting}}$}}]
\item \label{lem counting 1} For each $(i,f')\in [r] \times [f]$, we have $|V_{i,f'}| = (1\pm \zeta )n$.
\item \label{lem counting 2} For all $i\neq j \in [r]$ and $f',f''\in [f]$, the bipartite graph $G[V_{i,f'},V_{j,f''}]$ is $(\zeta,d)$-superregular.

\item \label{lem counting 3} Either
$d_{G,V_{i,f'}}(v,w) = (d^2 \pm \zeta)|V_{i,f'}|$ for all $i\in [r]\setminus\{j',j''\}$ and $f' \in [f]$, or $F$ is triangle-free.
\end{enumerate}
Then the number of copies of $F$ in $G$ containing $u$,
and respecting both $(\cX,\cV')$ and $(V(F),\cV)$ is
$$(1\pm \epsilon) \frac{r! (f!)^r d^h n^{fr-1}}{|\Aut_{\cX}(F)| }$$ and the number of copies of $F$ in $G$ containing $vw$, and respecting both $(\cX,\cV')$ and $(V(F),\cV)$ is $$(1\pm \epsilon) \frac{ h r! (f!)^r d^{h-1} n^{fr-2}}{
\binom{r}{2}f^2 |\Aut_{\cX}(F)|}.$$
\end{lemma}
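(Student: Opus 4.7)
The plan is to apply the standard subgraph counting lemma for an $(r\times f)$-partite superregular system and then sum over a simple bookkeeping of labelings; condition~\ref{lem counting 3} appears only in the edge-count estimate to handle triangles. I would define a \emph{labeling} to be any injective map $\phi_0:V(F)\to[r]\times[f]$ that respects both partitions; equivalently, $\phi_0(x)=(\pi(i),\sigma_i(x))$ whenever $x\in X_i$, where $\pi$ is a bijection $[r]\to[r]$ and each $\sigma_i$ is a bijection $X_i\to[f]$. There are exactly $r!(f!)^r$ labelings, and each unlabeled partition-respecting copy $\bar F\subseteq G$ of $F$ arises from exactly $|\Aut_{\cX}(F)|$ pairs consisting of a labeling $\phi_0$ and a graph isomorphism $\phi:F\to\bar F$ with $\phi(x)\in V_{\phi_0(x)}$ for all $x$.

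For a fixed labeling $\phi_0$, I would estimate the number of graph embeddings $\phi:V(F)\to V(G)$ with $\phi(x)\in V_{\phi_0(x)}$: first unconstrained, then with the added constraint $\phi(x_0)=u$ (for the vertex count), and finally with $\phi(x_0)=v$, $\phi(y_0)=w$ for some edge $x_0y_0\in E(F)$ (for the edge count). Embedding the vertices of $F$ one at a time in an order starting from the prescribed vertices, and using that each bipartite piece $G[V_{i,f'},V_{j,f''}]$ is $(\zeta,d)$-superregular, the standard counting lemma for regular pairs yields $(1\pm\epsilon/2)d^hn^{fr}$, $(1\pm\epsilon/2)d^hn^{fr-1}$, and $(1\pm\epsilon/2)d^{h-1}n^{fr-2}$ embeddings per labeling in the three cases. (Injectivity is automatic, since distinct vertices of $F$ are sent to distinct slots.) Summing over compatible labelings and dividing by $|\Aut_{\cX}(F)|$ yields the two displayed formulas: for the vertex count all $r!(f!)^r$ labelings are compatible (take $x_0:=\phi_0^{-1}(j^*,f^*)$ where $u\in V_{j^*,f^*}$), while for the edge count the number of compatible (labeling, ordered-edge) pairs is $2h\cdot(r-2)!\cdot((f-1)!)^2\cdot(f!)^{r-2}=hr!(f!)^r/(\binom{r}{2}f^2)$, which is exactly the required numerical factor.

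The one delicate point is the embedding count when both images of $v$ and $w$ are fixed: as the extension reaches a common neighbour $z$ of $x_0,y_0$ in $F$, its admissible images form $N_G(v,w)\cap V_{\phi_0(z)}$, whose size we need to equal $(d^2\pm\zeta^{1/2})|V_{\phi_0(z)}|$. This is precisely what the codegree clause of condition~\ref{lem counting 3} guarantees. If $F$ is triangle-free, no such $z$ exists and the codegree clause is unnecessary, which accounts for the alternative in~\ref{lem counting 3}. All other extension steps rely only on ordinary degrees controlled by the superregularity assumption~\ref{lem counting 2}, so they present no difficulty, and the small multiplicative errors compound to at most $(1\pm\epsilon)$ because $\zeta\ll\epsilon$.
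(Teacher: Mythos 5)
The paper states this lemma without proof, describing it as ``a variation of the well-known subgraph counting lemma,'' and its explanatory comment indicates exactly the bookkeeping of bijections of $V(F)$ into the type graph $K_{f,\dots,f}$ on $[r]\times[f]$ that underlies your notion of a labeling, so your approach matches the intended one. Your enumeration of labelings, the division by $|\Aut_{\cX}(F)|$, the arithmetic $2h(r-2)!\bigl((f-1)!\bigr)^2(f!)^{r-2}=hr!(f!)^r/\bigl(\binom{r}{2}f^2\bigr)$, and the observation that the codegree clause of (A3) is invoked only when embedding a common $F$-neighbour of the two prescribed endpoints (a vertex necessarily landing in some $V_{i,f'}$ with $i\notin\{j',j''\}$, and nonexistent when $F$ is triangle-free) are all correct; the only slight imprecision is that in later extension steps the admissible set is an intersection of several neighbourhoods rather than a single one, but those are the ones averaged over and hence controlled by ordinary $(\zeta,d)$-regularity, whereas the fixed pair $v,w$ is the only pair that genuinely needs the extra codegree hypothesis, which is exactly the point you identify.
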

The proof of the following result is also straightforward, so again we omit it. 
We will combine it with Lemma~\ref{counting} to derive both Lemma~\ref{counting partite} and~\ref{counting quasirandom}.

\begin{lemma}\label{non-rainbow counting}
Let $0<1/n \ll \zeta \ll \epsilon \ll 1/f, 1/C \ll 1$. Let $F$ be an $f$-vertex graph and
let $G$ be an $n$-vertex graph. Suppose that $\phi$ is a $(Cn,\zeta n)$-bounded colouring of $G$. Fix a vertex $u\in V(G)$ and an edge $vw\in E(G)$. Suppose that either $vw \notin \Ir^{\phi}_{G}(\zeta n)$ or $F$ is triangle-free. Then the number of non-rainbow copies of $F$ in $G$ containing $u$ is at most $\epsilon n^{f-1}$ and the number of non-rainbow copies of $F$ in $G$ containing $vw$ is at most $\epsilon n^{f-2}$.
\end{lemma}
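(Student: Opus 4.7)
The plan is to upper bound non-rainbow copies of $F$ in $G$ by summing, over pairs of edges $\{e_1,e_2\} \subseteq E(F)$, the number of embeddings $\psi : V(F) \to V(G)$ with $\phi(\psi(e_1)) = \phi(\psi(e_2))$ satisfying the required incidence with $u$ or $vw$. As a subgraph of $F$, such a pair $\{e_1,e_2\}$ either shares a vertex (forming a $P_2$) or is vertex-disjoint, and I handle the two cases separately. The total number of monochromatic $P_2$'s in $G$ is at most $\zeta n \cdot e(G) \leq \zeta n^3$ by local $\zeta n$-boundedness (since $\sum_c \binom{d_{G(\phi,c)}(y)}{2} \leq \frac{\zeta n}{2} d_G(y)$ at each $y$), and the total number of monochromatic pairs of disjoint edges is at most $\sum_c \binom{|E(G(\phi,c))|}{2} \leq \frac{Cn}{2} e(G) \leq C n^3$ by global $Cn$-boundedness.

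For the first assertion, I fix $\{e_1,e_2\}\subseteq E(F)$ and count non-rainbow embeddings containing $u$ by choosing a monochromatic pair $(e_1',e_2')$ in $G$, assigning $u$ to some vertex of $F$, and filling in the remaining vertices. When $\{e_1,e_2\}$ is a $P_2$, the monochromatic pair uses $3$ vertices of $G$; extending to the remaining $f-3$ vertices costs at most $n^{f-3}$ if $u$ is among the fixed three and $O_f(n^{f-4})$ otherwise, and since the number of monochromatic $P_2$'s through any fixed vertex is at most $\zeta n^2$, the contribution is $O_f(\zeta n^{f-1})$. When $\{e_1,e_2\}$ is a pair of disjoint edges, the analogous count using the global bound gives $O_f(C n^{f-2})$. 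Summing over the $O_f(1)$ choices of $\{e_1,e_2\}$ yields at most $\epsilon n^{f-1}$ using $\zeta \ll \epsilon \ll 1/f, 1/C$ and $1/n \ll \epsilon$.

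For the second assertion we additionally need $vw$ to lie in the embedded copy, which pins two vertices and gains a factor $1/n$ in most subcases. The extra case to consider is when $vw$ itself is one of the same-coloured edges: then the other edge has colour $\phi(vw)$, so there are at most $2\zeta n$ choices if it shares a vertex with $vw$ (by local boundedness) and at most $Cn$ choices if it is disjoint (by global boundedness), contributing $O_f(\zeta n^{f-2} + C n^{f-3})$. The main obstacle is the remaining delicate subcase: when the two same-coloured edges form a monochromatic $P_2$ $v\,z\,w$ in $G$ and close a triangle with $vw$ in the embedded copy. If $F$ is triangle-free, this configuration cannot occur; otherwise, the number of such $z$ is $c_G^\phi(v,w) < \zeta n$ by the hypothesis $vw \notin \Ir_G^\phi(\zeta n)$, contributing $O_f(\zeta n^{f-2})$. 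All other subcases reduce to the corresponding ones in the first assertion with $v,w$ pinned, each contributing $O_f(\zeta n^{f-2}+ C n^{f-3})$, so the total is at most $\epsilon n^{f-2}$.
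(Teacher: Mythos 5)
Your proof is correct and follows essentially the same approach as the paper's: you decompose non-rainbow copies by whether the two repeated-colour edges form a $P_2$ or a disjoint pair, bound the former via local $\zeta n$-boundedness and the latter via global $Cn$-boundedness, and isolate the monochromatic-$P_2$-closing-a-triangle-with-$vw$ subcase as the one that requires $vw \notin \Ir_G^\phi(\zeta n)$ or triangle-freeness. The paper organizes its case analysis slightly differently (by $|U \cap V(e\cup e')|$ for the pair $(e,e')$ of same-colour edges in $G$, with $U$ the pinned vertex set), but the underlying decomposition and the role of each boundedness hypothesis are identical to yours.
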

\COMMENT{
\begin{proof}
Let $U=\{v\}$ or $U=\{v,w\}$.
We now count all non-rainbow copies of $F$ in $G$ containing $v$ (in the case when $U=\{v\}$) or $vw$ (in the case when $U=\{v,w\}$). We let $\cF$ denote the set of all such copies.

Let $c_1$ be the number of all such copies of $F$ which contain two disjoint edges $e$ and $e'$ with $\phi(e) = \phi(e')$. To count all such copies, we consider the following cases.
\begin{itemize}
\item If $U\cap V(e\cup e') =\emptyset$, then there are at most $\binom{n}{2}$ ways to choose $e$ and $Cn$ ways to choose $e'$ with $\phi(e')=\phi(e)$, and $f! n^{f-|U|-4}$ ways to choose the remaining vertices, thus there are at most $Cf! n^{f-|U|-1}$ such copies.

\item If $|U\cap V(e\cup e')| = 1$, assume that $|U\cap V(e)|=1$. Then there are $|U|$ ways to choose the unique vertex in $U\cap V(e)$ and there are at most $n$ ways to choose $e$ incident to the chosen vertex. There are $Cn$ ways to choose $e'$ with $\phi(e')=\phi(e)$
and $f! n^{f-|U|-3}$ ways to choose the remaining vertices, thus there are at most $2Cf! n^{f-|U|-1}$ such copies.

\item If $|U\cap V(e\cup e')| =2$ and $vw\notin \{e,e'\}$.
Then there are at most $2n$ ways to choose $e$ and $\zeta n$ ways to choose $e'$ with $\phi(e')=\phi(e)$ (as $\phi$ is locally $\zeta n$-bounded, and $e'$ contains the unique vertex in $U\setminus V(e)$) and  $f! n^{f-|U|-2}$ ways to choose the remaining vertices, thus there are at most $2\zeta f!  n^{f-|U|}$ such copies.

\item If $e=vw$ and $U=\{v,w\}$, then there are at most $Cn$ ways to choose $e'$ with $\phi(e)=\phi(e')$ and at most $f! n^{f-|U|-2}$ ways to choose the remaining vertices, thus there are at most $C f! n^{f-|U|-1}$ such copies.
\end{itemize}
Hence, we have
$$c_1 \leq 4Cf!  n^{f-|U|-1} + 2\zeta f!  n^{f-|U|} \leq 3\zeta f!  n^{f-|U|}.$$

Let $c_2$ be the number of copies of $F$ in $\cF$ which contain two edges $e\neq e'$ such that $\phi(e)=\phi(e')$ and $e,e'$ share a vertex, say $x$. To count all such copies, we consider the following cases.

\begin{itemize}
\item If $U\cap V(e\cup e') =\emptyset$, then there are at most $\binom{n}{2}$ ways to choose $e$ and $2$ ways to choose $x \in e$ and
$\zeta n$ ways to choose $e'$ containing $x$ with $\phi(e')=\phi(e)$, and $f! n^{f-|U|-3}$ ways to choose the remaining vertices, thus there are at most $2\zeta f! n^{f-|U|}$ such copies.

\item If $U\cap V(e\cup e') = \{x\}$, then there are $|U|$ ways to choose $x$ and at most $n$ ways to choose $e$ and $\zeta n$ ways to choose $e'$ with $\phi(e)=\phi(e')$, and $f! n^{f-|U|-2}$ ways to choose the remaining vertices. Hence there are at most $2\zeta f! n^{f-|U|}$ such copies.

\item If $|U\cap V(e\cup e')|=1$ but $x \notin U$, assume that $U\cap V(e)=1$ and write $y$ for the unique vertex in $U\cap V(e)$.
 Then there are $|U|$ ways to choose $y$ and at most $n$ ways to choose $x$ (and thus $e$), and
at most $\zeta n$ ways to choose $e'$ incident to $x$ with $\phi(e')=\phi(e)$, and $f! n^{f-|U|-2}$ ways to choose the remaining vertices. Hence there are at most $2\zeta f! n^{f-|U|}$ such copies.

\item If $|U\cap V(e\cup e')|=2$ with $x \in U$, then $U=\{v,w\}$ and we may assume that $vw = e$. There are $2$ ways to choose $x\in \{v,w\}$ and at most $\zeta n$ ways to choose $e'$ with $\phi(e')=\phi(vw)$, and at most $f! n^{f-|U|-1}$ ways to choose the remaining vertices. Hence there are at most $2\zeta f! n^{f-|U|}$ such copies.

\item Suppose finally that $|U\cap V(e\cup e')|=2$ with $x\notin U$. Note that this case only occurs if $F$ is not triangle-free, and thus we have $vw\notin \Ir^{\phi}_G(\zeta n)$, there are at most $\zeta n$ ways to choose $e$ and $e'$ with $\phi(e)=\phi(e')$ such that there exists a non-rainbow copy of $F$ in $G$ containing $vw,e$ and $e'$. There are at most $f! n^{f-|U|-1}$ ways to choose the remaining vertices, thus there are at most $\zeta f! n^{f-|U|}$ such copies of $F$.
\end{itemize}
Hence, in total, we have
$$ c_2 \leq 9 \zeta f! n^{f-|U|}.$$
Therefore
$|\cF|\leq c_1+ c_2 \leq \epsilon n^{f-|U|}.$
This finishes the proof.
\end{proof}}

\begin{proof}[Proof of Lemma~\ref{counting partite}]
We fix $u \in V(G)$.
For given $G$ with partition $\cV$, we duplicate each vertex $x \in V(G)$ into $x_{1},\dots, x_{f}$ and let $V_{i,1},\dots, V_{i,f}$ be defined by $V_{i,f'}:= \{x_{f'}: x\in V_i\}$.
Let $G'$ be the graph on $\bigcup_{(i,f')\in [r]\times [f]} V_{i,f'}$ such that, for each $x_{f'}\in V(G')$, the neighbourhood $N_{G'}(x_{f'})$ is exactly the set of duplicates of $N_{G}(x)$. Let $\cW:=\{V_{1,1},\dots, V_{r,f}\}$ and $\cW':=\{ \bigcup_{f'\in [f]} V_{1,f'},\dots, \bigcup_{f'\in [f]} V_{r,f'}\}$.
For each edge $xx' \in E(G)$ and $f'\neq f''\in [f]$, let
$\phi'(x_{f'} x'_{f''}) = \phi(xx')$.
It is easy to see that \ref{lem counting partite 2}--\ref{lem counting partite 3} imply \ref{lem counting 1}--\ref{lem counting 3} for $G'$ with $\zeta, \epsilon/4, u_1, v_1w_1$ playing the roles of $\zeta, \epsilon, u, vw$. Thus we can apply Lemma~\ref{counting} and at the same time we apply Lemma~\ref{non-rainbow counting} (with $\zeta, \epsilon^2, u_1, v_1w_1$ playing the roles of $\zeta,\epsilon, u, vw$) to estimate the number of non-rainbow copies of $F$.
By subtracting the latter from the former, we conclude that the number of rainbow copies of $F$ in $G'$ containing $u_1$ and respecting both $(\cX,\cW')$ and $(V(F),\cW)$ is
$$(1\pm \epsilon/2) \frac{r! (f!)^r d^h n^{fr-1}}{|\Aut_{\cX}(F)| }$$
and
we also obtain that the number of rainbow copies of $F$ in $G'$ containing $v_1w_1$ respecting both $(\cX,\cW')$ and $(V(F),\cW)$ is
$$(1\pm \epsilon/2) \frac{ h r! (f!)^r d^{h-1} n^{fr-2}}{
\binom{r}{2}f^2 |\Aut_{\cX}(F)|}.$$

Note that each such copy of $F$ is either degenerate (in the sense that it contains two duplicates of the same vertex) or corresponds to rainbow copy of $F$ in $G$.
It is easy to see that there are at most $|V(F)|^3 f (fn)^{fr-2}$\COMMENT{need to choose preimage of $u_1$, and then two vertices of $F$ which map to duplicates.} degenerate copies of $F$ containing $u_1$.
Note that $(f!)^{r-1} (f-1)!$ non-degenerate copies of $F$ in $G'$ containing $u_1$ correspond to the same copy of $F$ in $G$ (which then contains $u$). \COMMENT{All vertices except $u_1$ can be mapped to a different duplicate of the same vertex.}
Thus
\begin{align*}
r_{G,\cX,\cV}(F,u) &= \frac{1}{(f!)^{r-1} (f-1)!} |R_{G',\cX,\cW'}(F,u_1)\cap R_{G',V(F),\cW}(F,u_1)| \pm f(fr)^3 (f n)^{fr-2} \\
& = (1\pm \epsilon) \frac{r! f d^h n^{fr-1}}{|\Aut_{\cX}(F)| }.
\end{align*}
A similar argument works for $r_{G,\cX, \cV}(F, vw)$.
\COMMENT{
Similarly, there are at most $|V(F)|^4 f(fn)^{fr-3}$ degenerate copies of $F$ containing $v_1w_1$ and exactly $(f!)^{r-2}(f-1)!^2$ non-degenerate copies of $F$ in $G'$ containing $v_1w_1$ correspond to into the same copy of $F$ in $G$ (which then contains $vw$). Thus
\begin{align*}
r_{G,\cX,\cV}(F,vw) &= \frac{1}{(f!)^{r-2} (f-1)!^2} |R_{G',\cX,\cW'}(F,v_1w_1)\cap R_{G',V(F),\cW}(F,v_1w_1)| \pm f (fr)^4 n^{fr-3} \\
&= (1\pm \epsilon) \frac{ h r!  d^{h-1} n^{fr-2}}{ \binom{r}{2}|\Aut_{\cX}(F)| }.
\end{align*}}
\end{proof}

\begin{proof}[Proof of Lemma~\ref{counting quasirandom}]
For a given quasirandom graph $G$, we duplicate each $x\in V(G)$ into $x_1,\dots, x_f$ and let $V_1,\dots, V_f$ be defined by $V_{i}:= \{x_{i}: x\in V\}$.
Let $\cV=\{V_1,\dots,V_f\}$.
Let $H$ be the graph with vertex set $\bigcup_{i\in [f]} V_i$ such that $x_i y_j \in E(H)$ if and only if $xy\in V(G)$.
For each edge $xy\in E(G)$ and $i\neq j\in [f]$, let
$\phi'(x_{i} y_{j}) = \phi(xy)$.
By using Theorem~\ref{thm: almost quasirandom} and \ref{lem counting quasirandom 1}, it is easy to see that \ref{lem counting partite 2}--\ref{lem counting partite 3} hold for $H$ and $\cV$ with $\zeta^{1/6}, f, 1, u_1, v_1w_2$ playing the roles of $\zeta, r, f, u, vw$, respectively. By applying Lemma~\ref{counting partite} with these parameters and with $\epsilon/5$ playing the role of $\epsilon$, and by using Lemma~\ref{non-rainbow counting} to estimate the number of non-rainbow copies of $F$, we conclude that
$$
r_{H,V(F),\cV}(F,u_1)=(1\pm \epsilon/4) \frac{f!d^h n^{f-1}}{|\Aut(F)| } \enspace \text{and} \enspace
r_{H,V(F),\cV}(F,v_1w_2)=(1\pm \epsilon/4) \frac{ h f!  d^{h-1} n^{f-2}}{ \binom{f}{2}|\Aut(F)| }.$$

Again, similarly as in the proof of Lemma~\ref{counting partite},  the number of degenerate copies of $F$ in $H$ is negligible in both cases.
Moreover, $(f-1)!$ distinct non-degenerate copies of $F$ in $H$ containing $u_1$ correspond to the same copy of $F$ in $G$, and $(f-2)!$ distinct non-degenerate copies of $F$ in $H$ containing $v_1w_2$ correspond to the same copy of $F$ in $G$.\COMMENT{Note that $v_iw_i$ are non-edge in $H$}
Thus we have
$$
r_{G}(F,u)=(1\pm \epsilon/3) \frac{f d^h n^{f-1}}{|\Aut(F)| } \enspace \text{and} \enspace
r_{G}(F,vw)=(1\pm \epsilon/3) \frac{ 2h   d^{h-1} n^{f-2}}{|\Aut(F)| }.$$

\end{proof}

\end{document}